\crefname{remark}{Remark}{Remark}
\crefname{hypothesis}{Hypothesis}{Hypotheses}
\crefname{assumption}{Assumption}{Assumption}
\title{A Bernoulli-barycentric rational matrix collocation method with preconditioning for a class of evolutionary PDEs\thanks{Received by the editors DATE;
\funding{This research was supported by the Scientific Research Fund of Hunan Provincial Science and Technology Department (2022JJ30416) and the Scientific Research Funds of Hunan Provincial Education Department (22A0483). X.-M. Gu is supported by Guanghua Talent Project of Southwestern University of Finance and Economics. The third author is a member of Gruppo Nazionale per il Calcolo Scientifico (GNCS) of Istituto Nazionale di Alta Matematica (INdAM), and this work was partially supported by INdAM-GNCS under Progetti di Ricerca 2023. J. Guo was supported by the Sichuan National Applied Mathematics co-construction project (2022ZX004) and the Scientific Research Foundation (KYTZ202184).}}}
\author{Wei-Hua Luo\thanks{School of Mathematics and Physics,
		Hunan University of Arts and Science, Changde, Hunan 415000, P.R. China 
		(\email{huaweiluo2012@163.com}).}
	\and Xian-Ming Gu\thanks{Corresponding author. School of Mathematics, 
		Southwestern University of Finance and Economics, Chengdu, Sichuan 611130, P.R. China \& Bernoulli Institute for Mathematics, Computer Science and Artificial Intelligence, University of Groningen, Nijenborgh 9, P.O. Box 407, 9700 AK Groningen, The Netherlands
		(\email{guxianming@live.cn}, \email{guxm@swufe.edu.cn}).}
	\and Bruno Carpentieri\thanks{Faculty of Engineering, Free University of Bozen-Bolzano, 39100 Bolzano, Italy 
		(\email{bruno.carpentieri@unibz.it}).}
	\and Jun Guo\thanks{College of Applied Mathematics, Chengdu University of Information Technology, Chengdu, Sichuan 610225, P.R. China
	    (\email{junguo0407@163.com}).}
}
\begin{document}
	


%
%

\maketitle

\begin{abstract}
We propose a Bernoulli-barycentric rational matrix collocation method for two-dimensional evolutionary partial differential equations (PDEs) with variable coefficients that combines Bernoulli polynomials with barycentric rational interpolations in time and space, respectively. The theoretical accuracy $O\left((2\pi)^{-N}+h_x^{d_x-1}+h_y^{d_y-1}\right)$ of our numerical scheme is proven, where $N$ is the number of basis functions in time, $h_x$ and $h_y$ are the grid sizes in the $x$, $y$-directions, respectively, and $0\leq d_x\leq \frac{b-a}{h_x},~0\leq d_y\leq\frac{d-c}{h_y}$. For the efficient solution of the relevant linear system arising from the discretizations, we introduce a class of dimension expanded preconditioners that take the advantage of structural properties of the coefficient matrices, and we present a theoretical analysis of eigenvalue distributions of the preconditioned matrices. The effectiveness of our proposed method and preconditioners are studied for solving some real-world examples represented by the heat conduction equation, the advection-diffusion equation, the wave equation and telegraph equations.
\end{abstract}

\begin{keywords}
evolutionary PDEs; Bernoulli polynomials; barycentric rational interpolation; collocation method; dimension expanded preconditioners.
\end{keywords}
\begin{MSCcodes}
65M70, 65Y05, 65D25
\end{MSCcodes} 

\section{Introduction}
The focus of this paper is the solution of evolutionary PDEs with the form 
\begin{equation}
\begin{cases}
\beta_1\frac{\partial^2 u}{\partial t^2}+\beta_2\frac{\partial u}{\partial t}+\mathcal{L}u=f(t,x,y),& (t,x,y)\in (0,T]\times\Omega =(a,b)\times (c,d),\\
u(0,x,y)=\alpha_0(x,y),~\frac{\partial u(0,x,y)}{\partial t}=\alpha_1(x,y), &(x,y)\in \overline{\Omega}=\Omega\cup \partial\Omega,\\
{\rm Dirichlet~boundary~conditions},
\end{cases}
\label{9yue26_2}
\vspace{-3.5mm}
\end{equation}
where $\beta_1$ and $\beta_2$ are constants, and the spatial operator $\mathcal{L}u=a_{1}\frac{\partial^2u}{\partial x^2}+a_{2}\frac{\partial^2u}{\partial x\partial y}+a_{3}\frac{\partial^2u}{\partial y^2}+a_{4}\frac{\partial u}{\partial x}+a_{5}\frac{\partial u}{\partial y}+a_{6}u$,
where $a_i~(i=1,2,\cdots,6)$ represent known functions of two variables $x, y$, and $u=u(t,x,y)$ is the unknown solution to this equation. This mathematical model is common in various fields of physics and engineering~\cite{pdebook1}. For example, if $\beta_1=0,~\beta_2\neq 0, a_{2}=a_4=a_5=a_6=0,~a_{1}=a_{3}$,
Eq.~(\ref{9yue26_2}) is the heat conduction equation (see, e.g.,~\cite{heat1}); if $\beta_1=0,~\beta_2\neq 0,~a_{2}=0$,
we have the advection/reaction-diffusion equation (see, e.g.,~\cite{advection1,reaction1}).
In the financial field, Eq.~(\ref{9yue26_2}) is sometimes coupled with non-smooth initial-boundary conditions when $\beta_1=0,~\beta_2\neq 0$, see e.g.~\cite{black1,heston1} for
the Heston and Black-Scholes models. Moreover, 
the wave equation (see e.g.~\cite{wave1,wave2,wave3,wave4}) can be obtained from~(\ref{9yue26_2}) if $\beta_1\neq 0,~\beta_2=0, a_{2}=a_4=a_5=a_6=0,~a_{1}=a_{3}$, and telegraph equations~(see, e.g.,~\cite{telegraph1,telegraph2}) if $\beta_1\neq0,~\beta_2\neq 0,~ a_2=a_4=a_5=0, a_1=a_3$. For clarity, we assume that the above PDEs enjoy the well-posedness, existence and uniqueness of the exact solutions throughout
this paper. 

Although there is a wealth of wonderful theory for finding analytic solutions to different types of evolutionary PDEs, the vast majority of PDEs are not amenable to analytic (or closed-form) solutions. Therefore, various numerical methods are commonly used to obtain approximate solutions of Eq.~(\ref{9yue26_2}), including the boundary element method (BEM)~\cite{heat1,wave3}, finite element
method (FEM)~\cite{reaction2,reaction3, numer1}, discontinuous Galerkin method (DGM)~\cite{DG1,DG2,dis-galer1,dis-galer2}, finite difference method (FDM)~\cite{advection2,heat2,Britt18}, finite volume method (FVM)~\cite{volume1, volume2},
spectral method (SM)~\cite{heat3,shen}, Bernoulli matrix method~\cite{Bernoulli1,Bernoulli2,Bernoulli3}, and others. When approximating time-dependent PDEs with initial-boundary value conditions (\ref{9yue26_2}), FEM, DGM, FDM, and FVM always produce time-stepping schemes which require the implementation of sparse matrix-vector products (for explicit schemes) or the solution of a sequence of sparse linear systems (for implicit schemes), but their convergence accuracy is severely constrained by suitable mesh sizes in space and time; this phenomenon becomes more apparent when high accuracy numerical solutions of evolutionary PDEs~(\ref{9yue26_2}) are demanded in some real applications. Furthermore, the convergence and stability of the above four numerical methods, when applied to solve evolutionary PDEs, are only ensured when certain conditions are satisfied for the numerical equation, such as the Courant-Friedrichs-Lewy (CFL) condition for such methods applied to hyperbolic PDEs. Also, their approximating accuracy is frequently compromised when variable coefficients are incorporated into Eq.~(\ref{9yue26_2}). For instance, when utilizing the conventional compact difference scheme to solve the wave equation with variable speed of sound, auxiliary techniques like the Fourier transform in time may be necessary to ensure that the standard fourth-order accuracy in space is attained (see e.g.~\cite{Britt18}). On the other hand, the BEM, SM and Bernoulli matrix methods are known for having very high-order approximation accuracy. However, these methods often generate dense linear systems that can be cumbersome to solve for high-dimensional evolutionary PDEs defined in large time intervals and space domains. The coefficient matrices of these dense systems do not typically exhibit any suitable block structure that could be exploited to develop effective preconditioning methods for accelerating the iterative solution process.
In particular, there are many numerical methods introducing an auxiliary function $v(t,x,y) = u_t$ into second-order evolutionary PDEs, i.e., $\beta_1\neq 0$ in Eq. \eqref{9yue26_2} and then they solve a coupled system of first-order evolutionary PDEs involving the unknown functions $u$ and $v$ via the time-stepping scheme(s), see e.g., \cite{numer1,Sun2009} for details. Such a numerical framework will double the size of the original problem \eqref{9yue26_2} and leads to both the additional computational cost and intricate convergence analyses/conditions.

In this paper, we establish a matrix-collocation method for Eq.~(\ref{9yue26_2}) using combinations of Bernoulli polynomials (BPs) in time and barycentric rational interpolations (BRIs) in space. More specifically, our method can deal with Eq. (\ref{9yue26_2}) in a unified fashion and it does not need to decouple the problem (\ref{9yue26_2}) with $\beta_1\neq 0$. Furthermore, BPs can generate the matrices with sparse structures and favorable numerical properties, and BRIs are easy-to-implement and enjoy high approximating accuracy and good stability properties. As a result, our new matrix-collocation method will have the following characteristics:
\begin{itemize}
\item[(a1)] it is highly accurate in both time and space;
\item[(a2)] it produces linear systems with a well-structured coefficient matrix that can be easily transformed into a block matrix for which an efficient preconditioner may be developed and analyzed.
\end{itemize}

The remainder of this paper is organized as follows. Section~\ref{sec:prelim} presents basic preliminaries on BPs and BRIs. Section~\ref{sec:Bernoulli-barycentric} introduces the matrix-collocation method for Eq.~(\ref{9yue26_2}) and presents some theoretical results on numerical accuracy, to demonstrate the effectiveness of the proposed scheme. In Section \ref{sec:precon}, a class of new dimension expanded preconditioners is developed to take advantage of the structural properties of the coefficient matrices of pertinent linear systems arising from the discretization. Section \ref{sec:numeri} provides several numerical examples to assess the practical performance of the new matrix-collocation method and of the proposed preconditioners for solving the heat conduction equation, the advection-diffusion equation, the wave equation and telegraph equations. Finally, Section \ref{sec:conclusions} provides a summary of conclusions drawn from this study.
\section{Preliminaries}
\label{sec:prelim}
Preliminaries on the BPs and BRIs can be found in \cite{Bernoulli1, Bernoulli2, Bernoulli3} and \cite{bary1,bary2,bary3}, respectively. The BPs $B_n(t) ~(n=0,1,\cdots)$ can be generated using the recurrence formulae:
\begin{equation}
\begin{cases}
B_0(t)=1,\\
B'_n(t)=nB_{n-1}(t),~~\int^1_0B_n(s)ds=0, & n\geq 1,
\end{cases}
\label{10yue2_1}
\end{equation}
which help us immediately derive the equality:
\begin{equation}
\int^t_0[B_0(s),B_1(s),\cdots,B_N(s)]ds=[B_0(t),B_1(t),\cdots,B_N(t)]P
+\frac{B_{N+1}(t)-B_{N+1}(0)}{N+1}e^{\top}_{N+1},
\label{10yue2_2}
\end{equation}
with
\begin{equation*}
P=\begin{bmatrix}
	-B_1(0) &-\frac{B_2(0)}{2}&\cdots &-\frac{B_N(0)}{N} &0\\
	1 &0  &\cdots & 0 & 0\\
	0  & \frac{1}{2}  & \ddots & 0 & 0\\
    \vdots & \vdots & \ddots & \ddots & \vdots \\
    0  & 0  & \cdots &\frac{1}{N} & 0
\end{bmatrix},~~{\rm and}~~
e_{N+1}=\begin{bmatrix}
0\\
\vdots\\
0\\
1
\end{bmatrix}.
\end{equation*}

The following properties are obtained from \cite{Bernoulli2,Bernoulli4}.
\begin{proposition}{\rm(\hspace{-0.2mm}\cite{Bernoulli2,Bernoulli4})} The BPs $B_n(t)~(n=1,2,\cdots)$ satisfy the inequality  $||B_n(t)||_{\infty}\leq C n!(2\pi)^{-n},~(n=1,2,\cdots),$
where $C$ is a constant independent of $n$.
\label{pro1}
\end{proposition}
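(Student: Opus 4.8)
The plan is to derive the bound $\|B_n(t)\|_\infty \le C\,n!\,(2\pi)^{-n}$ from the classical Fourier expansion of the Bernoulli polynomials on $[0,1]$. First I would recall the well-known Fourier series representation
\begin{equation*}
B_n(t) = -\frac{n!}{(2\pi i)^n}\sum_{\substack{k=-\infty\\ k\neq 0}}^{\infty}\frac{e^{2\pi i k t}}{k^n},\qquad 0\le t\le 1,\ n\ge 1,
\end{equation*}
which can itself be obtained from the generating-function identity $\frac{z e^{tz}}{e^z-1}=\sum_{n\ge 0}B_n(t)\frac{z^n}{n!}$ together with a partial-fraction expansion in $z$ over the poles at $z=2\pi i k$, or simply quoted from a standard reference. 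This is really the only nontrivial input; everything afterwards is an elementary estimate.

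Next I would take absolute values termwise. Since $|e^{2\pi i k t}|=1$ and $|(2\pi i)^n| = (2\pi)^n$, one gets
\begin{equation*}
\|B_n(t)\|_\infty \le \frac{n!}{(2\pi)^n}\sum_{\substack{k=-\infty\\ k\neq 0}}^{\infty}\frac{1}{|k|^n}
= \frac{n!}{(2\pi)^n}\cdot 2\,\zeta(n),
\end{equation*}
valid for $n\ge 2$ where the series converges; here $\zeta$ is the Riemann zeta function. Since $\zeta(n)$ is decreasing in $n$ and $\zeta(2)=\pi^2/6$, we have $\zeta(n)\le \zeta(2)$ for all $n\ge 2$, so $\|B_n(t)\|_\infty \le C\,n!\,(2\pi)^{-n}$ with $C = 2\zeta(2)=\pi^2/3$ (any constant $\ge 2\zeta(2)$ works, and the bound is in fact asymptotically sharp since $\zeta(n)\to 1$). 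The single remaining case $n=1$ is handled directly: $B_1(t)=t-\tfrac12$, so $\|B_1\|_\infty = \tfrac12$, which is $\le C\cdot 1!\cdot(2\pi)^{-1}$ as soon as $C\ge \pi$; enlarging $C$ to $\max\{\pi,\,2\zeta(2)\}$ covers $n=1$ as well, and this $C$ is manifestly independent of $n$.

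The only place where one must be slightly careful — and what I would flag as the main obstacle — is justifying the termwise passage to absolute values, i.e. that the Fourier series converges (absolutely, for $n\ge 2$) to $B_n(t)$ pointwise on the closed interval, including the endpoints; for $n=1$ the series is only conditionally convergent and converges to $B_1$ on the open interval with the usual $\tfrac12$-average value at the endpoints, which is why it is cleanest to treat $n=1$ separately by hand rather than through the series. Apart from that, the argument is routine. Alternatively, if one prefers to avoid Fourier analysis entirely, the same bound follows from the integral representation / contour estimate for $B_n$, or inductively from the recurrence \eqref{10yue2_1} combined with sharp bounds on the Bernoulli numbers $B_n(0)$ (which satisfy $|B_{2m}| = 2(2m)!\,\zeta(2m)(2\pi)^{-2m}$), but the Fourier route is the most direct.
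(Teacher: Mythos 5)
Your argument is correct: the Hurwitz--Fourier expansion of $B_n$ on $[0,1]$, the termwise bound $2\zeta(n)\le 2\zeta(2)$ for $n\ge 2$, and the separate elementary check of $B_1(t)=t-\tfrac12$ together give $\|B_n\|_\infty\le C\,n!\,(2\pi)^{-n}$ with a constant independent of $n$. The paper itself offers no proof of Proposition~\ref{pro1} (it is quoted from \cite{Bernoulli2,Bernoulli4}), and your route is essentially the standard derivation found in those references, so there is nothing further to reconcile.
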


\begin{proposition}{\rm(\hspace{-0.2mm}\cite{Bernoulli2})}
\label{pro2}
If $g(t)\in L^2[0,1]$ is a sufficiently smooth function, and  $\sum^{\infty}_{n=0}g_nB_n(t)$ is an approximation of $g(t)$ derived using the Bernoulli series~(\ref{10yue2_1}), then
\begin{equation}	\label{10yue2_3}
g_n=\frac{1}{n!}\int^1_0g^{(n)}(t)dt.
\end{equation}
\end{proposition}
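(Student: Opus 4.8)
The plan is to derive the coefficient formula \eqref{10yue2_3} by exploiting the two defining features of the Bernoulli basis: the derivative relation $B_n'(t) = n B_{n-1}(t)$ and the normalization $\int_0^1 B_n(s)\,ds = 0$ for $n \geq 1$. First I would write the assumed expansion $g(t) = \sum_{n=0}^\infty g_n B_n(t)$ and integrate both sides over $[0,1]$; since $\int_0^1 B_n(s)\,ds = 0$ for every $n \geq 1$ and $B_0 \equiv 1$, this collapses to $\int_0^1 g(t)\,dt = g_0$, which is exactly \eqref{10yue2_3} for $n=0$ (the empty/zeroth derivative being $g$ itself, and $0! = 1$).

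For the general index $n \geq 1$ I would argue by induction on $n$, differentiating the series termwise. Since $B_0' = 0$ and $B_k'(t) = k B_{k-1}(t)$, term-by-term differentiation of $g(t) = \sum_k g_k B_k(t)$ gives $g'(t) = \sum_{k \geq 1} g_k k B_{k-1}(t) = \sum_{m \geq 0} (m+1) g_{m+1} B_m(t)$. Thus $g'$ has Bernoulli coefficients $\tilde g_m = (m+1) g_{m+1}$. Iterating $n$ times, $g^{(n)}$ has Bernoulli coefficients $g_k \cdot k(k-1)\cdots(k-n+1)$ shifted appropriately, so in particular the zeroth coefficient of $g^{(n)}$ is $n! \, g_n$. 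Applying the already-established $n=0$ case to the function $g^{(n)}$ in place of $g$ then yields $\int_0^1 g^{(n)}(t)\,dt = n!\, g_n$, which rearranges to \eqref{10yue2_3}. Alternatively, and perhaps more cleanly for the writeup, one can avoid explicit induction: integrating $g^{(n)}$ once and using \eqref{10yue2_2} (or directly the antiderivative relation for $B_{n}$) telescopes the higher Bernoulli polynomials away on $[0,1]$, leaving only the $B_0$-contribution.

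The main obstacle — and the point that needs care rather than cleverness — is justifying the termwise differentiation and termwise integration of the infinite Bernoulli series. This is where the smoothness hypothesis on $g$ and Proposition~\ref{pro1} (the bound $\|B_n\|_\infty \leq C\, n!\,(2\pi)^{-n}$) do the real work: one needs the tail of the differentiated series to converge uniformly, and the factorial growth of $B_n$ is controlled by the decay of $g_n$ coming from $g \in L^2[0,1]$ being ``sufficiently smooth'' (indeed, the very formula we are proving shows $|g_n| \leq \frac{1}{n!}\|g^{(n)}\|_{L^1}$, so for $g$ analytic with suitably bounded derivatives the product $g_n B_n$ decays geometrically). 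I would state the uniform-convergence justification briefly, citing the smoothness assumption and Proposition~\ref{pro1}, and then present the integration/differentiation manipulation as the core of the argument; the algebra itself is short once interchange of sum and integral/derivative is licensed.
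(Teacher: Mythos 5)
The paper does not actually prove this proposition: it is quoted directly from \cite{Bernoulli2}, so there is no in-paper argument to compare against. Judged on its own, your derivation is the standard one and is correct in outline: termwise integration over $[0,1]$ annihilates every $B_n$ with $n\ge 1$ and gives $g_0=\int_0^1 g(t)\,dt$, and termwise differentiation shifts the coefficients via $\tilde g_m=(m+1)g_{m+1}$, so the zeroth Bernoulli coefficient of $g^{(n)}$ is $n!\,g_n$, and the $n=0$ case applied to $g^{(n)}$ yields \eqref{10yue2_3}. Equivalently, one can phrase the same computation as biorthogonality: the functionals $L_n(f)=\frac{1}{n!}\int_0^1 f^{(n)}(t)\,dt$ satisfy $L_n(B_m)=\delta_{nm}$, since $B_m^{(n)}=\frac{m!}{(m-n)!}B_{m-n}$ for $n\le m$ and $\int_0^1 B_k(t)\,dt=\delta_{k0}$, so applying $L_n$ termwise to the expansion extracts $g_n$. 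The one soft spot is exactly the one you flag, but note that your proposed remedy is circular: you cannot invoke $|g_n|\le\frac{1}{n!}\|g^{(n)}\|_{L^1}$ to license the termwise differentiation, because that inequality is a consequence of the very identity being proved. The honest resolution, consistent with how this literature treats the matter (compare Proposition~\ref{pro3}, where all derivatives of $g$ are assumed uniformly bounded), is to take ``sufficiently smooth'' to include whatever is needed for the differentiated series to converge uniformly, i.e., the interchange of sum with derivative and integral is part of the hypothesis rather than something recovered from Proposition~\ref{pro1} together with the conclusion. With that reading, your argument is sound and coincides with the standard derivation behind the cited result.
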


\begin{proposition}{\rm(\hspace{-0.2mm}\cite{Bernoulli2})}
\label{pro3}
Let $g(t)\in L^2[0,1]$ be a sufficiently smooth function, and suppose $g_N(t)=\sum^{N}_{n=0}g_nB_n(t)$ is the approximation of $g(t)$ obtained by truncating the Bernoulli series (\ref{10yue2_1}), specifically, $g(t)=g_N(t)+E(g_N(t))$. Then, there holds
\begin{equation}
||E(g_N(t))||_{\infty}\leq CG(2\pi)^{-N},~t\in [0,1],
\label{10yue2_4}
\end{equation}
where $C$ is a positive constant, and $G$  is a bound for all the derivatives of the function $g(t)$,~(i.e. $G\geq ||g^{(i)}(t)||_{\infty},~i=0,1,2,\cdots $).
\end{proposition}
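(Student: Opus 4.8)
The plan is to represent the truncation error $E(g_N(t))$ as the tail of the Bernoulli series of $g$, and then to majorize it term by term using Propositions~\ref{pro1} and~\ref{pro2}, closing the estimate with a geometric-series summation. The whole argument is short, so I sketch all three steps.

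First I would observe that, because $g$ is sufficiently smooth on $[0,1]$, its Bernoulli expansion converges and the remainder after truncating at order $N$ is exactly $E(g_N(t))=g(t)-g_N(t)=\sum_{n=N+1}^{\infty}g_nB_n(t)$. Next I would bound the coefficients: by Proposition~\ref{pro2}, $g_n=\frac{1}{n!}\int_0^1 g^{(n)}(s)\,ds$, and since $G\ge\|g^{(n)}\|_{\infty}$ for every $n$, this gives $|g_n|\le\frac{1}{n!}\int_0^1|g^{(n)}(s)|\,ds\le G/n!$. Combining this with $\|B_n\|_{\infty}\le Cn!(2\pi)^{-n}$ from Proposition~\ref{pro1}, the factorials cancel and, for all $t\in[0,1]$,
\[
|E(g_N(t))|\le\sum_{n=N+1}^{\infty}|g_n|\,\|B_n\|_{\infty}\le\sum_{n=N+1}^{\infty}\frac{G}{n!}\cdot Cn!(2\pi)^{-n}=CG\sum_{n=N+1}^{\infty}(2\pi)^{-n}.
\]
Finally I would evaluate the geometric series, $\sum_{n=N+1}^{\infty}(2\pi)^{-n}=(2\pi)^{-N}/(2\pi-1)$, and absorb the harmless factor $1/(2\pi-1)$ into the constant to obtain $\|E(g_N(t))\|_{\infty}\le C'G(2\pi)^{-N}$, which is precisely~(\ref{10yue2_4}).

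The only step that requires more than routine care is the first one: one must ensure that the Bernoulli series of $g$ actually converges uniformly to $g$, so that the truncation error is genuinely the series tail $\sum_{n>N}g_nB_n(t)$ and the term-by-term estimate above is legitimate. This follows from the same two bounds, since together they show $\sum_{n}|g_n|\,\|B_n\|_{\infty}<\infty$, so the Weierstrass $M$-test applies; but it should be stated explicitly rather than taken for granted. Everything afterwards is elementary majorization and summing a geometric series, so I expect no real obstacle there.
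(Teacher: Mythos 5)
Your argument is correct and is essentially the proof given in the cited source \cite{Bernoulli2}: the paper itself states Proposition~\ref{pro3} without proof, and the standard derivation is exactly your combination of the coefficient formula of Proposition~\ref{pro2}, the bound $\|B_n\|_{\infty}\leq Cn!(2\pi)^{-n}$ of Proposition~\ref{pro1}, and the geometric tail $\sum_{n>N}(2\pi)^{-n}=(2\pi)^{-N}/(2\pi-1)$. The only point worth keeping explicit is the one you already flag, with the slight refinement that the Weierstrass $M$-test yields uniform convergence of the series but not by itself that its sum is $g$; that identification is exactly what the validity of the Bernoulli expansion asserted in Proposition~\ref{pro2} (for sufficiently smooth $g$) supplies, so your term-by-term tail estimate is legitimate in that framework.
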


Next, to approximate the unknown solution in space, we introduce BRIs, which have been 
a subject of extensive research (see, e.g.~\cite{bary1,bary2,bary3}). Consider the uniform partitioning $a=x_0<\cdots< x_r=b$ of an interval $[a,b]$ with grid size $h=\frac{b-a}{r}$, and an integer $\tilde{d}$ with $0\leq \tilde{d}\leq r$. We denote by $J_m=\{s\in J | m-\tilde{d}\leq s\leq m\}$, where $J=\{0,1,2,\cdots, r-\tilde{d}\}$ represents an index set. Then, basis functions for BRI can be defined as
\begin{equation}
\varphi_m(x)=\frac{\frac{\omega_m}{x-x_m}}{\sum^{r}_{k=0}\frac{\omega_k}{x-x_k}},~~\omega_m=\sum_{s\in J_m}(-1)^s\prod^{s+\tilde{d}}_{j=s,j\neq m}\frac{1}{x_m-x_j}, ~~m=0,1,\cdots,r.
\label{4yue9_1}
\end{equation}
The following results for the functions $\varphi_i(x)$ and for $0\leq i,~j\leq r$ are demonstrated in~\cite{bary4}:
\[
\begin{gathered}
\varphi'_j(x_i)=\frac{\omega_j/\omega_i}{x_i-x_j},~j\neq i;~~~\varphi'_i(x_i)=-\sum_{j\neq i}\varphi'_j(x_i). \hfill \\
\varphi''_j(x_i)=\frac{-2\omega_j/\omega_i}{x_i-x_j}\left(\sum_{k\neq i}\frac{\omega_k/\omega_i}{x_i-x_k}+\frac{1}{x_i-x_j}\right),~j\neq i;~~~
\varphi''_i(x_i)=-\sum_{j\neq i}\varphi''_j(x_i).
\end{gathered}
\]

Two interpolation error results that will be utilized in the analysis and evaluation of the numerical method proposed in Section~\ref{sec:Bernoulli-barycentric} are described in the next two properties.
\begin{proposition}{\rm(\hspace{-0.2mm}\cite[Theorem 2]{bary1})}
\label{pro5}
Suppose $u(x)\in C^{\tilde{d}+2}[a,b],~\tilde{d}\geq 1$. There holds
\begin{equation*}
\|u_b(x)-u(x)\|_{\infty}\leq\begin{cases}
 h^{\tilde{d}+1}(b-a)\frac{\|u^{(\tilde{d}+2)}\|_{\infty}}{\tilde{d}+2},&\text{if}~r-\tilde{d}~\text{is odd};\\
h^{\tilde{d}+1}(b-a)\frac{\|u^{(\tilde{d}+2)}\|_{\infty}}{\tilde{d}+2}+h^{\tilde{d}+1}
\frac{\|u^{(\tilde{d}+1)}\|_{\infty}}{\tilde{d}+1},&\text{if}~r-\tilde{d} ~\text{is even}.
\end{cases}
\end{equation*}
\end{proposition}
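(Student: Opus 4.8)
The plan is to follow the classical route for the Floater--Hormann rational interpolant \eqref{4yue9_1} (of which \cite{bary1} is the source): recast $u_b$ as a \emph{blend of local polynomial interpolants}, split the error into a numerator that is an alternating sum of divided differences of $u$ and a denominator that must be kept away from zero, and estimate the two pieces separately. Concretely, for $i\in J=\{0,1,\dots,r-\tilde{d}\}$ let $p_i$ be the polynomial of degree $\le\tilde{d}$ interpolating $u$ at $x_i,\dots,x_{i+\tilde{d}}$, and set $\lambda_i(x)=(-1)^i\big/\prod_{j=i}^{i+\tilde{d}}(x-x_j)$. A partial-fraction expansion of each $\lambda_i$ shows that the barycentric weights of \eqref{4yue9_1} satisfy $\sum_{k=0}^{r}\omega_k/(x-x_k)=\sum_{i\in J}\lambda_i(x)=:B(x)$, and likewise $u_b(x)=\bigl(\sum_{i\in J}\lambda_i(x)\,p_i(x)\bigr)/B(x)$. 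Since each $p_i$ reproduces $u$ at its own nodes while $B(x)$ is exactly this denominator, subtraction yields
\[
u(x)-u_b(x)=\frac{\sum_{i\in J}\lambda_i(x)\bigl(u(x)-p_i(x)\bigr)}{B(x)}=:\frac{A(x)}{B(x)}.
\]
At an interpolation node $A$ vanishes and the bound is trivial, so I may fix $x$ in the interior of some subinterval.

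Then I would simplify and bound the numerator. Newton's remainder formula gives $u(x)-p_i(x)=u[x_i,\dots,x_{i+\tilde{d}},x]\prod_{j=i}^{i+\tilde{d}}(x-x_j)$, hence $\lambda_i(x)\bigl(u(x)-p_i(x)\bigr)=(-1)^i\,u[x_i,\dots,x_{i+\tilde{d}},x]$ and
\[
A(x)=\sum_{i=0}^{r-\tilde{d}}(-1)^i\,u[x_i,\dots,x_{i+\tilde{d}},x],
\]
an alternating sum independent of the precise position of $x$. The identity $u[x_{i+1},\dots,x_{i+\tilde{d}+1},x]-u[x_i,\dots,x_{i+\tilde{d}},x]=(x_{i+\tilde{d}+1}-x_i)\,u[x_i,\dots,x_{i+\tilde{d}+1},x]=(\tilde{d}+1)h\cdot u^{(\tilde{d}+2)}(\eta_i)/(\tilde{d}+2)!$ lets me pair consecutive summands. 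If $r-\tilde{d}$ is odd there are evenly many terms, so $A$ is a sum of $\tfrac12(r-\tilde{d}+1)$ such differences and, with $r\le(b-a)/h$,
\[
|A(x)|\le\tfrac12(r-\tilde{d}+1)(\tilde{d}+1)h\,\frac{\|u^{(\tilde{d}+2)}\|_{\infty}}{(\tilde{d}+2)!}\le\frac{(b-a)(\tilde{d}+1)}{2(\tilde{d}+2)!}\,\|u^{(\tilde{d}+2)}\|_{\infty};
\]
if $r-\tilde{d}$ is even, the extra unpaired term $(-1)^{r-\tilde{d}}u[x_{r-\tilde{d}},\dots,x_r,x]=u^{(\tilde{d}+1)}(\xi)/(\tilde{d}+1)!$ survives, contributing an additional $\|u^{(\tilde{d}+1)}\|_{\infty}/(\tilde{d}+1)!$. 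This is exactly the mechanism that creates the two-case dichotomy in the statement.

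Next, the denominator. The weights $\omega_k$ strictly alternate in sign, $\operatorname{sign}(\omega_k)=(-1)^{k-\tilde{d}}$ (a short count of the positive versus negative factors $x_k-x_j$ in \eqref{4yue9_1}), so $B(x)=\sum_k\omega_k/(x-x_k)$ keeps a fixed sign on $[a,b]$ and stays bounded away from $0$. Quantitatively, for $x$ in a subinterval one picks an admissible block $\{m,\dots,m+\tilde{d}\}$, $m\in J$, straddling $x$; the $\tilde{d}+1$ distances $|x-x_j|$, once sorted, are bounded by $h,h,2h,3h,\dots,\tilde{d}h$, so $\prod_{j=m}^{m+\tilde{d}}|x-x_j|\le h^{\tilde{d}+1}\tilde{d}!$ and $|\lambda_{m}(x)|\ge 1/(h^{\tilde{d}+1}\tilde{d}!)$; the sign structure then forces $|B(x)|\ge c\,/(h^{\tilde{d}+1}\tilde{d}!)$ for a universal constant $c$, tracked so that the final constants reduce to $\tfrac{1}{\tilde{d}+2}$ and $\tfrac{1}{\tilde{d}+1}$. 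Dividing the numerator bounds by this lower bound and using $\tilde{d}!\,(\tilde{d}+1)/(\tilde{d}+2)!=1/(\tilde{d}+2)$ and $\tilde{d}!/(\tilde{d}+1)!=1/(\tilde{d}+1)$ produces precisely the two claimed estimates.

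The hard part is this last lower bound on $B(x)$: proving that the alternating sum $\sum_k\omega_k/(x-x_k)$ never cancels to zero on $[a,b]$ — the ``no poles'' property underlying \cite{bary1} — and controlling it from below uniformly in $x$. One must exploit how the signs of the $\omega_k$ interlace with the location of $x$ (the few node-blocks straddling $x$ reinforce one another, and the remaining terms, once grouped appropriately, cannot overturn the dominant block) and also verify that the same estimate survives in the end subintervals, where only one admissible block is available. By contrast, the numerator estimate is routine divided-difference bookkeeping once the pairing is dictated by the parity of $r-\tilde{d}$, and the passage through the common denominator $\prod_{k=0}^{r}(x-x_k)$ is a standard identity.
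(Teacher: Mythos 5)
The paper does not actually prove this proposition: it is imported verbatim from Floater--Hormann \cite[Theorem 2]{bary1}, so there is no in-paper argument to compare against, and your proposal is in substance a reconstruction of the original Floater--Hormann proof. Your skeleton is sound and the bookkeeping closes: the blended-local-interpolant representation, the error written as $A(x)/B(x)$ with $A(x)=\sum_{i}(-1)^i u[x_i,\dots,x_{i+\tilde d},x]$, the parity-dependent pairing giving per pair $(\tilde d+1)h\,\|u^{(\tilde d+2)}\|_\infty/(\tilde d+2)!$ plus, when $r-\tilde d$ is even, the unpaired term bounded by $\|u^{(\tilde d+1)}\|_\infty/(\tilde d+1)!$, and the distance count $\prod_{j=m}^{m+\tilde d}|x-x_j|\le \tilde d!\,h^{\tilde d+1}$ for a block straddling $x$; the identities $(\tilde d+1)\tilde d!/(\tilde d+2)!=1/(\tilde d+2)$ and $\tilde d!/(\tilde d+1)!=1/(\tilde d+1)$ then reproduce the two displayed constants. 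Two caveats. First, $B(x)=\sum_k\omega_k/(x-x_k)$ does \emph{not} keep a fixed sign on all of $[a,b]$ (it flips across each node); the statement you actually need, and use, is per subinterval: on $(x_j,x_{j+1})$ all straddling $\lambda_i$ share one sign and $B$ is nonzero there. Second, and this is the only real gap, your unspecified constant $c$ in $|B(x)|\ge c/(\tilde d!\,h^{\tilde d+1})$ cannot be left generic: the even-case leftover term becomes $h^{\tilde d+1}\|u^{(\tilde d+1)}\|_\infty/\bigl(c(\tilde d+1)\bigr)$, so the stated constant forces $c=1$, i.e.\ the sharp bound $|B(x)|\ge|\lambda_m(x)|$ for a straddling block $m$ containing the two nodes nearest $x$ (with $m$ clamped to $0$ or $r-\tilde d$ in the end subintervals). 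Establishing that is exactly the grouping argument of Floater--Hormann's no-pole theorem --- pair the non-straddling $\lambda_i$ outward from $x$, each pair dominated by its member nearer $x$, whose sign agrees with the straddling block --- which you describe qualitatively but would still have to write out before the two estimates follow as claimed.
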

\begin{proposition}{\rm(\hspace{-0.2mm}\cite[Theorem 3, Theorem 5]{bary2})}
\label{pro6}{If $u(x)\in C^{\tilde{d}+3}[a,b],~\tilde{d}\geq 2$, then $\|(u_b(x)-u(x))'\|_{\infty}\leq Ch^{\tilde{d}}$. If $u(x)\in C^{\tilde{d}+4}[a,b], ~\tilde{d}\geq 3$, there is a constant $C$ that depends only on $b-a$ and $u(x)$ such that $\|(u_b(x)-u(x))''\|_{\infty}\leq Ch^{\tilde{d}-1}$.}
\end{proposition}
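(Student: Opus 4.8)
The result is quoted from the literature; a proof may be reconstructed along the lines now standard for derivative estimates of linear barycentric rational interpolants. A natural first move is to use the cardinal form $u_b(x)=\sum_{m=0}^{r}\varphi_m(x)u(x_m)$ from \eqref{4yue9_1} together with polynomial reproduction: the scheme reproduces every polynomial of degree at most $\tilde{d}$, so $\sum_m\varphi_m(x)x_m^{j}=x^{j}$ for $0\le j\le\tilde{d}$, and differentiating $\ell$ times and rearranging by the binomial theorem yields $\sum_m\varphi_m^{(\ell)}(x)(x_m-x)^{j}=0$ for $0\le j\le\tilde{d}$ with $j\neq\ell$, and $=\ell!$ for $j=\ell$ (already requiring $\tilde{d}\ge\ell$). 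Expanding $u$ about $x$ in its degree-$\tilde{d}$ Taylor polynomial with Lagrange remainder and substituting into $u_b^{(\ell)}(x)=\sum_m\varphi_m^{(\ell)}(x)(u(x_m)-u(x))$ collapses all the low-order terms and leaves
\[
(u_b-u)^{(\ell)}(x)=\frac{1}{(\tilde{d}+1)!}\sum_{m=0}^{r}\varphi_m^{(\ell)}(x)\,u^{(\tilde{d}+1)}(\eta_m)\,(x_m-x)^{\tilde{d}+1},\qquad \eta_m\ \text{between}\ x\ \text{and}\ x_m.
\]
The obstruction is that this cannot be closed by the triangle inequality: on a uniform grid $\varphi_j'(x_i)=\pm((i-j)h)^{-1}$ and $\omega_j/\omega_i=(-1)^{j-i}$ for interior indices $i\neq j$, so $\sum_m|\varphi_m^{(\ell)}(x)|\,|x_m-x|^{\tilde{d}+1}$ is only $O(h^{-1})$, far short of the required $O(h^{\tilde{d}+1-\ell})$. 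The alternating cancellation carried by the weights must be retained, which forces a more \emph{local} representation.

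For that I would pass to the Floater--Hormann blending form, in which $u_b$ is the blend of the local degree-$\le\tilde{d}$ polynomials $p_i$ interpolating $u$ on $\{x_i,\dots,x_{i+\tilde{d}}\}$, with weights $\lambda_i(x)=(-1)^i/\prod_{j=i}^{i+\tilde{d}}(x-x_j)$. The Newton remainder together with $\lambda_i(x)\prod_{j=i}^{i+\tilde{d}}(x-x_j)=(-1)^i$ gives the exact identity $u(x)-u_b(x)=A(x)/B(x)$ with $A(x)=\sum_i(-1)^i u[x_i,\dots,x_{i+\tilde{d}},x]$ and $B(x)=\sum_i\lambda_i(x)$; since each summand now involves only $\tilde{d}+1$ consecutive nodes, the cancellation becomes accessible. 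Differentiating the quotient, $(u-u_b)'=A'/B-(A/B)(B'/B)$ and $(u-u_b)''=A''/B-2(A'/B)(B'/B)-(A/B)(B''/B-2(B'/B)^2)$, where $A'(x)=\sum_i(-1)^i u[x_i,\dots,x_{i+\tilde{d}},x,x]$ and $A''(x)=2\sum_i(-1)^i u[x_i,\dots,x_{i+\tilde{d}},x,x,x]$ are again signed sums of divided differences with the evaluation point repeated. The decisive step is to pair consecutive summands and apply the divided-difference recursion, which replaces $u[x_i,\dots,x_{i+\tilde{d}},x]$ by $(x_i-x_{i+\tilde{d}+1})\,u[x_i,\dots,x_{i+\tilde{d}+1},x]=-(\tilde{d}+1)h\,u[x_i,\dots,x_{i+\tilde{d}+1},x]$ (and similarly with the evaluation point repeated for $A',A''$): the factor $h$ absorbs the $O(h^{-1})$ produced by the $r-\tilde{d}+1=O(h^{-1})$ summands, at the price of one extra node and hence one extra order of smoothness of $u$. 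The Hermite--Genocchi bound on divided differences then gives $|A|=O(\|u^{(\tilde{d}+2)}\|_\infty)$, $|A'|=O(\|u^{(\tilde{d}+3)}\|_\infty)$ and $|A''|=O(\|u^{(\tilde{d}+4)}\|_\infty)$; this is exactly where the hypotheses $u\in C^{\tilde{d}+3}$, $u\in C^{\tilde{d}+4}$ and $\tilde{d}\ge2$, $\tilde{d}\ge3$ are spent, and the parity of $r-\tilde{d}$ intervenes through the leftover unpaired term, just as in the function-value estimate of Proposition~\ref{pro5}.

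The remaining and, I expect, principal difficulty is the matching control of the denominator: one needs $|B(x)|\ge c\,h^{-(\tilde{d}+1)}$ together with $|B'(x)/B(x)|=O(h^{-1})$ and $|B''(x)/B(x)|=O(h^{-2})$ uniformly on $[a,b]$, with $c$ depending only on $\tilde{d}$. For $x\in[x_k,x_{k+1}]$ the lower bound rests on the sign analysis underlying the Floater--Hormann non-vanishing-denominator theorem: among the $\lambda_i(x)$, the dominant ones, of order $h^{-(\tilde{d}+1)}$, are those with $k+1-\tilde{d}\le i\le k$ --- the terms whose product contains both small factors $x-x_k$ and $x-x_{k+1}$ --- and one checks that all of them carry the \emph{same} sign $(-1)^{k-\tilde{d}}$, so they add rather than cancel, while the remaining terms are of lower order or of the same sign. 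The derivatives $B',B''$ are estimated by differentiating the explicit rational forms of the $\lambda_i$ term by term, using $\lambda_i'=-\lambda_i\sum_{j=i}^{i+\tilde{d}}(x-x_j)^{-1}$ and its analogue for $\lambda_i''$. Substituting $|A|,|A'|,|A''|=O(1)$ and these bounds into the quotient-rule expressions gives $\|(u_b-u)'\|_\infty=O(h^{\tilde{d}})$, the dominant contribution coming from $(A/B)(B'/B)$, and one differentiation further $\|(u_b-u)''\|_\infty=O(h^{\tilde{d}-1})$; tracking the constants through the Hermite--Genocchi bounds and the lower bound on $B$ shows that they depend only on $b-a$ and on $u$, as claimed.
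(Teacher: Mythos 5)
The paper does not prove this proposition at all: it is imported verbatim from \cite{bary2} (Berrut--Floater--Klein), so there is no internal proof to compare against. Measured against the cited source, your reconstruction follows essentially the right (and the standard) route: the Floater--Hormann error identity $u-u_b=A/B$ with $A(x)=\sum_i(-1)^iu[x_i,\dots,x_{i+\tilde{d}},x]$ and $B=\sum_i\lambda_i$, differentiation of divided differences by repeating the argument $x$, pairing of consecutive terms via the recursion to trade one factor $h$ for one extra node (hence the extra orders of smoothness $C^{\tilde{d}+3}$, $C^{\tilde{d}+4}$), the lower bound $|B(x)|\geq c\,h^{-(\tilde{d}+1)}$ from the same-sign dominant block, and the quotient rule. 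That is the skeleton of the actual proofs of Theorems 3 and 5 in \cite{bary2}.

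There is, however, one genuine flaw in the denominator step as you state it. The uniform bounds $|B'(x)/B(x)|=O(h^{-1})$ and $|B''(x)/B(x)|=O(h^{-2})$ on all of $[a,b]$ are false: near a node $x_j$ one has $B(x)=c_j/(x-x_j)+O(1)$ with $c_j\neq 0$, so $B'(x)/B(x)\sim -1/(x-x_j)$ blows up as $x\to x_j$, and no bound of the form $C/h$ holds uniformly. What is true, and what the argument actually needs, are uniform bounds on the combinations that enter the quotient rule, e.g. $|B'/B^2|=O(h^{\tilde{d}})$, $|B''/B^2|=O(h^{\tilde{d}-1})$ and $|(B')^2/B^3|=O(h^{\tilde{d}-1})$ (equivalently, bounds on the derivatives of $1/B$, which is smooth and vanishes at the nodes so the pole of $B'/B$ is cancelled by the extra factor $1/B$); alternatively one treats $x$ near a node separately, using the sharper node estimates for $e'(x_j)$, $e''(x_j)$, which is how the cited paper organizes the proof. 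Your final assembly only uses the products $(A/B)(B'/B)=A\,B'/B^2$ etc., so the conclusion survives, but the intermediate lemma you propose to prove is unprovable as stated and should be reformulated in terms of $B'/B^2$, $B''/B^2$ and $(B')^2/B^3$. A minor further remark: the parity of $r-\tilde{d}$ affects only the leftover unpaired term in the numerator bounds and does not appear in the statements of the derivative estimates, consistent with the proposition as quoted.
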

\section{The Bernoulli-barycentric rational matrix-collocation method}
\label{sec:Bernoulli-barycentric}
This section begins with an introduction to the conventional Bernoulli matrix method (shortly abbreviated as {\em Bm}) for standard ordinary differential equations (ODEs) with initial
values. Then, the method is combined with BRIs
to derive the Bernoulli-barycentric rational matrix-collocation method
(shortly, {\em B-brm-c}) for Eq.~(\ref{9yue26_2}). Since BPs are defined in the interval $[0,1]$, in the remaining part of this paper we will study them in this interval. However, they can be easily translated to
any interval $[0,T]$ by a simple linear mapping.

\subsection{The {\em Bm} method for ODE}
\label{sec:Bernoulli1}
Consider the ODE of the form
\begin{equation}\label{10yue3_1}
\begin{cases}
\beta_1u''(t)+\beta_2u'(t)+\kappa u(t)=f(t),& t\in(0,1],\\
u(0)=\alpha_0,~u'(0)=\alpha_1.
\end{cases}
\end{equation}
By twice integrating both sides of the above ODE (\ref{10yue3_1}) from 0 to $t$, and considering the initial values, the equivalent integral form
\begin{equation}\label{10yue3_2}
\beta_1u(t)+\beta_2\int^t_0u(s)ds+\kappa \int^{t}_0\int^{\xi}_0u(s)dsd\xi=F(t)
\end{equation}
is obtained, where $F(t)=\int^{t}_0\int^{\xi}_0f(s)dsd\xi+(\alpha_0\beta_2+\alpha_1\beta_1)t+\alpha_0\beta_1$.

Denote as $u_B(t)=\sum^N_{n=0}B_n(t)u_n$ the truncated Bernoulli series used to approximate $u(t)$ in $[0,1]$. Substituting $u_B(t)$ into (\ref{10yue3_2}) and approximating $F(t)$ with $F_B(t)=\sum^N_{n=0}B_n(t)F_n$, we get
\begin{equation}\label{10yue3_3}
\beta_1\sum^N_{n=0}B_n(t)u_n+\beta_2\int^t_0\sum^N_{n=0}B_n(s)u_nds
+\kappa \int^t_0\int^{\xi}_0\sum^N_{n=0}B_n(s)u_ndsd\xi=\sum^N_{n=0}B_n(t)F_n,
\end{equation}
where $F_n=\frac{1}{n!}\int^1_0F^{(n)}(s)ds$ are obtained by applying Eq.~(\ref{10yue2_3}) from Proposition~\ref{pro2}. Denoting by
\[B(t)=[B_0(t),B_1(t),\cdots,B_N(t)], \ U=[u_0,u_1,\cdots,u_N]^\top,\ F=[F_0,F_1,\cdots,F_N]^\top,\]
Eq.~(\ref{10yue3_3}) can be rewritten as
\begin{equation}\label{10yue4_1}
\beta_1B(t)U+\beta_2\int^t_0B(s)dsU +\kappa \int^t_0\int^{\xi}_0B(s)dsd\xi U=B(t)F.
\end{equation}
At this stage, we use the following estimates from~(\ref{10yue2_2}), 
\begin{equation}\label{1yue5_1}
 \int^t_0B(s)ds\approx B(t)P~\text{and}~\int^t_0\int^{\xi}_0B(s)dsd\xi\approx B(t)P^2,
\end{equation}
into~(\ref{10yue4_1}) to obtain the approximation equation
\begin{equation}\label{10yue4_2}
\beta_1B(t)U+\beta_2 B(t)PU+\kappa B(t)P^2U=B(t)F ~\Leftrightarrow
~B(t)\left[(\beta_1I+\beta_2 P+\kappa P^2)U - F\right] = 0.
\end{equation}
Since the Bernoulli basis is complete, we can eliminate $B(t)$ in~Eq. (\ref{10yue4_2}) resulting  in the system of linear equations
\begin{equation}\label{10yue4_3}
(\beta_1I+\beta_2 P+\kappa P^2)U=F,
\end{equation}
 where $I$ is the identity matrix of order $N+1$ and $U$ is the unknown vector to be solved. The following theorem describes the truncation error that exists between the original equation~(\ref{10yue3_2}) and the approximation equation~(\ref{10yue4_2}).
\begin{theorem}\label{th1}
The truncation error term between (\ref{10yue3_2}) and (\ref{10yue4_2}) does not exceed $C(2\pi)^{-N}$, where $C$ is a constant that only depends on $\beta_1,~\beta_2,~\kappa$ and the bounds for the derivatives of functions $u(t),~F(t)$.
\end{theorem}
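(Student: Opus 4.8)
The plan is to bound the difference between the exact integral equation (\ref{10yue3_2}) and the approximate collocation equation (\ref{10yue4_2}) by tracking the two distinct sources of error: (i) the truncation of the Bernoulli series for $u(t)$ and for $F(t)$ at degree $N$, and (ii) the replacement of the exact antiderivatives $\int_0^t B(s)\,ds$ and $\int_0^t\int_0^\xi B(s)\,ds\,d\xi$ by $B(t)P$ and $B(t)P^2$, which by (\ref{10yue2_2}) incurs the tail term $\frac{B_{N+1}(t)-B_{N+1}(0)}{N+1}e_{N+1}^\top$ (and a similar second-order tail). First I would write the exact equation satisfied by $u_B(t)$: substituting $u_B$ into the left-hand side of (\ref{10yue3_2}) and using the \emph{exact} identity (\ref{10yue2_2}) rather than the approximation (\ref{1yue5_1}), every term is expressed through $B(t)$, $B(t)P$, $B(t)P^2$ plus explicit correction terms involving $B_{N+1}(t)$ and $B_{N+2}(t)$. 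Subtracting (\ref{10yue4_2}) then isolates the truncation error as a sum of these correction terms together with $\beta_1(u(t)-u_B(t))$, $\beta_2\int_0^t(u(s)-u_B(s))\,ds$, $\kappa\int_0^t\int_0^\xi(u(s)-u_B(s))\,ds\,d\xi$, and the analogous remainder $F(t)-F_B(t)$ coming from the right-hand side.

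Next I would estimate each piece in the $\|\cdot\|_\infty$ norm on $[0,1]$. For the Bernoulli-tail correction terms, Proposition~\ref{pro1} gives $\|B_{N+1}\|_\infty \le C(N+1)!(2\pi)^{-(N+1)}$, and combined with the scalar prefactors $\frac{1}{N+1}$, $\frac{1}{(N+1)(N+2)}$, the factorials partially cancel; but this alone would leave a factorial growth, so the decisive observation is that the Bernoulli \emph{coefficients} $u_n$ of a smooth function decay fast — by Proposition~\ref{pro2}, $u_n = \frac{1}{n!}\int_0^1 u^{(n)}(t)\,dt$, so $|u_n| \le \frac{1}{n!}\sup|u^{(n)}|$. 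Pairing $|u_n|\le G/n!$ with $\|B_n\|_\infty \le Cn!(2\pi)^{-n}$ makes the factorials cancel exactly, yielding a geometric bound $O((2\pi)^{-n})$ for each mode, and this is precisely the mechanism that controls both the series-truncation remainders and the residual correction terms. The tail-of-series errors $u(t)-u_B(t)$ and $F(t)-F_B(t)$ are handled directly by Proposition~\ref{pro3}, giving $\|E(u_B)\|_\infty \le CG(2\pi)^{-N}$ and likewise for $F$; the single and double integrals only multiply these by the length of the integration interval (at most $1$), hence do not spoil the rate.

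Finally I would collect all the bounds: each contributing term is $O((2\pi)^{-N})$, with constants depending only on $\beta_1,\beta_2,\kappa$ and on a common bound $G$ for the derivatives of $u$ and $F$, so the total truncation error between (\ref{10yue3_2}) and (\ref{10yue4_2}) is $\le C(2\pi)^{-N}$ as claimed. The main obstacle I anticipate is bookkeeping the correction terms from the \emph{second} integration: applying (\ref{10yue2_2}) twice produces a nested tail of the form $\int_0^t \frac{B_{N+1}(\xi)-B_{N+1}(0)}{N+1}\,d\xi\, e_{N+1}^\top$, which must itself be re-expanded via (\ref{10yue2_2}) and generates a $B_{N+2}$-level remainder; one has to verify carefully that after multiplying by the (bounded, $N$-independent in norm) matrices $P$ and by the fast-decaying coefficient vector $U$, this higher-order tail is still $O((2\pi)^{-N})$ and not merely $O((2\pi)^{-N+1})$ with a hidden constant blow-up. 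Keeping the $\|P\|$-dependence and the factorial cancellation explicit at each nesting level is the delicate part; everything else is a routine triangle-inequality assembly.
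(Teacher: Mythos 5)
Your proposal is correct and follows essentially the same route as the paper's proof: split the error into the Bernoulli-series truncation remainders for $u$ and $F$ (Proposition~\ref{pro3}) plus the tail corrections $\frac{B_{N+1}(t)-B_{N+1}(0)}{N+1}e_{N+1}^\top$ arising from replacing the integrals by $B(t)P$ and $B(t)P^2$, and then cancel the factorial in $\|B_{N+1}\|_\infty\leq C(N+1)!(2\pi)^{-N-1}$ against the coefficient decay $|u_n|\leq G/n!$ from Proposition~\ref{pro2}. The nested-tail difficulty you anticipate does not actually arise: the paper never re-expands the inner integral, but simply bounds $\int_0^t\frac{B_{N+1}(\xi)-B_{N+1}(0)}{N+1}\,d\xi$ directly by $2\|B_{N+1}\|_\infty/(N+1)$ on $[0,1]$, so no $B_{N+2}$-level remainder is needed.
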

\begin{proof}
First, by applying Proposition~\ref{pro3} to Eq. (\ref{10yue3_2}), we can establish that there holds
\begin{equation}\label{11yue21_1}
\begin{split}
&\beta_1(u_B(t)+E(u_B(t)))+\beta_2 \left(\int^t_0u_B(s)ds+\int^t_0E(u_B(s))ds\right)
+\kappa \int^t_0\int^{\xi}_0u_B(s)dsd\xi\\
&+\kappa \int^t_0\int^{\xi}_0E(u_B(s))dsd\xi
=F_B(t)+E(F_B(t)),
\end{split}
\end{equation}
where $||E(u_B(t))||_{\infty}\leq C_1G_1(2\pi)^{-N}$, $||E(F_B(t))||_{\infty}\leq C_2G_2(2\pi)^{-N}$, with $G_1\geq ||u^{(i)}(t)||_{\infty}$ and $G_2\geq ||F^{(i)}(t)||_{\infty},~(i=0,1,2,\cdots)$. Namely, we have
\begin{equation}\label{10yue4_4}
\begin{split}
\beta_1B(t)U+\beta_2 \int^t_0B(s)dsU+\kappa \int^t_0\int^{\xi}_0B(s)dsd\xi U
&=B(t)F-\beta_1E(u_B(t))-\beta_2 \int^t_0 E(u_B(s))ds\\
&\quad~-\kappa\int^t_0\int^{\xi}_0E(u_B(s))dsd\xi+E(F_B(t)).
\end{split}
\end{equation}

Moreover, by the equality (\ref{10yue2_2}), we know that the difference between $\int^t_0B(s)ds$ and $B(t)P$
is $\frac{B_{N+1}(t)-B_{N+1}(0)}{N+1}e^\top_{N+1},$  and that
\[\int^t_0\int^{\xi}_0B(s)dsd\xi=B(t)P^2+\frac{B_{N+1}(t)-B_{N+1}(0)}{N+1}e^\top_{N+1}P
+\int^t_0\frac{B_{N+1}(\xi)-B_{N+1}(0)}{N+1}e^\top_{N+1}d\xi.\]
Hence, we can rewrite Eq.~(\ref{10yue4_4}) as
\begin{equation}\label{10yue4_5}
\begin{split}
\beta_1B(t)U+\beta_2 B(t)PU +\kappa B(t)P^2U=B(t)F-\beta_1E(u_B(t))-\beta_2 \int^t_0 E(u_B(s))ds \\
-\kappa\int^t_0\int^{\xi}_0E(u_B(s))dsd\xi
 +E(F_B(t))-\beta_2u_N\frac{B_{N+1}(t)- B_{N+1}(0)}{N+1}e_{N+1}^\top\\ 
 -\kappa\frac{B_{N+1}(t)-B_{N+1}(0)}{N+1}e^\top_{N+1}PU
-\kappa\int^t_0\frac{B_{N+1}(\xi)-B_{N+1}(0)}{N+1}e^\top_{N+1}d\xi U.
\end{split}
\end{equation}
Additionally, we know from Propositions~\ref{pro1}-\ref{pro2} that
\[\left\|u_N\frac{B_{N+1}(t)-B_{N+1}(0)}{N+1}e_{N+1}^T\right\|_{\infty}\leq
\frac{2C(N+1)!(2\pi)^{-N-1}}{N+1}\cdot \frac{1}{N!}\int^1_0u^{(N)}(s)ds
\leq C_3(2\pi)^{-N},\]
\begin{equation*}
\begin{split}\left\|\frac{B_{N+1}(t)-B_{N+1}(0)}{N+1}e_{N+1}^TPU\right\|_{\infty}
&=\left\|\frac{B_{N+1}(t)-B_{N+1}(0)}{N+1}\frac{u_{N-1}}{N}\right\|_{\infty}\\ 
&\leq \frac{2C(N+1)!(2\pi)^{-N-1}}{N+1}\cdot \frac{1}{N!}\int^1_0u^{(N-1)}(s)ds
\leq C_4(2\pi)^{-N},
\end{split}
\end{equation*}
\[\left\|\int^t_0\frac{B_{N+1}(\xi)-B_{N+1}(0)}{N+1}e_{N+1}^TUd\xi\right\|_{\infty}
=\left\|u_N\int^t_0\frac{B_{N+1}(\xi)-B_{N+1}(0)}{N+1}d\xi\right\|_{\infty}
\leq C_3(2\pi)^{-N},\]
where $C_3$ and $C_4$ are constants that depend on the bound of $u^{(N)}(t),~u^{(N-1)}(t),~t\in [0,1]$, respectively. Hence, the difference between (\ref{10yue4_5}) and (\ref{10yue4_2}) does not exceed
\[(\beta_1C_1G_1+\beta_2C_1G_1+\kappa C_1G_1+C_2G_2+\beta_2C_3+\kappa C_4+\kappa C_3)(2\pi)^{-N},\]
and this completes the proof of Theorem \ref{th1}.  
\end{proof}
\begin{remark} From Eq.~(\ref{10yue4_3}), we see that the coefficient matrix has the following structure:
\[\beta_1I+\beta_2P+\kappa P^2=\begin{bmatrix}
	*  & * &*&*&\cdots & *& *& *\\
	*  & *\ &*&*&\cdots & * &*& *\\
    * &* &*&& &  & & \\
      & \ddots  &\ddots & \ddots & \\
      & &\ddots  &\ddots & \ddots & \\
      & &  &\ddots  &\ddots & \ddots & \\
      & & & &\ddots  &\ddots & \ddots & \\
	  &  && &  &*& * & *
\end{bmatrix}.\]
Applying the {\em Bm} method to approximate the solution $u(t,x,y)$ of Eq.~(\ref{9yue26_2}) results in the block-structured system of linear equations. The matrix structure of $\beta_1I+\beta_2P+\kappa P^2$ can be utilized to develop efficient preconditioners, as shown in the next section.
\end{remark}
\subsection{The {\em B-brm-c} method}
\label{sec:Bbrmc1}
In this section, we solve PDEs of the form~(\ref{9yue26_2}) using BPs in time and BRIs in space. By twice integrating both sides of evolutionary PDEs~(\ref{9yue26_2}) from 0 to time $t$, and employing the initial conditions, we can obtain
\begin{equation}\label{10yue5_2}
 \beta_1u(t,x,y)+\beta_2\int^t_0u(s,x,y)ds+
 \int^t_0\int^{\xi}_0\mathcal{L}u(s,x,y)dsd\xi=F(t,x,y),~
 (t,x,y)\in (0,1]\times\Omega,
\end{equation}
 where $F(t,x,y)=\int^t_0\int^{\xi}_0f(s,x,y)dsd\xi+[\beta_2\alpha_0(x,y)
 +\beta_1\alpha_1(x,y)]t+\beta_1\alpha_0(x,y)$.

Let \[\tilde{u}(t,x,y)=\sum^N_{n=0}B_n(t)u^n(x,y)
 ~~\text{and}~~ \tilde{F}(t,x,y)=\sum^N_{n=0}B_n(t)F^n(x,y)\]
  be  approximations of $u(t,x,y)$ and $F(t,x,y)$, respectively, where $u^n(x,y),~(n=0,1,\cdots,N)$ are all unknown functions of $x,y$, and $F^n(x,y)=\frac{1}{n!}\int^1_0F^{(n)}(s,x,y)ds,~(n=0,1,\cdots,N)$.
Substituting these expressions into (\ref{10yue5_2}) results in
\begin{equation}\label{10yue5_3}
\begin{array}{ll}
 \beta_1\sum\limits^{N}_{n=0}B_n(t)u^n(x,y)+\beta_2\sum\limits^{N}_{n=0}\left(\int^t_0B_n(s)dsu^n(x,y)\right)+a_{1}\sum\limits^{N}_{n=0}\left(
 \int^t_0\int^{\xi}_0B_n(s)dsd\xi\frac{\partial^2u^n(x,y)}{\partial x^2}\right)\\
 +~a_{2}\sum\limits^{N}_{n=0}\left(
\int^t_0\int^{\xi}_0B_n(s)dsd\xi\frac{\partial^2u^n(x,y)}{\partial x\partial y}\right)
 +~a_{3}\sum\limits^{N}_{n=0}\left(\int^t_0\int^{\xi}_0B_n(s)dsd\xi\frac{\partial^2u^n(x,y)}{\partial y^2}\right)\\
 +~a_{4}\sum\limits^{N}_{n=0}\left(\int^t_0\int^{\xi}_0B_n(s)dsd\xi\frac{\partial u^n(x,y)}{\partial x}\right)
 +~a_{5}\sum\limits^{N}_{n=0}\left(\int^t_0\int^{\xi}_0B_n(s)dsd\xi\frac{\partial u^n(x,y)}{\partial y}\right)\\
 +~a_{6}\sum\limits^{N}_{n=0}\left(\int^t_0\int^{\xi}_0B_n(s)dsd\xi u^n(x,y)\right) =\sum\limits^{N}_{n=0}B_n(t)F^n(x,y)+O\left((2\pi)^{-N}\right),
 ~(t,x,y)\in (0,1]\times \Omega.
 \end{array}
\end{equation}
Let $U(x,y)=[u^0(x,y),~u^1(x,y),~\cdots,~u^N(x,y)]^\top$, $F(x,y)=[F^0(x,y),~F^1(x,y),~\cdots,~F^N(x,y)]^\top$,
\[\frac{\partial^2 U(x,y)}{\partial x^2}=\left[\frac{\partial^2u^0(x,y)}{\partial x^2},~\frac{\partial^2u^1(x,y)}{\partial x^2},~\cdots,~\frac{\partial^2u^N(x,y)}{\partial x^2}\right]^\top,\]
and $\frac{\partial^2 U(x,y)}{\partial x\partial y}, \frac{\partial^2 U(x,y)}{\partial y^2}, \frac{\partial U(x,y)}{\partial x},\frac{\partial U(x,y)}{\partial y}$
 be defined in the same fashion. As in the previous section, we approximate $\int^t_0B(s)ds\approx B(t)P$ and $\int^t_0\int^{\xi}_0B_n(s)dsd\xi\approx B(t)P^2$ with the help of Eq.~(\ref{1yue5_1}).
Then, Eq.~(\ref{10yue5_3}) can be expressed as
\begin{equation}\label{10yue5_4}
 \beta_1B(t)U(x,y)+\beta_2B(t)PU(x,y)
 +B(t)P^2\mathcal{L}U(x,y)
 =B(t)F(x,y)+O\left((2\pi)^{-N}\right).
\end{equation}
Consider the uniform partitioning $a=x_0<\cdots< x_{\tilde{M}}=b, ~c=y_0<\cdots<y_{\overline{M}}=d$ with grid sizes $ h_x=\frac{b-a}{\tilde{M}},~ h_y=\frac{d-c}{\overline{M}}$, and fix two positive integers $d_x,~d_y$ with $0\leq d_x\leq \tilde{M},~0\leq d_y\leq \overline{M}$. We approximate $u^n(x,y)$ as \[\tilde{u}^n(x,y)=\sum^{\tilde{M},\overline{M}}_{i=0,j=0}\varphi_{i}(x)
\varphi_j(y)u^n_{i,j}\]
for each $0\leq n\leq N$,~where $u^n_{i,j}=u(t_n,x_i,y_j)$ and functions $\varphi_{i}(x)$ (resp., $\varphi_{j}(y)$) are defined according to~(\ref{4yue9_1}) using $x_0,\cdots,x_{\tilde{M}}$ and $\tilde{d}=d_x$ (resp., $y_0,\cdots,y_{\overline{M}}$ and $\tilde{d}=d_y$). At this stage, the following collocation equations 
\begin{equation}\label{1yue19_1}
\begin{array}{ll}
 \beta_1B(t)\tilde{U}(x,y)+\beta_2B(t)P\tilde{U}(x,y)
 +B(t)P^2\mathcal{L}\tilde{U}(x,y)\\
 =B(t)F(x,y)+O\left((2\pi)^{-N}+h_x^{d_x-1}+h_y^{d_y-1}\right),
 ~~(t,x,y)\in (0,1]\times  \Omega,
 \end{array}
\end{equation}
can be derived from~(\ref{10yue5_4}), with $\tilde{U}(x,y)=[\tilde{u}^0(x,y),~\tilde{u}^1(x,y),~\cdots,~\tilde{u}^N(x,y)]^\top.$ 

From Propositions~\ref{pro5}--\ref{pro6}, the error approximation $O\left(h_x^{d_x-1}+h_y^{d_y-1}\right)$ is obtained. By considering the completeness of the Bernoulli basis $B(t)$, ignoring the error  term $O\left((2\pi)^{-N}+h_x^{d_x-1}+h_y^{d_y-1}\right)$ and using the boundary conditions in~(\ref{9yue26_2}), we finally derive the linear equations
\begin{equation}\label{10yue5_5}
\begin{array}{ll}
HU=R,~~H=\beta_1I\otimes I_{\hat{k}}+\beta_2P\otimes I_{\hat{k}}+
P^2\otimes Q,
\end{array}
\end{equation}
where $\hat{k}=(\tilde{M}-1)(\overline{M}-1)$, $Q=A_{1}H_{1}+ A_{2}H_{2}+A_{3}H_{3}
+A_{4}H_{4}+A_{5}H_{5}+ A_{6}$,
\[U=[\mathbf{U}^0,\mathbf{U}^1,\cdots,\mathbf{U}^N]^\top=[\underbrace{u^0_{1,1}, \cdots, u^0_{1,\overline{M}-1},\cdots,u^0_{\tilde{M}-1,1}, \cdots, u^0_{\tilde{M}-1,\overline{M}-1}}_{\hat{k}},\cdots,\] \[\cdots,\underbrace{u^N_{1,1}, \cdots, u^N_{1,\overline{M}-1},\cdots,u^N_{\tilde{M}-1,1}, \cdots, u^N_{\tilde{M}-1,\overline{M}-1}}_{\hat{k}}]^\top,\]
\[
A_{i}=\mathrm{diag}\Big(a_{i}(x_1,y_1),\cdots,a_{i}(x_1,y_{\overline{M}-1}),\cdots,
a_{i}(x_{\tilde{M}-1},y_1),\cdots,a_{i}(x_{\tilde{M}-1},
y_{\overline{M}-1})\Big),~i=1,\cdots,6,
\]
\[H_1=T_{xx}\otimes Q_y,~H_2=T_{x}\otimes T_y,~H_3=Q_{x}\otimes T_{yy},~H_4=T_{x}\otimes Q_y,~H_5=Q_{x}\otimes T_y,\]
with two identity matrices $Q_x$ and $Q_y$ of orders $\tilde{M}-1$ and $\overline{M}-1$, respectively. We also define 
\[T_{x}=\begin{bmatrix}
 \varphi_1'(x_1)&\cdots&\varphi_{\tilde{M}-1}'(x_1)\\
\vdots&\vdots&\vdots\\ \varphi_1'(x_{\tilde{M}-1})&\cdots&\varphi_{\tilde{M}-1}'(x_{\tilde{M}-1})
\end{bmatrix},~T_{xx}=\begin{bmatrix}
 \varphi_1''(x_1)&\cdots&\varphi_{\tilde{M}-1}''(x_1)\\
\vdots&\vdots&\vdots\\ \varphi_1''(x_{\tilde{M}-1})&\cdots&\varphi_{\tilde{M}-1}''(x_{\tilde{M}-1})
\end{bmatrix}\in\mathbb{R}^{(\tilde{M}-1)\times (\tilde{M}-1)},\]
and $T_y, T_{yy}$ are obtained by simply substituting $x_i~(i=1,2,\cdots,\tilde{M}-1)$ in $T_x, T_{xx}$ with $y_i~(i=1,2,\cdots,\overline{M}-1)$.


Using Propositions~\ref{pro5} and~\ref{pro6}, we get the following estimate of the truncation error.
\begin{theorem}\label{th2}
The truncation error between (\ref{10yue5_2}) and (\ref{1yue19_1}) is  $O\left((2\pi)^{-N}+h_x^{d_x-1}+h_y^{d_y-1}\right)$.
 \end{theorem}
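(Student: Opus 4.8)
The plan is to bound the difference between the exact integral equation \eqref{10yue5_2} and the fully discrete collocation identity \eqref{1yue19_1} by splitting the total error into two independent contributions: the error coming from the truncation of the Bernoulli series in time (together with the replacement of the exact integral operators by the matrices $P$ and $P^2$), and the error coming from replacing the unknown functions $u^n(x,y)$ and their spatial derivatives by their barycentric rational interpolants. Since these two approximations act on disjoint variables, I would handle them one at a time and then combine via the triangle inequality.

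First I would treat the temporal part, essentially reusing the mechanism of Theorem~\ref{th1}. Substituting $\tilde u(t,x,y)=\sum_{n=0}^N B_n(t)u^n(x,y)$ and $\tilde F(t,x,y)=\sum_{n=0}^N B_n(t)F^n(x,y)$ into \eqref{10yue5_2}, Proposition~\ref{pro3} applied in the $t$-variable (with the spatial variables frozen) gives $\|E(\tilde u)\|_\infty \le C_1 G_1 (2\pi)^{-N}$ and $\|E(\tilde F)\|_\infty \le C_2 G_2 (2\pi)^{-N}$, where $G_1,G_2$ bound all $t$-derivatives of $u$ and $F$ uniformly on $\overline\Omega$. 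The extra terms produced by exchanging $\int_0^t B(s)\,ds$ with $B(t)P$ and $\int_0^t\int_0^\xi B(s)\,ds\,d\xi$ with $B(t)P^2$ are, by \eqref{10yue2_2}, multiples of $\frac{B_{N+1}(t)-B_{N+1}(0)}{N+1}$, which by Proposition~\ref{pro1} and Proposition~\ref{pro2} are again $O((2\pi)^{-N})$; this is exactly the computation carried out in the proof of Theorem~\ref{th1}, now with $\kappa$ replaced by the (bounded) variable coefficients $a_1,\dots,a_6$, so the constant depends on $\beta_1,\beta_2$ and on $\max_i\|a_i\|_\infty$ in addition to the derivative bounds of $u$ and $F$. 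This yields the identity \eqref{10yue5_4} with a remainder of order $(2\pi)^{-N}$.

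Next I would insert the spatial interpolation. Writing $u^n(x,y)=\tilde u^n(x,y)+e^n(x,y)$ with $e^n$ the tensor-product barycentric interpolation error, I need to control $e^n$ together with $\partial_x e^n$, $\partial_y e^n$, $\partial_{xx}e^n$, $\partial_{xy}e^n$, $\partial_{yy}e^n$, since all of these appear through $\mathcal L$ acting on $U(x,y)$. Applying Proposition~\ref{pro5} in each variable gives the function-value error $O(h_x^{d_x+1}+h_y^{d_y+1})$, while Proposition~\ref{pro6} gives $O(h_x^{d_x})$ (resp.\ $O(h_y^{d_y})$) for first derivatives and $O(h_x^{d_x-1})$ (resp.\ $O(h_y^{d_y-1})$) for second derivatives; for the mixed derivative $\partial_{xy}$ one interpolates in one variable and differentiates, then in the other, so the dominant term is again $O(h_x^{d_x}+h_y^{d_y})$, which is absorbed. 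The worst-behaved contributions are the pure second derivatives multiplied by $a_1$ and $a_3$, of size $O(h_x^{d_x-1}+h_y^{d_y-1})$, and since these come pre-multiplied by $P^2$ (a fixed bounded matrix), the overall spatial remainder is $O(h_x^{d_x-1}+h_y^{d_y-1})$. Adding the two remainders and using the completeness of the Bernoulli basis to cancel $B(t)$ gives the claimed bound.

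The main obstacle I anticipate is bookkeeping rather than any deep difficulty: one must be careful that the smoothness hypotheses tacitly required by Propositions~\ref{pro5} and~\ref{pro6} (namely $u^n\in C^{d_x+4}$ in $x$, $C^{d_y+4}$ in $y$, with $d_x\ge 3$, $d_y\ge 3$ for the second-derivative estimate) are inherited by the exact coefficient functions $u^n(x,y)=\frac{1}{n!}\int_0^1 u^{(n)}_t(t,x,y)\,dt$ from the assumed regularity of the PDE solution $u$, and that the mixed partial derivative $\partial_{xy}$ is estimated by a two-step argument since Proposition~\ref{pro6} is stated only for one-variable derivatives. One also needs the variable coefficients $a_i$ to be bounded on $\overline\Omega$ so that multiplying the interpolation errors by $A_i$ does not spoil the rate; this is consistent with the standing smoothness assumptions. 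Once these regularity points are checked, the estimate follows by collecting the $O((2\pi)^{-N})$ term from the temporal analysis and the $O(h_x^{d_x-1}+h_y^{d_y-1})$ term from the spatial analysis.
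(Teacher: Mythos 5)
Your proposal is correct and follows essentially the same route as the paper, which obtains the bound by combining the temporal argument of Theorem~\ref{th1} (Bernoulli truncation plus the $P$, $P^2$ replacements from \eqref{10yue2_2}) with the spatial interpolation estimates of Propositions~\ref{pro5}--\ref{pro6}, the dominant $O(h_x^{d_x-1}+h_y^{d_y-1})$ term coming from the second-derivative errors. Your additional bookkeeping on the regularity of $u^n(x,y)$, the mixed derivative, and the boundedness of the coefficients $a_i$ is consistent with (and more explicit than) the paper's treatment.
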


For the special case $\beta_1=0,~\beta_2=1$ in Eq.~(\ref{9yue26_2}),
we examine the existence of the solution of~(\ref{10yue5_5}). By means of 
Sections \ref{sec:Bernoulli1}--\ref{sec:Bbrmc1}, the coefficient matrix $H$ in~(\ref{10yue5_5}) can be clearly represented as 
\begin{equation}\label{10yue29_1}
H=I\otimes I_{\hat{k}}+P\otimes Q=\begin{bmatrix}
	I-B_1(0)Q  & -\frac{B_2(0)Q}{2} & -\frac{B_3(0)Q}{3} & \cdots & -\frac{B_N(0)Q}{N} & \mathbf{0}\\
	Q  & I  &\mathbf{0}& \cdots & \mathbf{0} & \mathbf{0}\\
	\mathbf{0}  & \frac{Q}{2}  & I  & \cdots & \mathbf{0} & \mathbf{0}\\
    \vdots & \vdots & \ddots &\ddots &\ \vdots &\ \vdots \\
    \mathbf{0}  & \mathbf{0} & \mathbf{0}  &\ddots& I &\mathbf{0}\\
    \mathbf{0} & \mathbf{0} & \mathbf{0}  & \cdots & \frac{Q}{N} & I
\end{bmatrix},
\end{equation}
as we only need to integrate both sides of Eq.~(\ref{9yue26_2}) once from 0 to $t$. And from~(\ref{10yue29_1}), the following result can be proved.
\begin{theorem}	\label{th3}
The coefficient matrix $H$ in Eq. (\ref{10yue29_1}) is nonsingular if and only if $T_1=\sum^{N}_{j=0}(-1)^j\frac{B_j(0)}{j!}Q^j$ is nonsingular. In particular, as $N\rightarrow +\infty$, the nonsingularity of $H$ is equivalent to that of $Q$ in (\ref{10yue29_1}).
\end{theorem}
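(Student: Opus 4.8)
The plan is to reduce everything to the single determinant identity $\det H=\det T_1$, which I would obtain by one block Schur--complement step exploiting the structure displayed in~\eqref{10yue29_1}: apart from its first block row, $H$ is block lower triangular (indeed block lower bidiagonal) with identity diagonal blocks.

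First I would partition $H$ conformally into a $2\times 2$ block form: the $(1,1)$ block is $A=I-B_1(0)Q$ (of size $\hat k$), the remainder of the first block row is $B=\bigl[-\tfrac{B_2(0)}{2}Q,\ldots,-\tfrac{B_N(0)}{N}Q,\,\mathbf 0\bigr]$, the remainder of the first block column is $C=[Q,\mathbf 0,\ldots,\mathbf 0]^{\top}$, and $D$ is the trailing $N\times N$ array of blocks. By construction $D$ is block lower triangular with every diagonal block equal to $I_{\hat k}$ and first subdiagonal blocks $(D)_{r,r-1}=Q/r$; hence $\det D=1$, and writing $D=I+L$ with $L$ nilpotent we have $D^{-1}=\sum_{k\ge 0}(-L)^{k}$ (a finite sum). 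Since $C$ has only its first block entry nonzero, $D^{-1}C$ is governed by the first block column of $D^{-1}$ alone, and from $(L^{k})_{r,r-k}=Q^{k}/\bigl(r(r-1)\cdots(r-k+1)\bigr)$ one reads off $(D^{-1}C)_{r}=(-1)^{r-1}Q^{r}/r!$ for $r=1,\ldots,N$.

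Then I would assemble the Schur complement of $D$. Multiplying $B$ into the vector just found and re-indexing the sum, $BD^{-1}C=\sum_{j=2}^{N}(-1)^{j-1}\tfrac{B_j(0)}{j!}Q^{j}$, so that, using $B_0(0)=1$,
\[
A-BD^{-1}C=I-B_1(0)Q+\sum_{j=2}^{N}(-1)^{j}\tfrac{B_j(0)}{j!}Q^{j}=\sum_{j=0}^{N}(-1)^{j}\tfrac{B_j(0)}{j!}Q^{j}=T_1.
\]
Consequently $\det H=\det D\cdot\det\!\bigl(A-BD^{-1}C\bigr)=\det T_1$, which is exactly the asserted equivalence between the nonsingularity of $H$ and that of $T_1$.

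For the limiting statement I would use that the $B_j(0)$ are the Bernoulli numbers, with exponential generating function $\sum_{j\ge 0}\tfrac{B_j(0)}{j!}t^{j}=t/(e^{t}-1)$ and radius of convergence $2\pi$ (consistent with Proposition~\ref{pro1}). Thus, once $\rho(Q)<2\pi$ the matrix series converges and $T_1\to (-Q)(e^{-Q}-I)^{-1}=Q\,(I-e^{-Q})^{-1}$ as $N\to\infty$; passing to determinants, and noting that $I-e^{-Q}$ is invertible under the appropriate spectral condition on $Q$, ties the nonsingularity of $H$ in the limit $N\to\infty$ to that of $Q$. The bookkeeping in the Schur--complement step is routine; the step that genuinely requires care is this passage to the limit — establishing term-by-term convergence of the truncated Bernoulli matrix series, pinning the threshold $2\pi$ to the singularities of $t/(1-e^{-t})$, and making precise the spectral hypotheses on $Q$ under which the equivalence with $\det Q$ holds (the scalar symbol $\lambda\mapsto\lambda/(1-e^{-\lambda})$ being analytic at the origin but having poles on $2\pi\mathrm{i}\,\mathbb{Z}\setminus\{0\}$).
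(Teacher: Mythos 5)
Your Schur--complement argument for the first assertion is correct and is in substance the same elimination the paper performs: the paper right-multiplies $H$ by the elementary block matrices $S_N\cdots S_1$ to reach the block triangular matrix $T$ in (\ref{1yue9_1}), whose $(1,1)$ block is $T_1$, whereas you carry out the same elimination in one stroke by taking the Schur complement of the trailing unit block-bidiagonal matrix $D$. Your bookkeeping checks out: $(D^{-1}C)_r=(-1)^{r-1}Q^r/r!$, hence $BD^{-1}C=\sum_{j=2}^{N}(-1)^{j-1}\frac{B_j(0)}{j!}Q^j$ and $A-BD^{-1}C=T_1$; since $D$ is unit block triangular, $\det H=\det T_1$, so $H$ is nonsingular if and only if $T_1$ is. For the first sentence of Theorem \ref{th3} your route and the paper's differ only in packaging.

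The second sentence is where your proposal has a genuine gap, and you say so yourself: you derive, as the paper does from the generating function $\sum_{j\ge 0}\frac{B_j(0)}{j!}s^j=s/(e^s-1)$, that $T_1\to Q(I-e^{-Q})^{-1}$ when $\rho(Q)<2\pi$, but you then defer ``the spectral hypotheses on $Q$ under which the equivalence with $\det Q$ holds'' instead of proving the stated equivalence. That deferred step is not routine bookkeeping: from the closed form alone only one direction follows (if $Q$ is nonsingular and $\rho(Q)<2\pi$, then $I-e^{-Q}$ is nonsingular and so is the limit). For the converse, observe that when $Q$ is singular the limit is the analytic matrix function $\phi(Q)$ with $\phi(\lambda)=\lambda/(1-e^{-\lambda})$ and $\phi(0)=1$, and $\phi$ is zero-free on $|\lambda|<2\pi$; hence nonsingularity of $\lim_{N\to\infty}T_1$ cannot by itself force nonsingularity of $Q$, which shows that the point you flag (the removable singularity at the origin and the poles on $2\pi i\,\mathbb{Z}\setminus\{0\}$) is exactly the crux and cannot be waved away. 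The paper closes this part by a different device: it passes to the Jordan form of $Q$, identifies the nonsingularity of $Q$ with that of $e^{-Q}-I$, and reads the equivalence off $\lim_{N\to\infty}T_1=-Q(e^{-Q}-I)^{-1}$. Your proposal neither adopts that route nor supplies an alternative argument for the converse direction, so as written it establishes the first assertion completely but only half of the limiting claim.
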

\begin{proof} When we define
 \[S_1=\begin{bmatrix}
 I&&&&\\ -Q&I&&&\\&&I&&\\ &&&\ddots&\\ &&&&I
 \end{bmatrix},
 ~S_2=\begin{bmatrix}
 I&&&&\\ &I&&&\\&-\frac{Q}{2}&I&&\\ &&&\ddots&\\ &&&&I
 \end{bmatrix},~\cdots,
 ~S_N=\begin{bmatrix}
 I&&&&\\ &I&&&\\&&I&&\\ &&&\ddots&\\ &&&-\frac{Q}{N}&I
 \end{bmatrix},\]
it immediately follows
\begin{equation}\label{1yue9_1}
H\cdot (S_N\cdot S_{N-1}\cdots S_1)=T,\ \ \
T=\begin{bmatrix}
T_1&T_2&\cdots&T_N&\mathbf{0}\\&I&&&\\&&\ddots&&\\&&&I&\\&&&&I
\end{bmatrix},
\end{equation}
where \[T_j=-\frac{B_j(0)Q}{j}+\frac{B_{j+1}(0)Q^2}{j(j+1)}
-\frac{B_{j+2}(0)Q^3}{j(j+1)(j+2)}+\cdots
+(-1)^{N+j-1}\frac{B_N(0)Q^{N-j+1}}{j(j+1)\cdots N},~j=2,\cdots,N.\]
Consequently, the invertibility of $H$ is equivalent to that of $T_1$.
On the other hand, we suppose that the matrix $Q$ has 
$k$ different eigenvalues: $\lambda_1,~\lambda_2,~\cdots,~\lambda_k$, then, 
		by the theory of Jordan matarix and matrix function,  there holds
		\[Q=DJD^{-1},~J=\mathrm{diag}(J_1,J_2,\cdots,J_k),~~J_i=\begin{bmatrix}
			\lambda_i&1&&\\ &\ddots&\ddots&\\
			&&\lambda_i&1\\&&&\lambda_i	
		\end{bmatrix}, ~(i=1,~2,~,k)\]
		and 
		\[
		e^{-Q}-I=D\cdot \mathrm{diag}(e^{-J_1}-I,\cdots,e^{-J_k}-I)\cdot D^{-1},~
		e^{-J_i}-I=\begin{bmatrix}
			e^{-\lambda_i}-1&*&\cdots&*\\
			&e^{-\lambda_i}-1&\ddots&\vdots\\
			&&\ddots&*\\
			&&&e^{-\lambda_i}-1
		\end{bmatrix},\]
		thus, we can clearly see that the nonsingularity of $Q$ is the same as that of $e^{-Q}-I$. 
		
The following series expansion related to BPs \cite{Bernoulli1}
		\[\frac{se^{st}}{e^s-1}=\sum^{+\infty}_{j=0}\frac{B_j(t)}{j!}s^j\]
helps us to obtain 
		\[\lim_{N\rightarrow +\infty}T_1=\lim_{N\rightarrow +\infty}\sum^{N}_{j=0}(-1)^j\frac{B_j(0)}{j!}Q^j=-Q(e^{-Q}-I)^{-1}, \]
		and $Q$ is nonsingular if and only if $\lim_{N\rightarrow +\infty}T_1$ is nonsingular. This completes the proof of Theorem~\ref{th3}. 
\end{proof}
\begin{remark} 
In practice, although we cannot directly use Theorem~\ref{th3} to verify the invertibility of the coefficient matrix $H$ in Eq. (\ref{10yue29_1}) due to that a small integer $N$ is selected, but the uniformly discretized linear systems (\ref{10yue5_5}) including Eq. \eqref{10yue29_1} still have the unique solution in all our numerical implementations. Additionally, according to Theorem~\ref{th3}, the direct solution of linear system~($\ref{10yue29_1}$) may not be feasible due to the complex representation of the matrix $T_1$ (not sparse) required for matrix factorization~(\ref{1yue9_1}). This motivates us to develop the efficient preconditioned iterative solver(s) for the discretized linear system~(\ref{10yue5_5}).

%
\end{remark}
\section{A class of dimension expanded preconditioners for discretized linear systems}
\label{sec:precon}
In this section, we study a family of preconditioners for the fast iterative solution of linear system~(\ref{10yue5_5}), computed by expanding the dimension of the coefficient matrix following an approach presented in~\cite{luo2,luo3}. In this section, we consider that $N$ is a positive definite even number since in this case there is $B_N(0)\neq 0$, which allows the suitable factorization of these preconditioners. 
\subsection{Case 1: $\beta_1=0$ and $\beta_2=1$}
\label{sec:precon1}
In this case, the coefficient matrix in~(\ref{10yue5_5}) is represented by matrix $H$ in~(\ref{10yue29_1}). Initially, we multiply a matrix $\tilde{S}$ to transform Eq. (\ref{10yue5_5}) into the equivalent system
\begin{equation}\label{10yue30_1}
\tilde{S}HU = \tilde{S}R \Longleftrightarrow \tilde{H}U=\tilde{R},
\end{equation}
where
\begin{equation*}
\tilde{H}=\begin{bmatrix}
	I & B_1(0)I & B_2(0)I &\cdots &B_{N-1}(0)I& B_N(0)I \\
	Q  & I  & \mathbf{0} & \cdots & \mathbf{0}& \mathbf{0}\\
	\mathbf{0} & \frac{Q}{2} & I  & \cdots  & \mathbf{0}&\mathbf{0}\\
    \vdots & \vdots & \ddots & \ddots &\vdots & \vdots \\
    \mathbf{0} &\mathbf{0} & \mathbf{0} & \ddots & I& \mathbf{0}\\
    \mathbf{0} &\mathbf{0} & \mathbf{0} & \cdots & \frac{Q}{N} &I
\end{bmatrix}~{\rm and}~\tilde{S}=\begin{bmatrix}
I&B_1(0)I&\cdots&B_N(0)I\\
&I&&\vdots\\
&&\ddots&\\
&&&I
\end{bmatrix}.
\end{equation*}
Next, we augment Eq. (\ref{10yue30_1}) into the new system
\begin{equation}\label{10yue30_3}
\mathbb{H}\mathbb{U}=\mathbb{R},
\end{equation}
where $\mathbb{U}=[-\mathbf{U}^N,\mathbf{U}^0,\mathbf{U}^1,\cdots,\mathbf{U}^N]^\top$, and
\[\mathbb{H}=\begin{bmatrix}
	I&I &B_1(0)I &B_2(0)I &\cdots &B_{N-1}(0)I &I+B_{N}(0)I\\
	\mathbf{0}&Q & I &\mathbf{0} &\cdots&\mathbf{0}&\ \mathbf{0}\\
	\mathbf{0}&\mathbf{0} & \frac{Q}{2}  & I  &\cdots & \mathbf{0}& \mathbf{0}\\
    \vdots&\vdots & \vdots & \ddots & \ddots & \vdots &\vdots\\
    \mathbf{0}&\mathbf{0} &\mathbf{0} &  \cdots &\ddots &{\color{blue}{I}}&\mathbf{0}\\
    \mathbf{0}&\mathbf{0} &\mathbf{0} &\cdots &\cdots&\frac{Q}{N} & I\\
    I& \mathbf{0}&\mathbf{0} &\cdots&\cdots &\mathbf{0} &I\\
\end{bmatrix},
~~\mathbb{R}=\begin{bmatrix}
\tilde{R},\\ \mathbf{0}
\end{bmatrix}.\]
To clarify the construction of the preconditioner, we highlight one block $I$ in the matrix $\mathbb{H}$ in blue. the preconditioner $P_{\rm DE1}$ and the difference $\Delta H_1=\mathbb{H}-P_{\rm DE1}$ can be given as follows:
\begin{equation}\label{10yue30_4}
P_{\rm DE1}=\begin{bmatrix}
	I& I  & B_1(0)I & B_2(0)I &\cdots & B_{N-1}(0)I & I+B_{N}(0)I\\
	\mathbf{0}&Q &I &\mathbf{0}&\cdots&\mathbf{0}&\mathbf{0}\\
	\mathbf{0}&\mathbf{0} &\frac{Q}{2} & I &\cdots & \mathbf{0}& \mathbf{0}\\
    \vdots&\vdots & \vdots & \ddots & \ddots &\vdots &\vdots\\
    \mathbf{0}&\mathbf{0} &\mathbf{0} & \cdots &\ddots& {\color{blue}{\mathbf{0}}}&\mathbf{0}\\
    \mathbf{0}&\mathbf{0} &\mathbf{0} &\cdots&\cdots&\frac{Q}{N} &I\\
    I& \mathbf{0}&\mathbf{0} &\cdots\ &\cdots &\mathbf{0} &I\\
\end{bmatrix},~\Delta H_1=\begin{bmatrix}
\mathbf{0} &\mathbf{0} &\mathbf{0}&\cdots&\mathbf{0}&\mathbf{0}\\
\mathbf{0} &\mathbf{0} &\mathbf{0} &\cdots&\mathbf{0}&\mathbf{0}\\
\vdots & \vdots & \vdots & \vdots &\vdots &\vdots\\
\mathbf{0} & \mathbf{0} & \mathbf{0}&\cdots& {\color{blue}{I}}&\mathbf{0}\\
\mathbf{0} &\mathbf{0} &\mathbf{0}&\cdots&\mathbf{0}&\ \mathbf{0}\\
\mathbf{0}&\mathbf{0} &\mathbf{0}&\cdots &\mathbf{0} &\mathbf{0}\\
\end{bmatrix}.
\end{equation}
Fig. \ref{fig1x} depicts the sparsity patterns of matrices $\tilde{H}$, $\mathbb{H}$ and $P_{\rm DE1}$ with $N=12, M_x=M_y=8$.
\begin{figure}[ht]
\centering
\includegraphics[width=2.05in,height=1.8in]{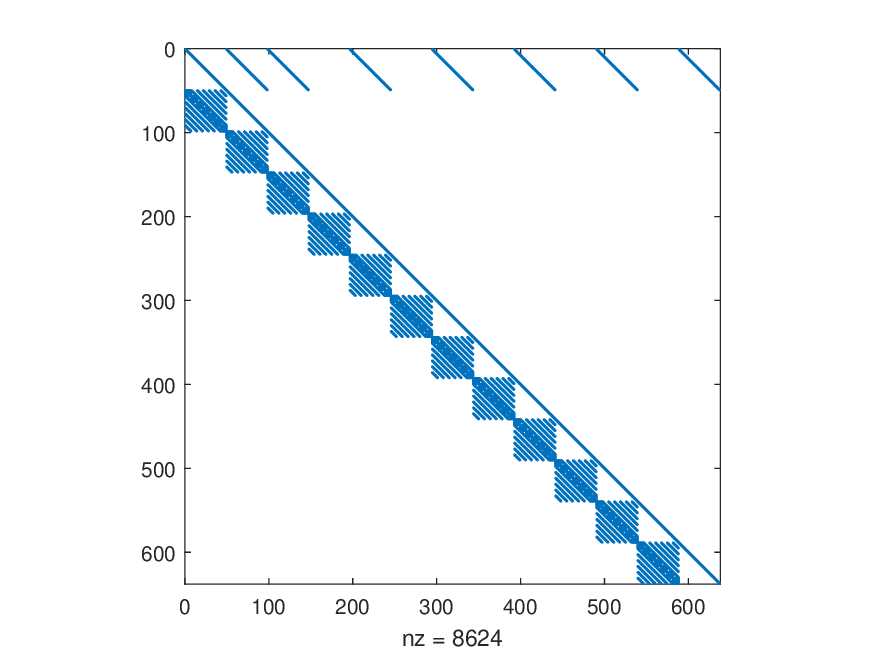}
\includegraphics[width=2.05in,height=1.8in]{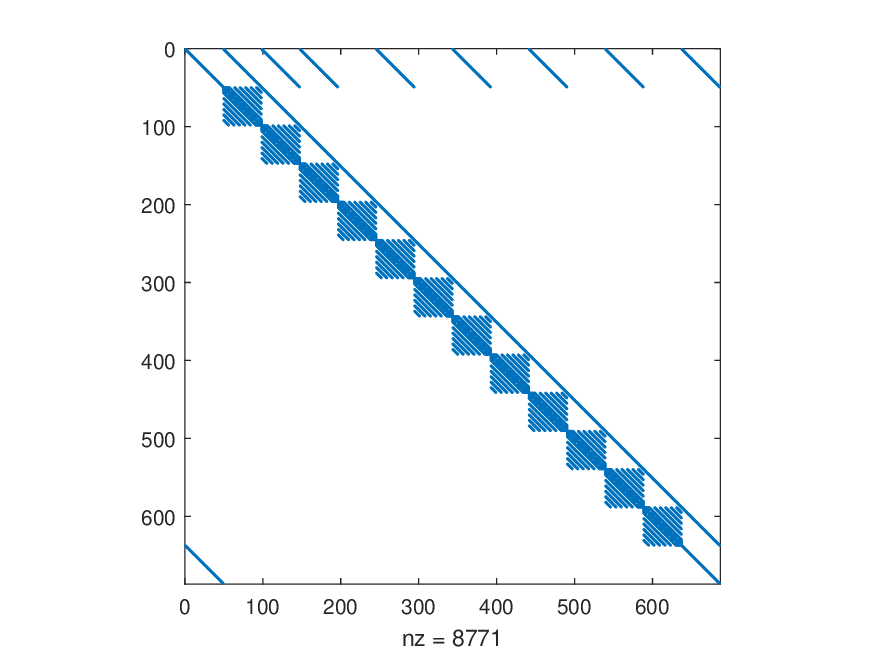}
\includegraphics[width=2.05in,height=1.8in]{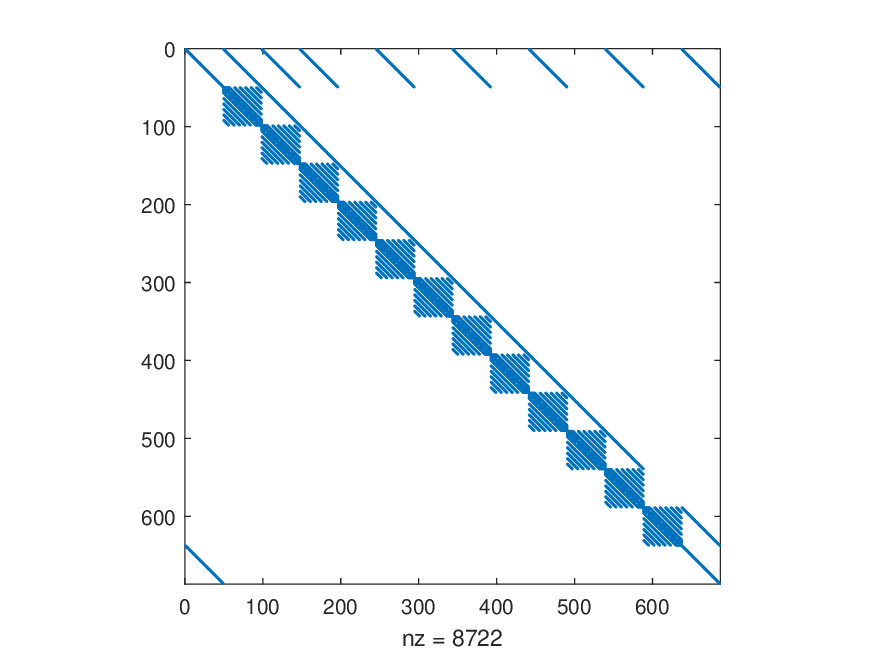}
\caption{Sparsity pattern plots of the coefficient matrix $\tilde{H}$ ($\beta_1=0,~\beta_2=1$) in (\ref{10yue30_1}) (left), the augmented matrix $\mathbb{H}$ (middle) and the preconditioner $P_{\rm DE1}$ (right).}
\label{fig1x}
\end{figure}

To further understand the eigenvalue distributions of the preconditioned matrix $P_{\rm DE1}^{-1}\mathbb{H}$, we provide the following preliminary lemma.
\begin{lemma}
	\label{lemma1}
For any nonsingular matrix $Q$, the inverse of
\[\hat{Q}=\begin{bmatrix}
Q&I\\
&\frac{Q}{2}&I\\
&&\ddots&\ddots\\
&&&\frac{Q}{N-2}&I\\
&&&&\frac{Q}{N-1}
\end{bmatrix}\]
has the form
\begin{equation}
\hat{Q}^{-1}=\begin{bmatrix}
*&*&*&s_1\\
&*&*&s_2\\
&&\ddots&\vdots\\
&&&s_{N-1}\\
\end{bmatrix},
\label{eq4.5x}
\end{equation}
where $\sigma_0=NQ^{-1}, \sigma_1=(N-1)Q^{-1}, \sigma_2=(N-2)Q^{-1}, \cdots, \sigma_{N-1}=Q^{-1}$
and
\[s_1=(-1)^{N-2}\sigma_1\sigma_2\cdots\sigma_{N-1},\
s_2=(-1)^{N-3}\sigma_1\sigma_2\cdots\sigma_{N-2},\cdots,s_{N-2}=-\sigma_1\sigma_2,\ s_{N-1}=\sigma_1,\ s_{N}=\sigma_0.\]
\end{lemma}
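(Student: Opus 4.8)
The plan is to compute $\hat Q^{-1}$ by exploiting the bidiagonal block structure of $\hat Q$, solving the equation $\hat Q X = I$ column-block by column-block. Writing $\hat Q$ as an upper block-bidiagonal matrix with diagonal blocks $\frac{Q}{j}$ ($j=1,\dots,N-1$, reindexed so that the $i$-th diagonal block is $Q/i$ — note the statement's $\sigma_i=(N-i)Q^{-1}$ bookkeeping) and superdiagonal blocks all equal to $I$, the key observation is that the last block-column of the inverse is the easiest to pin down: if $X=\hat Q^{-1}$ has block columns $X_{\cdot,1},\dots,X_{\cdot,N-1}$, then $\hat Q X_{\cdot,N-1}=e_{N-1}\otimes I$ forces, by back-substitution from the bottom row upward, the entries $s_{N-1},s_{N-2},\dots,s_1$ claimed in the statement. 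First I would set up this back-substitution recursion explicitly: the bottom block row gives $\frac{Q}{N-1}(X_{\cdot,N-1})_{N-1}=I$, so $(X_{\cdot,N-1})_{N-1}=(N-1)Q^{-1}=\sigma_1$; the next row up gives $\frac{Q}{N-2}(X_{\cdot,N-1})_{N-2}+(X_{\cdot,N-1})_{N-1}=0$, so $(X_{\cdot,N-1})_{N-2}=-(N-2)Q^{-1}\sigma_1=-\sigma_2\sigma_1$; and inductively $(X_{\cdot,N-1})_{N-1-m}=(-1)^m\sigma_{m+1}\sigma_m\cdots\sigma_1$, which is exactly the pattern $s_1,\dots,s_{N-1}$ (after matching indices and using that the $\sigma_i$ commute since they are all scalar multiples of $Q^{-1}$).

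Next I would argue that the remaining block columns $X_{\cdot,1},\dots,X_{\cdot,N-2}$ are supported on and above the diagonal — i.e. the whole inverse is block upper triangular, which accounts for the $*$'s in the claimed form \eqref{eq4.5x}. This is immediate: $\hat Q$ is block upper triangular with invertible diagonal blocks, and the inverse of such a matrix is again block upper triangular (one checks by the same back-substitution that solving $\hat Q X_{\cdot,k}=e_k\otimes I$ produces zero blocks in rows $k+1,\dots,N-1$, since those rows of $\hat Q$ involve only diagonal and superdiagonal blocks already forced to annihilate). The entries above the diagonal can be written down explicitly by the analogous telescoping recursion, but for the purposes of Lemma~\ref{lemma1} only their location (not their value) matters, so I would simply denote them by $*$ as in the statement. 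The commutativity of all the blocks $\frac{Q}{j}$ and $Q^{-1}$ — everything is a rational-function multiple of the single matrix $Q$ — is what makes every product that appears well defined regardless of ordering, and I would state this commutativity once at the outset.

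The only genuinely fiddly point, and the one I would be most careful with, is the index bookkeeping: the matrix $\hat Q$ in the statement is $(N-1)\times(N-1)$ in block size with diagonal blocks $Q,\frac{Q}{2},\dots,\frac{Q}{N-1}$, while the listed quantities $\sigma_0,\dots,\sigma_{N-1}$ and $s_1,\dots,s_N$ run over $N+1$ and $N$ values respectively — so $\sigma_0=NQ^{-1}$ and $s_N=\sigma_0$ evidently refer to the diagonal block that appears when $\hat Q$ is embedded as a submatrix of the full $\mathbb H$ (or $\tilde H$), i.e. they are needed in the subsequent computation of $P_{\rm DE1}^{-1}\mathbb H$ rather than inside $\hat Q^{-1}$ itself. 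I would reconcile this by stating the lemma's conclusion for the genuine inverse of the displayed $\hat Q$ (whose last block column is $(s_1,\dots,s_{N-1})^{\top}$, ending in $s_{N-1}=\sigma_1=Q^{-1}$), and then noting separately that the extra entries $s_N=\sigma_0=NQ^{-1}$ and the scaling chain $\sigma_0,\sigma_1,\dots,\sigma_{N-1}$ arise when this block is padded by one further diagonal block $Q/N$ in the application. Apart from this indexing care, the proof is a routine block back-substitution with no analytic content; the ``hard part'' is purely notational consistency.
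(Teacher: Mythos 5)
Your proposal is correct and in substance the same as the paper's argument: the paper factors $\hat{Q}$ into a block-diagonal matrix times a chain of elementary block matrices with off-diagonal blocks $\sigma_2,\dots,\sigma_{N-1}$ and multiplies the inverse factors, which is precisely the block back-substitution recursion you run on $\hat{Q}X=I$ (the two computations produce the identical last block column $s_1,\dots,s_{N-1}$, with commutativity of the $\sigma_i$ handling the ordering). Your bookkeeping remark that $\sigma_0$ and $s_N=\sigma_0=NQ^{-1}$ are not entries of $\hat{Q}^{-1}$ itself but come from the extra diagonal block $\frac{Q}{N}$ in the application is also consistent with how the paper uses $\sigma_0=s_N$ in the proof of Theorem~\ref{th5}.
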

\begin{proof} Since
 \[\hat{Q}=\begin{bmatrix}
Q&&&\\
&\frac{Q}{2}&&\\
&&\ddots&\\
&&&\frac{Q}{N-1}\\
\end{bmatrix}
\begin{bmatrix}
I&&&\\
&\ddots&&\\
&&I&\sigma_2\\
&&&I\\
\end{bmatrix}
\begin{bmatrix}
I&&&\\
&\ddots&&\\
&&I&\sigma_3\\
&&&I\\
&&&&I\\
\end{bmatrix}\cdots
\begin{bmatrix}
I&\sigma_{N-1}&&\\
&\ddots&&\\
&&I\\
&&&I\\
\end{bmatrix},
\]
we have
 \[\hat{Q}^{-1}=
 \begin{bmatrix}
I&-\sigma_{N-1}&&\\
&\ddots&&\\
&&I\\
&&&I\\
\end{bmatrix}\cdots
\begin{bmatrix}
I&&&\\
&\ddots&&\\
&&I&-\sigma_2\\
&&&I\\
\end{bmatrix}
\begin{bmatrix}
Q^{-1}&&&\\
&2Q^{-1}&&\\
&&\ddots&\\
&&&(N-1)Q^{-1}\\
\end{bmatrix},
\]
which immediately gives Eq. (\ref{eq4.5x})
that completes the proof.  
\end{proof}

Theorem~\ref{th4} uses the matrix $\hat{Q}$ given in Lemma~\ref{lemma1} to produce a factorization that provides a practical implementation of the preconditioner $P_{\rm DE1}$.
\begin{theorem}	\label{th4}
The preconditioner $P_{\rm DE1}$ can be factorized as $P_{\rm DE1}=\Delta_1\Delta_2\Delta_3\Delta_4$, where
\[
\Delta_1=\begin{bmatrix}
	I&\mathbf{0}&\cdots&\mathbf{0}&I+B_N(0)I\\
	&   \ddots & &&\mathbf{0}\\
    &&   \ddots & &\vdots\\
    && &I &I\\
  & & &&I\\
\end{bmatrix},~\Delta_2=\begin{bmatrix}
	-B_{N}(0)I&\\
	&\hat{Q}\\
    & &\frac{Q}{N} &\\
    &  & & I\\
\end{bmatrix},\]
\[\Delta_3=\begin{bmatrix} I&\frac{-B_0(0)I}{B_N(0)}&\cdots&\frac{-B_{N-2}(0)I}{B_N(0)}&\mathbf{0}&\mathbf{0}\\
	&  I &  &&&\\
	&  &\ddots& &&\\
    &&  & I & &\\
   &   & &&I&\\
    & && & &I\\
\end{bmatrix},
~\Delta_4=\begin{bmatrix}
	I&\\
	&   \ddots&\\
    -NQ^{-1}&&   I\\
    I&& &I\\
\end{bmatrix},\]
and hence $P_{\rm DE1}$ is nonsingular if and only if $Q$ is nonsingular.
 \end{theorem}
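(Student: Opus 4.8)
\emph{Proof sketch.} The plan is to verify the asserted identity $P_{\rm DE1}=\Delta_1\Delta_2\Delta_3\Delta_4$ by a direct block-matrix multiplication, and then to read off the nonsingularity claim from the structure of the four factors. Three of them are unit block-triangular with their nonzero off-diagonal blocks confined to a single band: $\Delta_4$ has such blocks only in its first block-column ($-NQ^{-1}$ and $I$), $\Delta_3$ only in its first block-row, and $\Delta_1$ only in its last block-column; the remaining factor $\Delta_2$ is block-diagonal, carrying $-B_N(0)I$, then the bidiagonal matrix $\hat Q$ of Lemma~\ref{lemma1} in block positions $2,\dots,N$, then $Q/N$, then $I$. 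I would multiply the factors from the right.

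First, $\Delta_3\Delta_4$ is immediate: the first block-row of $\Delta_3$ is zero in precisely the two positions where $\Delta_4$ carries its off-diagonal blocks, so that row is unchanged, and the only other effect is that block-rows $N+1$ and $N+2$ of the product inherit the entries $-NQ^{-1}$ and $I$ in their first block-column. Second, left-multiplying by $\Delta_2$: the first block-row gets scaled by $-B_N(0)I$, which converts each $-B_j(0)I/B_N(0)$ into $B_j(0)I$; block-rows $2,\dots,N$, being standard basis rows, are combined through the superbidiagonal $\hat Q$, so block-row $i$ acquires $\tfrac{Q}{i-1}$ in column $i$ and $I$ in column $i+1$ for $i<N$, and $\tfrac{Q}{N-1}$ alone in column $N$ for $i=N$ — that is, the blue $\mathbf 0$ — because $\hat Q$ terminates there; and block-row $N+1$ gets multiplied by $Q/N$, where the cancellation $\tfrac{Q}{N}\,(-NQ^{-1})=-I$ removes the inverse and produces $[-I,\ \mathbf 0,\dots,\mathbf 0,\ Q/N,\ \mathbf 0]$. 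Third, left-multiplying by $\Delta_1$ affects only block-rows $1$ and $N+1$: block-row $1$ adds $(I+B_N(0)I)$ times the current block-row $N+2$ (which is $[I,\mathbf 0,\dots,\mathbf 0,I]$), turning its leading $-B_N(0)I$ into $I$ and its trailing $\mathbf 0$ into $I+B_N(0)I$; and block-row $N+1$ adds that same block-row $N+2$, turning its leading $-I$ into $\mathbf 0$ and producing $I$ in the final slot. Matching the resulting first block-row with that of $P_{\rm DE1}$ uses $B_0(0)=1$ together with $B_{N-1}(0)=0$, which holds because $N$ is even (odd-index Bernoulli numbers vanish); every other block coincides by inspection.

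For the nonsingularity assertion, $\Delta_1$, $\Delta_3$, $\Delta_4$ are unit block-triangular, hence invertible, so $P_{\rm DE1}$ is nonsingular if and only if $\Delta_2$ is. Since $\Delta_2=\mathrm{diag}(-B_N(0)I,\ \hat Q,\ Q/N,\ I)$ and $B_N(0)\neq 0$ (again because $N$ is even), this reduces to the nonsingularity of $\hat Q$ and of $Q/N$. Lemma~\ref{lemma1} provides $\hat Q^{-1}$ explicitly in terms of $Q^{-1}$; equivalently, $\hat Q$ is block upper-triangular with diagonal blocks $Q,Q/2,\dots,Q/(N-1)$, so $\det\hat Q=(\det Q)^{N-1}/((N-1)!)^{\hat k}$. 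Consequently $\Delta_2$, and therefore $P_{\rm DE1}$, is nonsingular exactly when $Q$ is. Since the displayed factorization presupposes $Q$ nonsingular ($\Delta_4$ involves $Q^{-1}$), one can alternatively obtain the same conclusion for every $Q$ by a block-cofactor expansion of $P_{\rm DE1}$ along its nearly block-triangular part, which yields $\det P_{\rm DE1}=\bigl(-B_N(0)/N!\bigr)^{\hat k}(\det Q)^{N}$.

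The argument is a verification rather than a discovery, and the only real difficulty is the block bookkeeping across four factors — keeping straight the extra leading coordinate (the $-\mathbf{U}^N$ block), the exact block range occupied by $\hat Q$, and the position of the isolated $Q/(N-1)$ block (the blue $\mathbf 0$) — so that the superdiagonal $I$'s, the $Q/N$ block, and the cancelled $Q^{-1}$ all land in the correct slots, together with the use of $B_{N-1}(0)=0$ to finish the first block-row.
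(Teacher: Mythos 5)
Your verification is correct and follows the same route as the paper, whose proof simply states that the factorization ``can be verified directly''; your block-by-block multiplication is exactly that verification, and your bookkeeping (the scaling of the first block-row by $-B_N(0)I$, the cancellation $\tfrac{Q}{N}(-NQ^{-1})=-I$, and the final corrections from $\Delta_1$) all check out. Your two added observations are also sound refinements the paper leaves implicit: the match of the first block-row genuinely needs $B_{N-1}(0)=0$, i.e.\ even $N\geq 4$ (for $N=2$ one has $B_1(0)=-\tfrac12\neq 0$ and the identity would fail), and since $\Delta_4$ contains $Q^{-1}$ the ``only if'' part of the nonsingularity claim indeed requires an argument valid for singular $Q$, such as the polynomial identity $\det P_{\rm DE1}=\bigl(-B_N(0)/N!\bigr)^{\hat k}(\det Q)^{N}$, which is correct.
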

\begin{proof} The assertion can be verified directly.  
\end{proof}

Using Lemma~\ref{lemma1} and Theorem~\ref{th4}, we can obtain the following result about the eigenvalue distributions for the preconditioned matrix $P_{\rm DE1}^{-1}\mathbb{H}$.
\begin{theorem}	\label{th5}
The preconditioned matrix  $P_{\rm DE1}^{-1}\mathbb{H}$ has an eigenvalue equal to $1$ with multiplicity at least $(N+1)\hat{k}$, and the remaining $\hat{k}$ eigenvalues are  $1+\lambda$, where $\lambda$ is the eigenvalue of
\[T=s_N\delta_1
\]
with $\delta_1=\frac{1}{B_N(0)}(B_0(0)s_1+B_1(0)s_2+\cdots+B_{N-2}(0)s_{N-1})$. Hence, when
\[||\sigma_0||=||NQ^{-1}||<\frac{-1+\sqrt{1+4\tau}}{2\tau},\] all the remaining eigenvalues $1+\lambda$ satisfy $0<|1+\lambda|<2$, where $\tau=\max_{0\leq i\leq N-2}\left|\frac{B_i(0)}{B_{N}(0)}\right|$, and $Q$ is defined in Eq.~(\ref{10yue29_1}).
\end{theorem}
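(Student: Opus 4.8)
The plan is to exploit the factorization $P_{\rm DE1}=\Delta_1\Delta_2\Delta_3\Delta_4$ from Theorem~\ref{th4} to compute $P_{\rm DE1}^{-1}\mathbb{H}$ explicitly, and then read off its spectrum. First I would write $\mathbb{H}=P_{\rm DE1}+\Delta H_1$, so that $P_{\rm DE1}^{-1}\mathbb{H}=I+P_{\rm DE1}^{-1}\Delta H_1$. Since $\Delta H_1$ has a single nonzero block (the blue $I$ in one off-diagonal position, in row-block corresponding to the second-to-last block row and the last-but-one column as highlighted in Eq.~(\ref{10yue30_4})), the product $P_{\rm DE1}^{-1}\Delta H_1$ has rank at most $\hat k$; its nonzero columns occupy a single block-column. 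Hence $P_{\rm DE1}^{-1}\mathbb{H}$ differs from the identity only in one block-column, which immediately forces the eigenvalue $1$ with multiplicity at least $(N+1)\hat k$ (the kernel of $P_{\rm DE1}^{-1}\Delta H_1$ has block-dimension $N+1$), and the remaining $\hat k$ eigenvalues are $1+\lambda$ where $\lambda$ ranges over the eigenvalues of the single nonzero diagonal block $T$ of $P_{\rm DE1}^{-1}\Delta H_1$.

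The core computation is to identify that block $T$. Here I would use the explicit inverses $\Delta_4^{-1},\Delta_3^{-1},\Delta_2^{-1},\Delta_1^{-1}$ — each is block-triangular or block-diagonal with easily inverted blocks — and track the propagation of the single $I$ block in $\Delta H_1$ through these four multiplications. The key structural input is Lemma~\ref{lemma1}: $\Delta_2^{-1}$ contains $\hat Q^{-1}$, whose last block-column is $(s_1,s_2,\dots,s_{N-1})^\top$ and whose $(1,1)$-type reduction yields the scalars $\sigma_0=NQ^{-1}$. Pushing the nonzero block of $\Delta H_1$ first through $\Delta_1^{-1}$ (which only mixes the last block-column into earlier rows), then through $\Delta_2^{-1}$ (where $\hat Q^{-1}$ and the $\sigma$'s enter, producing the $s_j$ factors), then through $\Delta_3^{-1}$ (which contributes the $B_i(0)/B_N(0)$ coefficients, assembling $\delta_1$), and finally through $\Delta_4^{-1}$ (contributing the $-NQ^{-1}$ coupling), one arrives at $T=s_N\delta_1$ with $\delta_1=\frac{1}{B_N(0)}\bigl(B_0(0)s_1+B_1(0)s_2+\cdots+B_{N-2}(0)s_{N-1}\bigr)$. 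I expect this bookkeeping — correctly chaining the four block-triangular inverses and verifying that all the cross-terms collapse to exactly this expression — to be the main obstacle; it is mechanical but error-prone, and the sign conventions in $s_j=(-1)^{N-1-j}\sigma_1\cdots\sigma_j$ must be handled carefully.

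Finally, for the bound on the remaining eigenvalues I would estimate $\|T\|$. Using $s_N=\sigma_0$ and $s_j=(-1)^{N-1-j}\sigma_1\sigma_2\cdots\sigma_j$ with $\sigma_i=(N-i)Q^{-1}$, each $\|s_j\|\le\|\sigma_0\|^{\,j}$ (since $\|\sigma_i\|=\frac{N-i}{N}\|\sigma_0\|\le\|\sigma_0\|$ for $i\ge1$, noting $\sigma_1\cdots\sigma_j$ has $j$ factors each of norm $\le\|\sigma_0\|$), so
\[
\|\delta_1\|\le\frac{1}{|B_N(0)|}\sum_{i=0}^{N-2}|B_i(0)|\,\|s_{i+1}\|\le\tau\sum_{i=1}^{N-1}\|\sigma_0\|^{\,i}\le\frac{\tau\|\sigma_0\|}{1-\|\sigma_0\|}
\]
when $\|\sigma_0\|<1$, and hence $\|T\|\le\|\sigma_0\|\cdot\|\delta_1\|\le\frac{\tau\|\sigma_0\|^2}{1-\|\sigma_0\|}$. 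Imposing $\frac{\tau\|\sigma_0\|^2}{1-\|\sigma_0\|}<1$, i.e. $\tau\|\sigma_0\|^2+\|\sigma_0\|-1<0$, gives exactly $\|\sigma_0\|<\frac{-1+\sqrt{1+4\tau}}{2\tau}$, under which every eigenvalue $\lambda$ of $T$ has $|\lambda|<1$, so the remaining eigenvalues $1+\lambda$ satisfy $|1+\lambda|<2$; and $1+\lambda\ne0$ because $P_{\rm DE1}^{-1}\mathbb{H}$ is nonsingular (both factors are, by Theorem~\ref{th4} and the assumed nonsingularity of $\mathbb{H}$). This yields $0<|1+\lambda|<2$, completing the argument.
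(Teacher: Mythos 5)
Your proposal follows essentially the same route as the paper: write $P_{\rm DE1}^{-1}\mathbb{H}=I+P_{\rm DE1}^{-1}\Delta H_1$, push the single nonzero block of $\Delta H_1$ through the factorization of Theorem~\ref{th4} using Lemma~\ref{lemma1} to see that the perturbation lives in one block column with diagonal block $T=s_N\delta_1=\sigma_0\delta_1$, and then bound $\|T\|\leq \tau\|\sigma_0\|^2/(1-\|\sigma_0\|)<1$ under the stated condition, exactly as in the paper's proof. Your only slip is writing $s_j=(-1)^{N-1-j}\sigma_1\cdots\sigma_j$ instead of the lemma's $s_j=(-1)^{N-1-j}\sigma_1\cdots\sigma_{N-j}$, which is harmless here since the norm estimate only uses the sum $\sum_{j=1}^{N-1}\|s_j\|\leq\sum_{k=1}^{N-1}\|\sigma_0\|^k$, identical under either indexing.
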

\begin{proof} By Theorem \ref{th4}, we have $P_{\rm DE1}=\Delta_1\Delta_2\Delta_3\Delta_4$.
With the help of Lemma~\ref{lemma1}, we have
\[\Delta_3^{-1}\Delta_2^{-1}=\begin{bmatrix}
*&*&\cdots&\delta_1&\mathbf{0}&\mathbf{0}\\
	&  * \ & \cdots &s_1&\vdots&\vdots\\
	&  &\ \ddots\ & \vdots&\vdots&\vdots\\
    &&  & s_{N-1} &\mathbf{0} &\mathbf{0}\\
   &   & &&*&\mathbf{0}\\\
    & && & &*\\
\end{bmatrix}\quad
{\rm and}
\quad \Delta_4^{-1}\Delta_3^{-1}\Delta_2^{-1}=\begin{bmatrix}
*&*&\cdots&\delta_1&\mathbf{0}&\mathbf{0}\\
	&  * & \cdots &s_1&\vdots&\vdots\\
	&  &\ddots& \vdots&\vdots&\vdots\\
    &&  & s_{N-1} &\mathbf{0} &\mathbf{0}\\
   *&  * &\cdots &\sigma_0\delta_1&*&\mathbf{0}\\
    *&* &\cdots& -\delta_1&\mathbf{0} &*\\
\end{bmatrix}.\]
Moreover, it is evident that $\Delta_1^{-1}\Delta H_1=\Delta H_1$, from which we deduce that
\begin{equation}\label{12yue22_1}
P_{\rm DE1}^{-1}\Delta H_1
=\Delta_4^{-1}\Delta_3^{-1}\Delta_2^{-1}\Delta_1^{-1}\Delta H_1=
\begin{bmatrix}
\mathbf{0}&\mathbf{0}&\cdots&\mathbf{0}&\delta_1&\mathbf{0}\\
	\vdots&  \mathbf{0} \ &\cdots &\mathbf{0} &s_1&\vdots\\
	\vdots&  &\ \ddots\ & \vdots&\vdots\\
     \mathbf{0}& \mathbf{0}& \cdots &\mathbf{0}& s_{N-1} &\mathbf{0} \\
   \mathbf{0}&  \mathbf{0} &\cdots &\mathbf{0}&\sigma_0\delta_1&\mathbf{0}\\
    \mathbf{0}&\mathbf{0} &\cdots& \mathbf{0}&-\delta_1&\mathbf{0} \\
\end{bmatrix}.
\end{equation}
and hence, we know that $\lambda$ is the eigenvalue of $T$ noting $\sigma_0=s_N$.

Next, since $||\sigma_i||\leq ||\sigma_0||~(i=1,2,\cdots,N-1)$, we have
$||s_Ns_i||\leq ||\sigma_0||^{N-i+1}$. Consequently,
when $||\sigma_0||<\frac{-1+\sqrt{1+4\tau}}{2\tau}$, there holds
\begin{equation*}
\begin{split}
\|T\|&\leq\tau (\|s_Ns_1\|+\|s_Ns_1\|+\cdots+\|s_Ns_{N-1}\|)\\
&\leq\tau(\|\sigma_0\|^2+\|\sigma_0\|^3+\cdots+\|\sigma_0\|^N)
<\frac{\tau \|\sigma_0\|^2}{1-\|\sigma_0\|}<1,
\end{split}
\end{equation*}
which leads to $|\lambda|<1$, and $0<|1+\lambda|<2$. This completes the proof.   
\end{proof}

Fig.~\ref{fig2} depicts the eigenvalue distributions of $P_{\rm DE1}^{-1}\mathbb{H}$ and $\mathbb{H}$, where $\mathbb{H}$ is derived from Example~3 in Section~\ref{sec:numeri}.

\begin{figure}[ht]
\centering
\includegraphics[width=2.75in,height=2.0in]{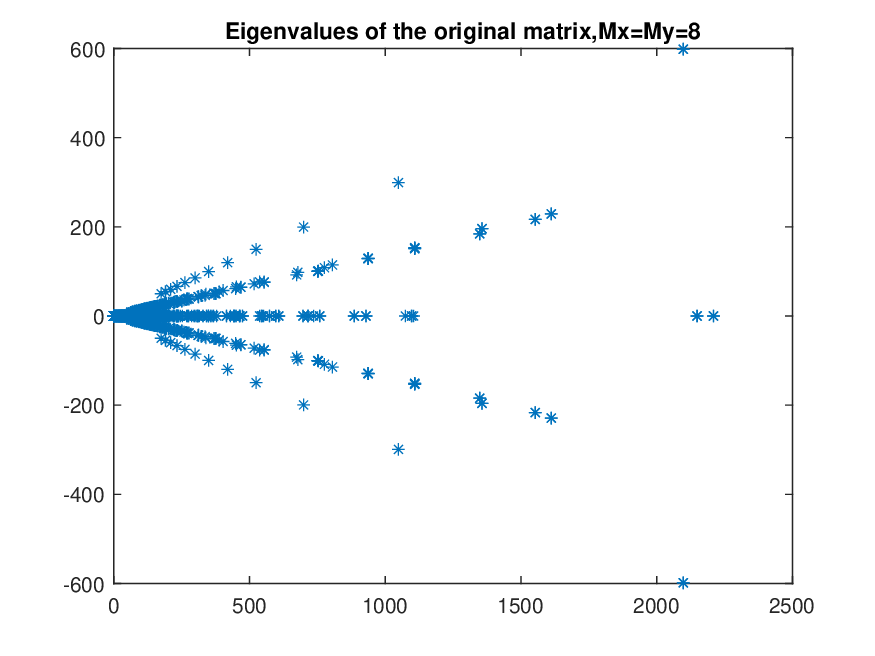}
\includegraphics[width=2.75in,height=2.0in]{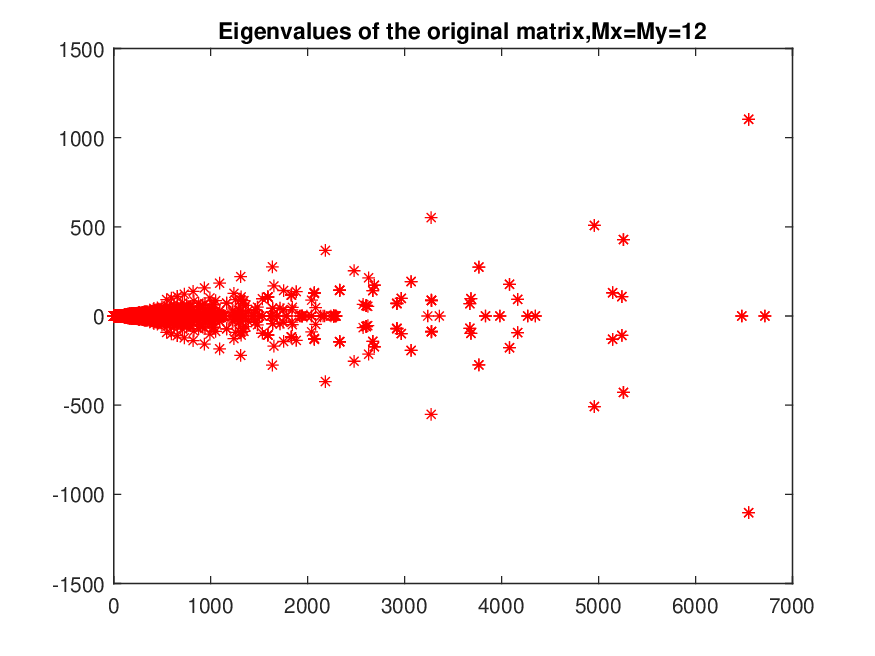}
\includegraphics[width=2.75in,height=2.0in]{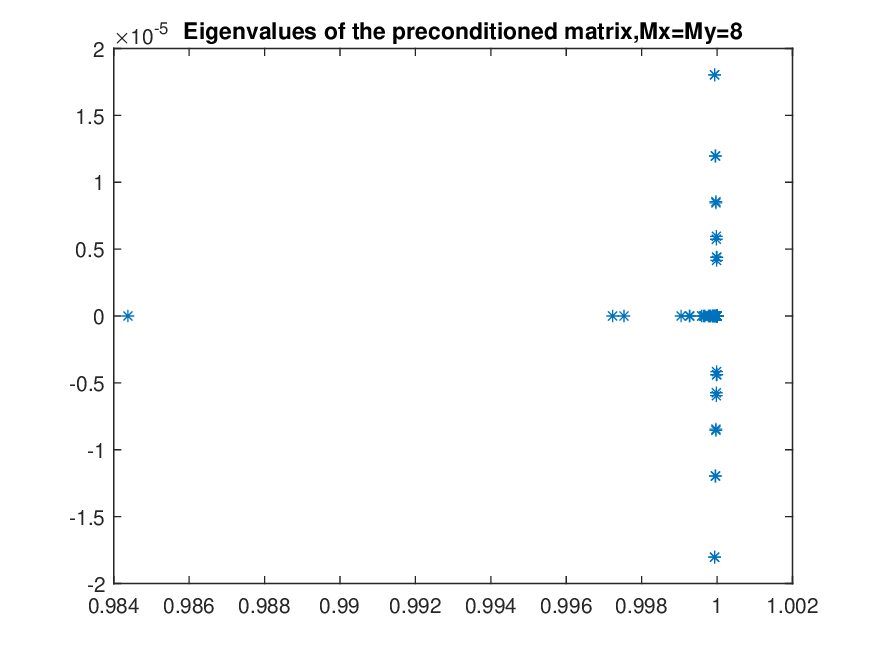}
\includegraphics[width=2.75in,height=2.0in]{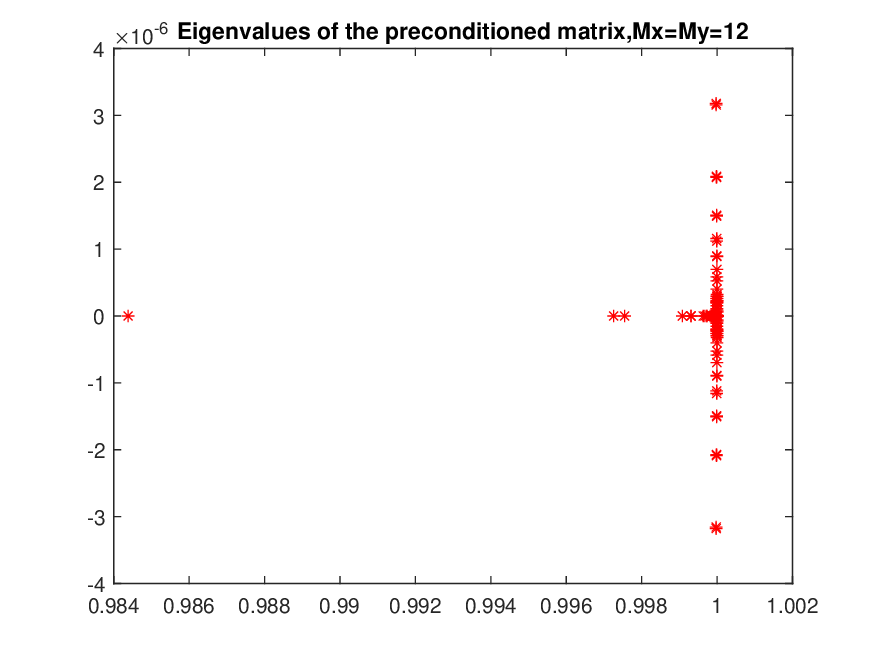}
\caption{Eigenvalue distributions of the preconditioned matrix $P_{\rm DE1}^{-1}\mathbb{H}$ and original matrix $\mathbb{H}$ in Example 3
	under $8\times 8$ and $12\times 12$ uniform grids in space, respectively.}
	\label{fig2}
\end{figure}

When $P_{\rm DE1}$ is used as a preconditioner for a Krylov subspace method like GMRES for solving Eq.~(\ref{10yue30_3}), it will provide a $(\hat{k}+1)$-dimensional search space  $\mathcal{K}(P_{\rm DE1}^{-1}\mathbb{H},$
$P_{\rm DE1}^{-1}{\bm r})$ for the solution, and hence fast convergence may be expected. The result is illustrated in Theorem~\ref{th6} below.
\begin{theorem}The degree of the minimal polynomial of the preconditioned matrix $P_{\rm DE1}^{-1}\mathbb{H}$ is at most $\hat{k}+1$. Hence, the dimension of the Krylov subspace $\mathcal{K}(P_{\rm DE1}^{-1}\mathbb{H},
	~{\bm b})$, for a given vector ${\bm b}$, is at most $\hat{k}+1$.
	\label{th6}
\end{theorem}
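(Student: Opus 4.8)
The plan is to exploit the explicit formula for $P_{\rm DE1}^{-1}\Delta H_1$ already computed in the proof of Theorem~\ref{th5}, namely~\eqref{12yue22_1}. Write $P_{\rm DE1}^{-1}\mathbb{H}=I+M$ with $M:=P_{\rm DE1}^{-1}\Delta H_1$, which is legitimate since $\Delta H_1=\mathbb{H}-P_{\rm DE1}$. The first observation is that, in block form, $M$ is an outer product: all of its $N+2$ block columns vanish except the $(N+1)$-st, which equals the block column $\mathbf{v}=(\delta_1,\,s_1,\,\ldots,\,s_{N-1},\,\sigma_0\delta_1,\,-\delta_1)^\top$. Equivalently, acting on a block vector $\mathbf{x}=(\mathbf{x}_1,\ldots,\mathbf{x}_{N+2})^\top$ one has $M\mathbf{x}=\mathbf{v}\,\mathbf{x}_{N+1}$, i.e. $M=\mathbf{v}\,\mathbf{e}_{N+1}^\top$ where $\mathbf{e}_{N+1}$ is the block unit vector. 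In particular $M$ has block rank at most $\hat{k}$, which already recovers the eigenvalue count of Theorem~\ref{th5}.

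The key algebraic point is that the $(N+1)$-st block of $\mathbf{v}$ is precisely $\sigma_0\delta_1=s_N\delta_1=T$, the $\hat{k}\times\hat{k}$ matrix from Theorem~\ref{th5} (using $\sigma_0=s_N$ as noted there). Consequently the powers of $M$ collapse through $T$: a straightforward induction gives $M^{k}\mathbf{x}=\mathbf{v}\,T^{k-1}\mathbf{x}_{N+1}$ for all $k\ge1$, that is $M^{k}=\mathbf{v}\,T^{k-1}\mathbf{e}_{N+1}^\top$ in block notation. Now let $m(x)$ be the minimal polynomial of $T$, so $\deg m\le\hat{k}$. Using $M^{k}=\mathbf{v}\,T^{k-1}\mathbf{e}_{N+1}^\top$, a short computation shows the $(N+1)$-st block of $m(M)\mathbf{x}$ equals $m(T)\mathbf{x}_{N+1}=0$, hence $M\,m(M)=0$. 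Therefore the polynomial $x\,m(x)$, of degree at most $\hat{k}+1$, annihilates $M$, so the minimal polynomial of $M$ has degree at most $\hat{k}+1$; since $P_{\rm DE1}^{-1}\mathbb{H}=I+M$, its minimal polynomial is the translate by $1$ of that of $M$ and has the same degree, which proves the first assertion.

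For the second assertion I would invoke the standard fact that for any square matrix $A$ and vector $\mathbf{b}$ the Krylov subspace $\mathcal{K}(A,\mathbf{b})=\mathrm{span}\{\mathbf{b},A\mathbf{b},A^2\mathbf{b},\ldots\}$ has dimension at most the degree $d$ of the minimal polynomial of $A$: since $A^{d}$ is a linear combination of $I,A,\ldots,A^{d-1}$, we get $A^{d}\mathbf{b}\in\mathrm{span}\{\mathbf{b},\ldots,A^{d-1}\mathbf{b}\}$ and, inductively, so does every higher power. Applying this to $A=P_{\rm DE1}^{-1}\mathbb{H}$ with $d\le\hat{k}+1$ yields the stated bound, and in particular GMRES applied to~\eqref{10yue30_3} terminates in at most $\hat{k}+1$ steps.

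The only genuine obstacle I anticipate is the bookkeeping with the block structure of $M$: although $\mathbf{v}$ and $T$ do not commute, the powers $M^{k}$ nevertheless contract correctly because $M$ is a block rank-$\hat{k}$ outer product whose ``pivot'' block $\mathbf{e}_{N+1}^\top\mathbf{v}$ is exactly $T$. Once the identity $M^{k}=\mathbf{v}\,T^{k-1}\mathbf{e}_{N+1}^\top$ is pinned down precisely (and the indices in~\eqref{12yue22_1} double-checked), the remaining steps are routine.
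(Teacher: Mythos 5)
Your proof is correct, and it reaches the bound by a genuinely different route at the key step. Both arguments start from the explicit form of $P_{\rm DE1}^{-1}\Delta H_1$ in Eq.~(\ref{12yue22_1}), but the paper then writes down the characteristic polynomial $f(\lambda)=(\lambda-1)^{(N+1)\hat{k}}(\lambda-\lambda_1^*)\cdots(\lambda-\lambda_{\hat{k}}^*)$ and concludes that, since $f$ has at most $\hat{k}+1$ distinct roots, the minimal polynomial has degree at most $\hat{k}+1$, after which the Krylov bound follows from the standard result cited from Saad. You instead exhibit an explicit annihilating polynomial: writing $P_{\rm DE1}^{-1}\mathbb{H}=I+M$ with $M=\mathbf{v}\,\mathbf{e}_{N+1}^\top$ a block outer product whose pivot block $\mathbf{e}_{N+1}^\top\mathbf{v}$ is exactly $T=\sigma_0\delta_1$ from Theorem~\ref{th5}, you obtain $M^{k}=\mathbf{v}\,T^{k-1}\mathbf{e}_{N+1}^\top$ and hence that $x\,m_T(x)$ annihilates $M$, where $m_T$ is the minimal polynomial of $T$ with $\deg m_T\le\hat{k}$. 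This is more than a stylistic difference: in general the number of distinct eigenvalues only bounds the minimal polynomial degree from below (a nontrivial Jordan block raises the degree without adding distinct roots), so the paper's inference from ``at most $\hat{k}+1$ distinct roots'' is a shortcut that is not justified as stated, whereas your argument establishes the degree bound unconditionally from the block rank-$\hat{k}$ structure of $\mathbb{H}-P_{\rm DE1}$, with no assumption on the diagonalizability of $T$ or of the preconditioned matrix. The final step, bounding $\dim\mathcal{K}(P_{\rm DE1}^{-1}\mathbb{H},\bm{b})$ by the degree of the minimal polynomial, is the same standard fact in both proofs.
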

\begin{proof} By Eq.~(\ref{12yue22_1}),  we know that the characteristic polynomial $f(\lambda)$ of the preconditioned matrix
$P_{\rm DE1}^{-1}\mathbb{H}$ is
\[f(\lambda)=\det(\lambda I-P_{\rm DE1}^{-1}\mathbb{H})=(\lambda-1)^{(N+1)\hat{k}}\left(\lambda-\lambda_1^*\right)
\cdots\left(\lambda-\lambda_{\hat{k}}^*\right),\]
where $\lambda_i^*~(i=1,2,\cdots, \hat{k})$ are the eigenvalues of $I+\sigma_0\delta_1$. Therefore, it is clear that $f(\lambda)=0$ has at most $\hat{k}+1$ distinct roots $\lambda_i=\lambda^{*}_i,~i=1, 2, \dots, \hat{k},~\lambda_{\hat{k}+1}=1$, and this means that the degree of the minimal polynomial of $P_{\rm DE1}^{-1}\mathbb{H}$ is at most $\hat{k}+1$.
The proof of the theorem is thus completed by applying the result from~\cite{Saad}. 
\end{proof}
\subsection{Case 2: $\beta_1=1$ and $\beta_2=0$}
\label{sec:precon2}
 In this case, the coefficient matrix $H$ in Eq. (\ref{10yue5_5}) can be simplified as
 \begin{equation}\label{1yue19_2}
 H=I\otimes I_{\hat{k}}+P^2\otimes Q.
 \end{equation} 
Since the idea is very similar to that in Section~\ref{sec:precon1}, we only introduce the preconditioner for $H$ in (\ref{1yue19_2}), and all the conclusions, including the eigenvalue distributions of the preconditioned matrix, are omitted.
 
Since $P^2$ has the form
\[P^2=\begin{bmatrix}
\hat{p}_{1,1}&\hat{p}_{1,2}&\hat{p}_{1,3}&\cdots&\hat{p}_{1,N-1}&\hat{p}_{1,N}&0\\
\hat{p}_{2,1}&\hat{p}_{2,2}&\hat{p}_{2,3}&\cdots&\hat{p}_{2,N-1}&\hat{p}_{2,N}&0\\
\hat{p}_{3,1}&0&0&\cdots&0&0&0\\
0&\hat{p}_{4,2}&0&\ddots&\vdots&\vdots&\vdots\\
\vdots&\vdots&\ddots&\ddots&\ddots&\vdots&\vdots\\
\vdots&\vdots&\vdots&\ddots&\ddots&\ddots&\vdots\\
0&0&0&\cdots&\hat{p}_{N+1,N-1}&0&0\\
\end{bmatrix},\]
we know that the coefficient matrix $H$ in (\ref{1yue19_2}) has the structure
\[H=\begin{bmatrix}
I+\blacksquare_{1,1}&\blacksquare_{1,2}&\blacksquare_{1,3}&\cdots&\blacksquare_{1,N-1}
&\blacksquare_{1,N}&0\\
\blacksquare_{2,1}&I+\blacksquare_{2,2}&\blacksquare_{2,3}&\cdots&\blacksquare_{2,N-1}
&\blacksquare_{2,N}&0\\
\blacksquare_{3,1}&0&I&\cdots&0&0&0\\
0&\blacksquare_{4,2}&0&\ddots&0&0&0\\
\vdots&\vdots&\ddots&\ddots&\ddots&\vdots&\vdots\\
0&0&0&\ddots&0&I&0\\
0&0&0&\cdots&\blacksquare_{N+1,N-1}&0&I\\
\end{bmatrix},\]
where we use the symbol $\blacksquare_{i,j}=\hat{p}_{i,j}Q$ to denote different block $\hat{k}$-dimensional sub-matrices.

\begin{figure}[ht]
\centering
\includegraphics[width=2.05in,height=1.8in]{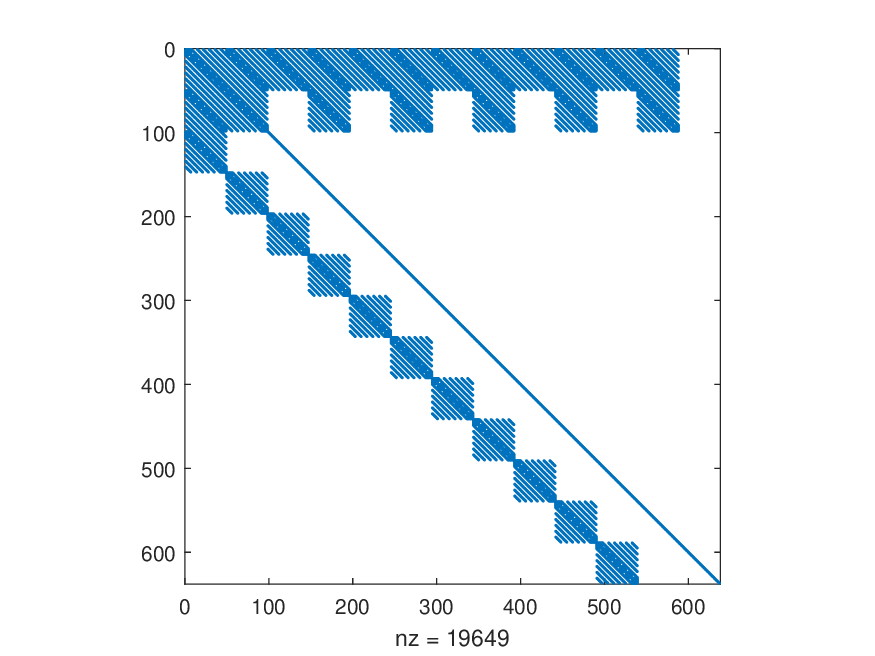}
\includegraphics[width=2.05in,height=1.8in]{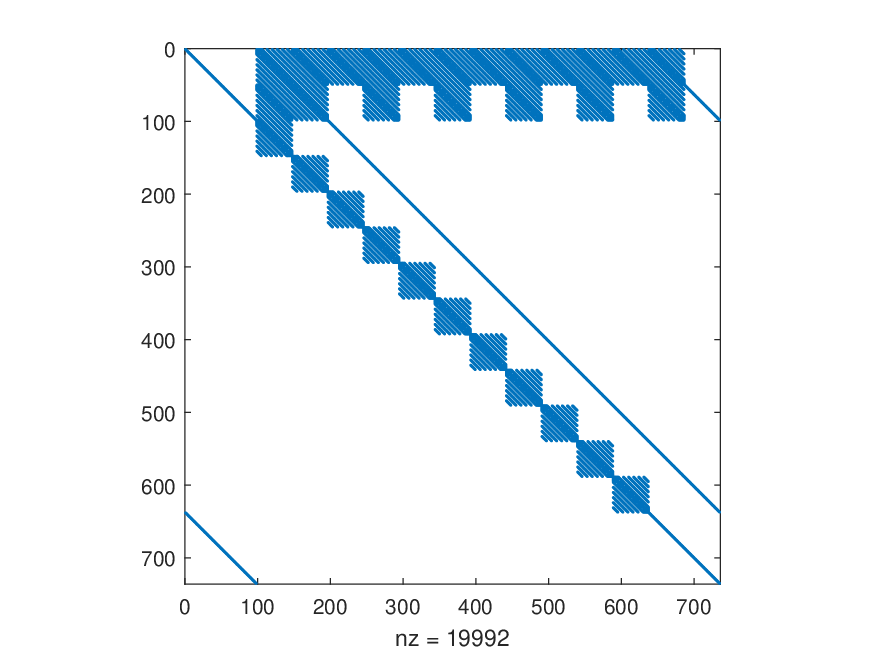}
\includegraphics[width=2.05in,height=1.8in]{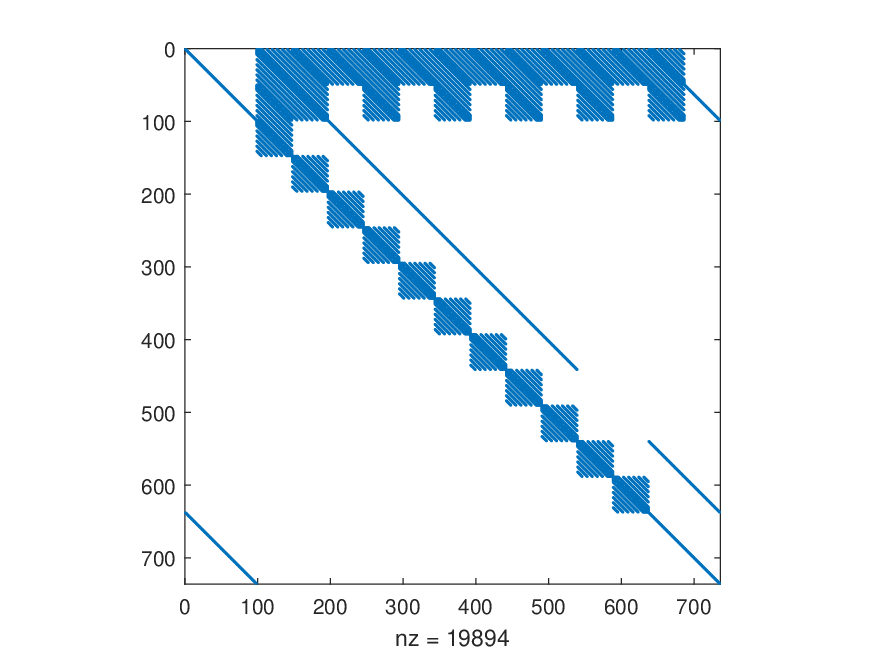}
\caption{Sparsity pattern plots of the coefficient matrix $H$ ($\beta_1=1,~\beta_2=0$) in (\ref{1yue19_2}) (left), the augmented matrix $\hat{\mathbb{H}}$ (middle) and the preconditioner $P_{\rm DE2}$ (right).}
	\label{fig3}
\end{figure}

Similar to Section \ref{sec:precon1}, we first augment (\ref{10yue5_5}) into
\begin{equation}\label{12yue29_4}
\mathbb{\hat{H}}\hat{\mathbb{U}}=\hat{\mathbb{R}},
\end{equation}
where $\hat{\mathbb{U}}=[-\mathbf{U}^{N-1},~-\mathbf{U}^{N},~\mathbf{U}^{0}, ~\mathbf{U}^{1},~\cdots,~\mathbf{U}^{N}]^\top$, and
\[\hat{\mathbb{H}}=\begin{bmatrix}
I&\mathbf{0}&I+\blacksquare_{1,1}&\blacksquare_{1,2}&\blacksquare_{1,3}&\cdots
&\blacksquare_{1,N-2}&\blacksquare_{1,N-1}&I+\blacksquare_{1,N}&\mathbf{0}\\
\mathbf{0}&I&\blacksquare_{2,1}&I+\blacksquare_{2,2}&\blacksquare_{2,3}&\cdots
&\blacksquare_{2,N-2}&\blacksquare_{2,N-1}&\blacksquare_{2,N}&I\\
\mathbf{0}&\mathbf{0}&\blacksquare_{3,1}&\mathbf{0}&I&\cdots&\mathbf{0}&
\mathbf{0}&\mathbf{0}&\mathbf{0}\\
\vdots&\vdots&\vdots&\ddots&\ddots&\ddots&\vdots&\vdots&\vdots&\vdots\\
\mathbf{0}&\mathbf{0}&\mathbf{0}&\mathbf{0}&\ddots&\ddots&{\color{blue}{I}}&
\mathbf{0}&\mathbf{0}&\mathbf{0}\\
\mathbf{0}&\mathbf{0}&\mathbf{0}&\mathbf{0}&\mathbf{0}&\ddots&\mathbf{0}
&{\color{blue}{I}}&\mathbf{0}&\mathbf{0}\\
\mathbf{0}&\mathbf{0}&\mathbf{0}&\mathbf{0}&\mathbf{0}&\ddots&\blacksquare_{N,N-2}
&\mathbf{0}&I&\mathbf{0}\\
\mathbf{0}&\mathbf{0}&\mathbf{0}&\mathbf{0}&\mathbf{0}&\cdots&0&\blacksquare_{N+1,N-1}
&\mathbf{0}&I\\
I&\mathbf{0}&\mathbf{0}&\mathbf{0}&\mathbf{0}&\ddots&\mathbf{0}&\mathbf{0}&I&\mathbf{0}\\
\mathbf{0}&I&\mathbf{0}&\mathbf{0}&\mathbf{0}&\cdots&\mathbf{0}&\mathbf{0}&\mathbf{0}&I\\
\end{bmatrix},
\hat{\mathbb{R}}=\begin{bmatrix}
R,\\ \mathbf{0} \\\mathbf{0}
\end{bmatrix}.\]
The two blue $I$ blocks in $\hat{\mathbb{H}}$ will be replaced with two blue \textbf{0} blocks in the preconditioner $P_{\rm DE2}$ given below, similar to the principle of preconditioner $P_{\rm DE1}$.
\begin{equation}\label{12yue29_5x}
P_{\rm DE2}=\begin{bmatrix}
I&0&I+\blacksquare_{1,1}&\blacksquare_{1,2}&\blacksquare_{1,3}&\cdots
&\blacksquare_{1,N-2}&\blacksquare_{1,N-1}&I+\blacksquare_{1,N}&0\\
0&I&\blacksquare_{2,1}&I+\blacksquare_{2,2}&\blacksquare_{2,3}&\cdots
&\blacksquare_{2,N-2}&\blacksquare_{2,N-1}&\blacksquare_{2,N}&I\\
0&0&\blacksquare_{3,1}&0&I&\cdots&0&0&0&0\\
\vdots&\vdots&\vdots&\ddots&\ddots&\ddots&\vdots&\vdots&\vdots&\vdots\\
0&0&0&0&\ddots&\ddots&{\color{blue}{0}}&0&0&0\\
0&0&0&0&0&\ddots&0&{\color{blue}{0}}&0&0\\
0&0&0&0&0&\ddots&\blacksquare_{N,N-2}&0&I&0\\
0&0&0&0&0&\cdots&0&\blacksquare_{N+1,N-1}&0&I\\
I&0&0&0&0&\ddots&0&0&I&0\\
0&I&0&0&0&\cdots&0&0&0&I\\
\end{bmatrix}.
\end{equation}
When $N=12, M_x=M_y=8$, the sparsity patterns of the matrices $H$, $\hat{\mathbb{H}}$ and $P_{\rm DE2}$ are depicted in Fig.~\ref{fig3}.

For a practical implementation of $P_{\rm DE2}$, we can utilize the factorization given in Theorem~\ref{th7}, which uses the following notation:
\[\theta_1=\frac{\hat{p}_{1,N-2}}{\hat{p}_{N,N-2}}, ~\theta_2=\frac{\hat{p}_{1,N-1}}{\hat{p}_{N+1,N-1}},
~\theta_3=\frac{\hat{p}_{2,N-2}}{\hat{p}_{N,N-2}},
~\theta_4= -\hat{p}_{2,N},~\theta_t = -\hat{p}_{N,N-2},~\theta_6 = -\hat{p}_{N+1,N-1},\]

\begin{theorem}	\label{th7}
The preconditioner $P_{\rm DE2}$ can be factorized as $P_{\rm DE2}=\hat{\Delta}_1\hat{\Delta}_2\hat{\Delta}_3\hat{\Delta}_4$, where
\[
\hat{\Delta}_1=
\begin{bmatrix}
\theta_2I&\theta_4Q+\theta_{1}I&I+\blacksquare_{1,1}&\blacksquare_{1,2}
&\blacksquare_{1,3}&\cdots
&\blacksquare_{1,N-2}&\blacksquare_{1,N-1}&I+\blacksquare_{1,N}&0\\
0&\theta_4Q+\theta_3I&\blacksquare_{2,1}&I+\blacksquare_{2,2}&\blacksquare_{2,3}&\cdots
&\blacksquare_{2,N-2}&\blacksquare_{2,N-1}&\blacksquare_{2,N}&I\\
0&0&\blacksquare_{3,1}&0&I&\cdots&0&0&0&0\\
\vdots&\vdots&\vdots&\ddots&\ddots&\ddots&\ddots&\vdots&\vdots&\vdots\\
0&0&0&0&\ddots&\ddots&{\color{blue}{0}}&0&0&0\\
0&0&0&0&0&\ddots&0&{\color{blue}{0}}&0&0\\
0&0&0&0&0&\ddots&\blacksquare_{N,N-2}&0&I&0\\
0&0&0&0&0&\cdots&0&\blacksquare_{N+1,N-1}&0&I\\
0&0&0&0&0&\ddots&0&0&I&0\\
0&0&0&0&0&\cdots&0&0&0&I\\
\end{bmatrix},\]
\[\hat{\Delta}_2=\begin{bmatrix}
	0&I&\\
	I&0&\\
	& &I\\
	&&&\ddots &\\
	&  & & &I\\
\end{bmatrix},~\hat{\Delta}_3=\begin{bmatrix}
I&\\
	&  I \\
	&  &\ddots\\
\frac{-Q^{-1}}{\theta_5} &&  & I\\
   & \frac{-Q^{-1}}{\theta_6}  & &&I&\\
    & && & &&I\\
    & && & &&&I\\
\end{bmatrix},~
\hat{\Delta}_4=\begin{bmatrix}
	I&\\
    &I\\
	&   &\ddots &\\
    I&& &  I\\
    &I& &&I\\
\end{bmatrix}.\]
Hence, $P_{\rm DE2}$ is nonsingular if and only if $Q$ and $ \theta_4Q+\theta_3I$ are nonsingular.
 \end{theorem}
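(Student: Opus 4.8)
The proof is by direct computation, and the plan has two parts: first verify that the product $\hat\Delta_1\hat\Delta_2\hat\Delta_3\hat\Delta_4$ reproduces $P_{\rm DE2}$ of~(\ref{12yue29_5x}), and then read off the invertibility criterion from the determinants of the four factors, exactly as was done for Theorem~\ref{th4}.

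For the factorization I would expand the product from the right. The factors $\hat\Delta_3$ and $\hat\Delta_4$ are block elementary matrices (each is the block identity plus one or two off-diagonal blocks), so left-multiplication by them implements elementary block-row operations; consequently $\hat\Delta_3\hat\Delta_4$ is obtained by merely superposing the off-diagonal blocks $-Q^{-1}/\theta_5$, $-Q^{-1}/\theta_6$ of $\hat\Delta_3$ and the two trailing identity blocks of $\hat\Delta_4$, with no cross terms since the rows/columns they modify do not overlap. Left-multiplication by $\hat\Delta_2$ then swaps the first two block-rows of the running product, and left-multiplication by $\hat\Delta_1$ — which is block upper triangular except that its first block-column is $(\theta_2 I,\mathbf 0,\dots,\mathbf 0)^\top$ — produces the remaining entries while leaving block-rows $3,\dots,N+3$ essentially unchanged. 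The only delicate check is that the first two block-rows come out right: there the blocks $\theta_4 Q+\theta_1 I$, $\theta_4 Q+\theta_3 I$ from $\hat\Delta_1$ interact (through the swap) with the blocks $-Q^{-1}/\theta_5$, $-Q^{-1}/\theta_6$ from $\hat\Delta_3$, and the chosen values $\theta_1=\hat p_{1,N-2}/\hat p_{N,N-2}$, $\theta_2=\hat p_{1,N-1}/\hat p_{N+1,N-1}$, $\theta_3=\hat p_{2,N-2}/\hat p_{N,N-2}$, $\theta_4=-\hat p_{2,N}$, $\theta_5=-\hat p_{N,N-2}$, $\theta_6=-\hat p_{N+1,N-1}$ are precisely what make the resulting block sums collapse to $I+\blacksquare_{1,1}$, $\blacksquare_{1,N-1}$, $I+\blacksquare_{1,N}$, $I+\blacksquare_{2,2}$, $\blacksquare_{2,N}$ and the two trailing $I$ blocks in rows $1$ and $2$ of $P_{\rm DE2}$. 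I would confirm these identities entry-by-entry using $\blacksquare_{i,j}=\hat p_{i,j}Q$.

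For nonsingularity, $\det P_{\rm DE2}=\det\hat\Delta_1\cdot\det\hat\Delta_2\cdot\det\hat\Delta_3\cdot\det\hat\Delta_4$; since $\hat\Delta_2$ is a block permutation and $\hat\Delta_3,\hat\Delta_4$ are block triangular with identity diagonal blocks, the last three determinants are nonzero constants independent of $Q$, so $P_{\rm DE2}$ is nonsingular iff $\hat\Delta_1$ is. Because the lower $N+1$ block-rows of the first two block-columns of $\hat\Delta_1$ vanish, a block Laplace expansion yields
\[\det\hat\Delta_1=\det\!\begin{bmatrix}\theta_2 I&\theta_4 Q+\theta_1 I\\[2pt]\mathbf 0&\theta_4 Q+\theta_3 I\end{bmatrix}\det M=\theta_2^{\hat k}\,\det(\theta_4 Q+\theta_3 I)\,\det M,\]
where $M$ is the trailing $(N+1)\times(N+1)$ block of $\hat\Delta_1$. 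A block permutation brings $M$ to block triangular form whose diagonal blocks are either $\hat k$-dimensional identity blocks or blocks $\hat p_{i+2,i}Q$ with $\hat p_{i+2,i}=\tfrac1{i(i+1)}\neq0$, so $\det M$ is a nonzero constant times a power of $\det Q$. Combining, $\det P_{\rm DE2}\neq0$ iff $\det Q\neq0$ and $\det(\theta_4 Q+\theta_3 I)\neq0$, which is the claim (and the denominators $\hat p_{N,N-2},\hat p_{N+1,N-1}$ and $\hat p_{2,N}=-B_N(0)/N$ are nonzero because $N$ is even and $\geq3$, so the $\theta_i$ and $Q^{-1}/\theta_5,Q^{-1}/\theta_6$ are well defined).

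The main obstacle is purely organizational: the factorization is engineered so that the auxiliary rows and columns introduced by the augmentation~(\ref{12yue29_4}) together with the scalars $\theta_1,\dots,\theta_6$ cancel to reproduce exactly the first two block-rows of $P_{\rm DE2}$, and keeping the bookkeeping of all these block products straight — in particular the interactions inside the $2\times(N+3)$ top block — is the only step that is not entirely routine. Once the factorization is established, the nonsingularity statement follows immediately from the block-triangular structure of the factors, as in Theorem~\ref{th4}.
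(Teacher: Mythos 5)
Your proposal is correct and takes essentially the same route as the paper, whose proof of Theorem~\ref{th7} is simply that the factorization "can be verified directly": you carry out that direct block-by-block verification (with the correct identifications $\blacksquare_{i,j}=\hat p_{i,j}Q$, $\hat p_{i,i-2}=\tfrac{1}{(i-2)(i-1)}\neq 0$, $\hat p_{2,N}=-B_N(0)/N\neq 0$ for even $N$) and then read off the nonsingularity criterion from the determinants of the four factors, exactly as the paper intends. The only difference is that you supply the bookkeeping the paper omits, including the block-triangular reduction of $\hat\Delta_1$ that isolates $\det(\theta_4 Q+\theta_3 I)$ and the powers of $\det Q$.
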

\begin{proof}
The assertion can be verified directly. 
\end{proof}

Fig.~\ref{fig4} depicts the eigenvalue distributions of $P_{\rm DE2}^{-1}\hat{\mathbb{H}}$ and $\hat{\mathbb{H}}$, where $\hat{\mathbb{H}}$ comes from Example~4 in Section~\ref{sec:numeri}.
\begin{figure}[ht]
\centering
\includegraphics[width=2.8in,height=2.0in]{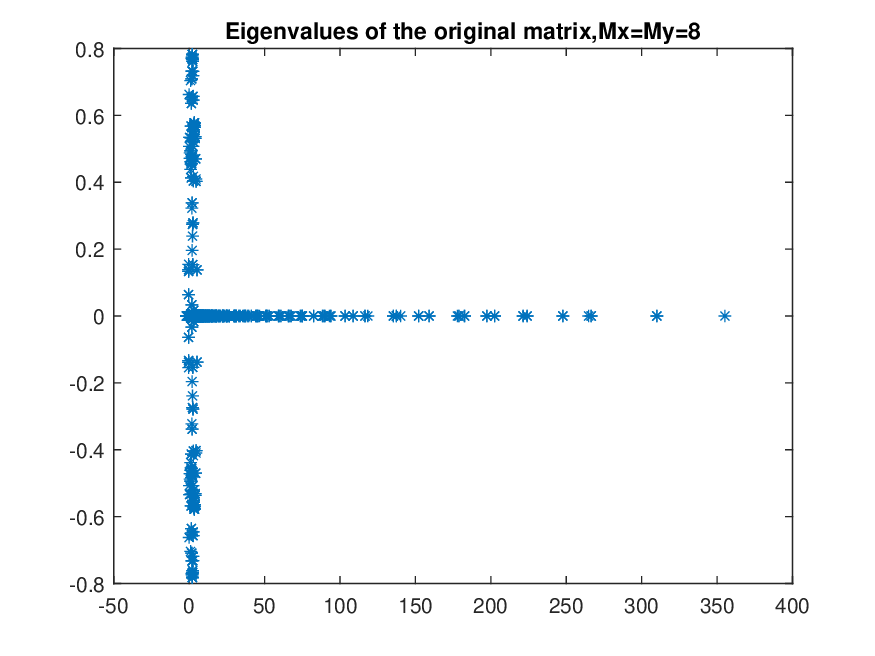}
\includegraphics[width=2.8in,height=2.0in]{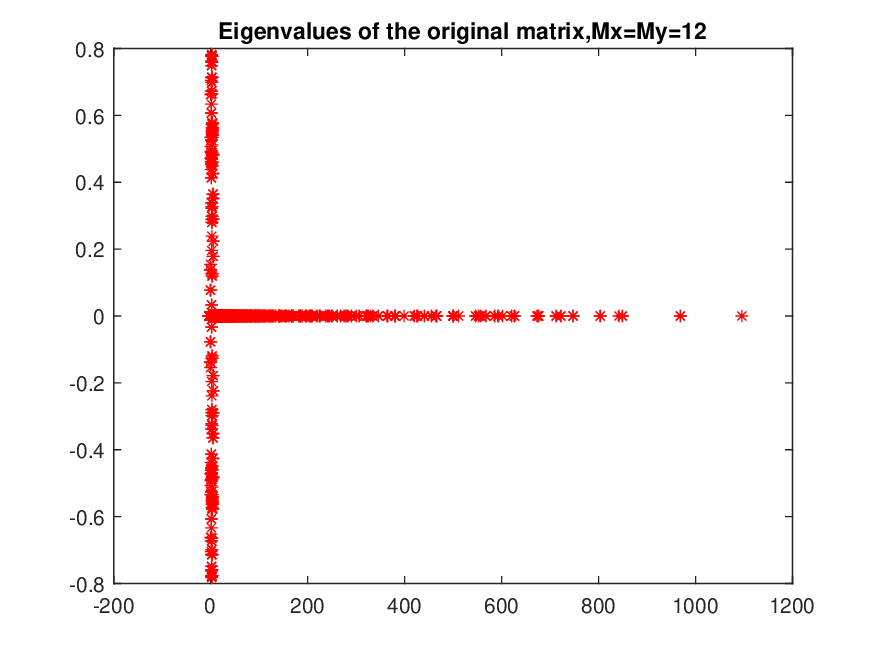}
\includegraphics[width=2.8in,height=2.0in]{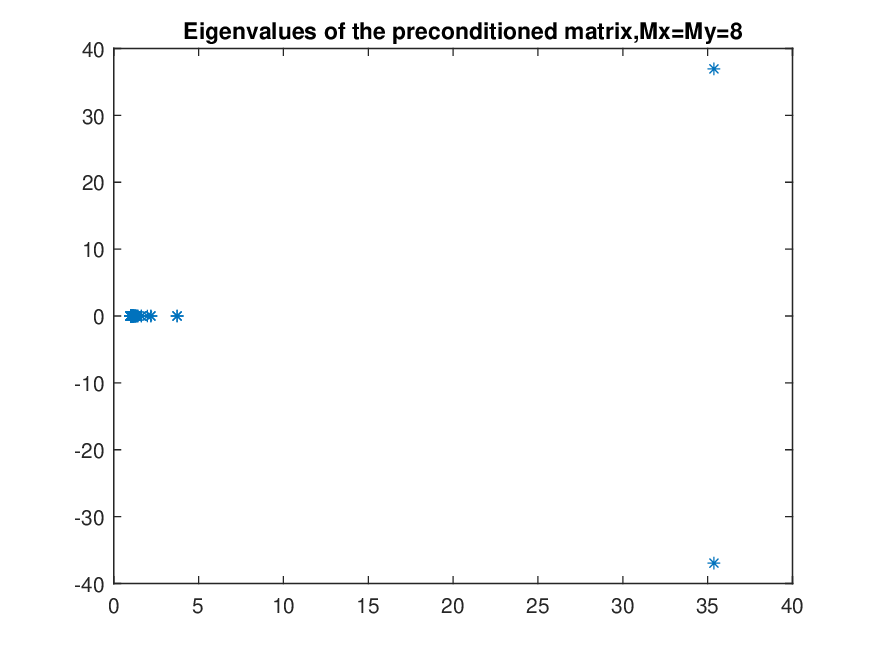}
\includegraphics[width=2.8in,height=2.0in]{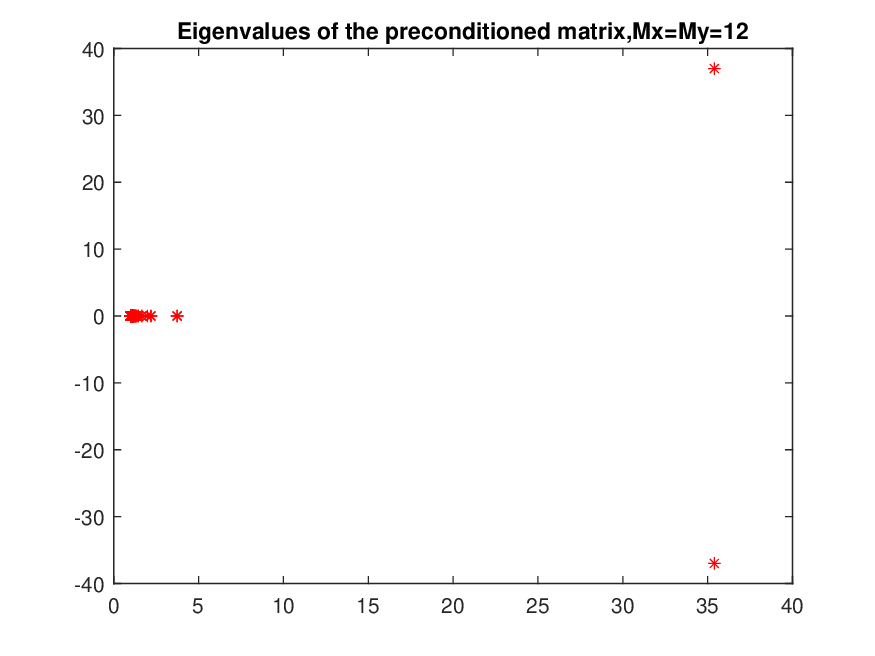}
\caption{Eigenvalue distributions of the preconditioned matrix $P_{\rm DE2}^{-1}\hat{\mathbb{H}}$ and original matrix $\hat{\mathbb{H}}$
in Example 4 under $8\times 8$ and $12\times 12$ uniform grids in space, respectively.}
\label{fig4}
\end{figure}
\subsection{Case 3: $\beta_1\neq 0$ and $\beta_2\neq 0$}
\label{sec:precon3x}
In this case, the coefficient matrix $H$ in~Eq. (\ref{10yue5_5}) enjoys the following structure
\begin{equation}\label{1yue19_3}
H=
\begin{bmatrix}
\blacklozenge_{1,1}&\blacklozenge_{1,2}&\blacklozenge_{1,3}&\cdots&\blacklozenge_{1,N-1}
&\blacklozenge_{1,N}&0\\
\beta_2I+\blacksquare_{2,1}&\beta_1I+\blacksquare_{2,2}&\blacksquare_{2,3}
&\cdots&\blacksquare_{2,N-1}&\blacksquare_{2,N}&0\\
\blacksquare_{3,1}&\frac{\beta_2I}{2}&\beta_1I&\cdots&0&0&0\\
0&\blacksquare_{4,2}&\frac{\beta_2I}{3}&\ddots&0&0&0\\
\vdots&\vdots&\ddots&\ddots&\ddots&\vdots&\vdots\\
0&0&0&\ddots&\frac{\beta_2I}{N-1}&\beta_1I&0\\
0&0&0&\cdots&\blacksquare_{N+1,N-1}&\frac{\beta_2I}{N}&\beta_1I\\
\end{bmatrix},
\end{equation}
where $\blacklozenge_{1,1}=v_{1}I+\blacksquare_{1,1}, ~\blacklozenge_{1,2}=v_{2}I+\blacksquare_{1,2},
~\cdots,~\blacklozenge_{1,N}=v_{N}I+\blacksquare_{1,N}$, with
$v_1=\beta_1-\beta_2B_1(0),~v_2=-\frac{\beta_2B_2(0)}{2},~\cdots,
~v_N=-\frac{\beta_2B_N(0)}{N}$.

\begin{figure}[ht]
\centering
\includegraphics[width=2.05in,height=1.8in]{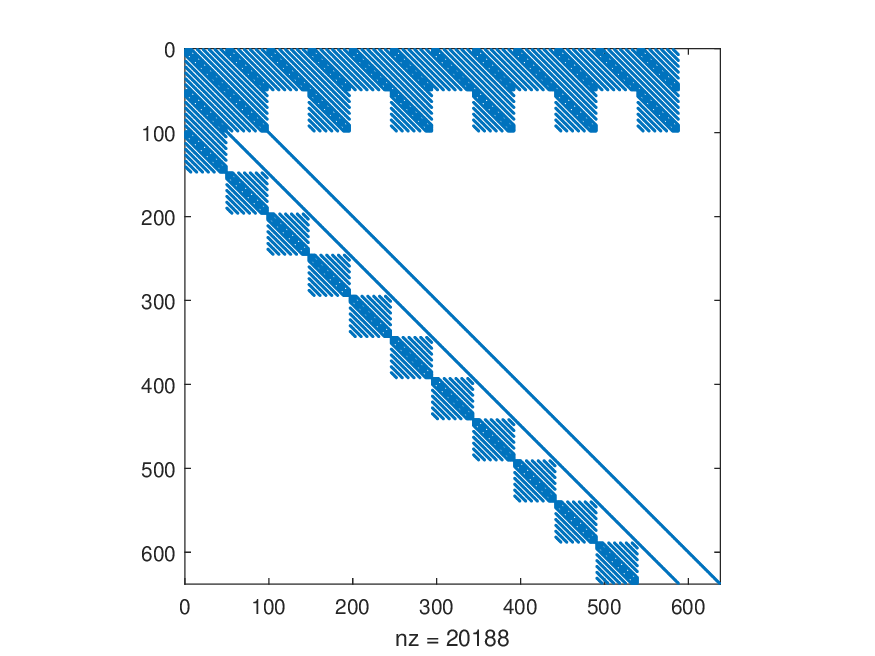}
\includegraphics[width=2.05in,height=1.8in]{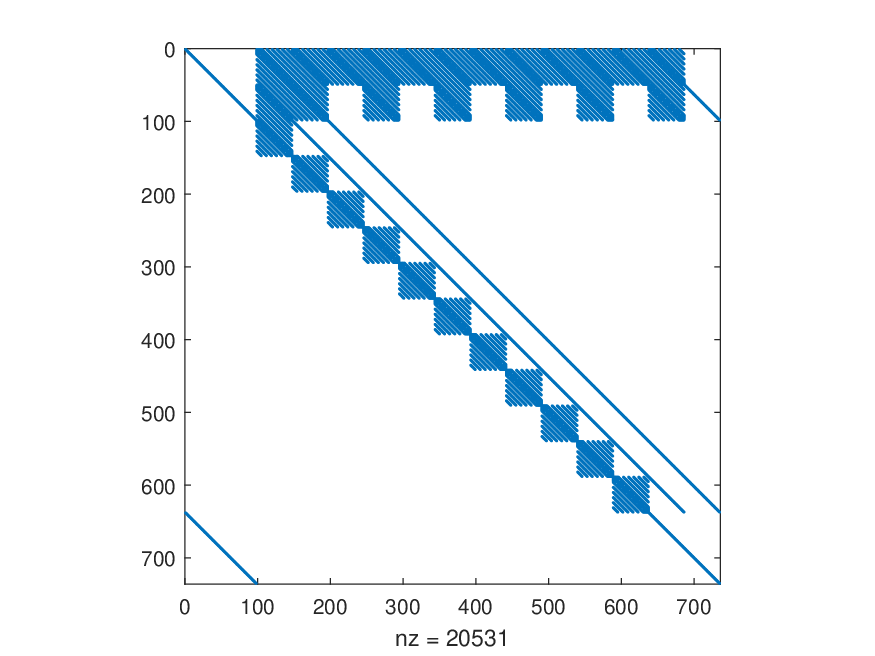}
\includegraphics[width=2.05in,height=1.8in]{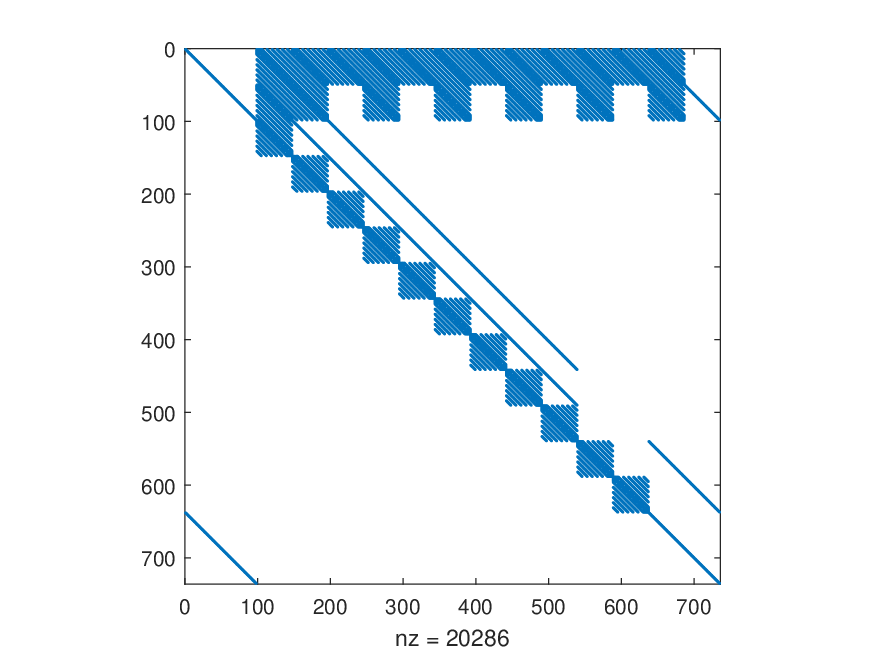}
\caption{Sparsity pattern plots of the coefficient matrix $H$ ($\beta_1\neq0,~\beta_2\neq0$) in (\ref{1yue19_3}) (left), the augmented matrix $\tilde{\mathbb{H}}$ (middle) and the preconditioner $P_{\rm DE3}$ (right).}
	\label{fig5}
\end{figure}

Similar to Section \ref{sec:precon1}, we first augment (\ref{10yue5_5}) into
\begin{equation}\label{1yue20_1}
\mathbb{\tilde{H}}\hat{\mathbb{U}}=\hat{\mathbb{R}},
\end{equation}
where
\[\tilde{\mathbb{H}}=\begin{bmatrix}
I&\mathbf{0}&\blacklozenge_{1,1}&\blacklozenge_{1,2}&\blacklozenge_{1,3}
&\cdots&\blacklozenge_{1,N-2}&\blacklozenge_{1,N-1}&I+\blacklozenge_{1,N}&\mathbf{0}\\
\mathbf{0}&I&\beta_2I+\blacksquare_{2,1}&\beta_1I+\blacksquare_{2,2}&\blacksquare_{2,3}
&\cdots&\blacksquare_{2,N-2}&\blacksquare_{2,N-1}&\blacksquare_{2,N}&I\\
\mathbf{0}&\mathbf{0}&\blacksquare_{3,1}&\frac{\beta_2I}{2}&\beta_1I&\cdots&\mathbf{0}&
\mathbf{0}&\mathbf{0}&\mathbf{0}\\
\vdots&\vdots&\vdots&\ddots&\ddots&\ddots&\vdots&\vdots&\vdots&\vdots\\
\mathbf{0}&\mathbf{0}&\mathbf{0}&\mathbf{0}&\ddots&\ddots&{\color{blue}{\beta_1I}}&
\mathbf{0}&\mathbf{0}&\mathbf{0}\\
\mathbf{0}&\mathbf{0}&\mathbf{0}&\mathbf{0}&\mathbf{0}&\ddots
&{\color{blue}{\frac{\beta_2I}{N-2}}}
&{\color{blue}{\beta_1I}}&\mathbf{0}&\mathbf{0}\\
\mathbf{0}&\mathbf{0}&\mathbf{0}&\mathbf{0}&\mathbf{0}&\ddots&\blacksquare_{N,N-2}
&{\color{blue}{\frac{\beta_2I}{N-1}}}&\beta_1 I&\mathbf{0}\\
\mathbf{0}&\mathbf{0}&\mathbf{0}&\mathbf{0}&\mathbf{0}&\cdots&0&\blacksquare_{N+1,N-1}
&{\color{blue}{\frac{\beta_2I}{N}}}&\beta_1I\\
I&\mathbf{0}&\mathbf{0}&\mathbf{0}&\mathbf{0}&\ddots&\mathbf{0}&\mathbf{0}&I&\mathbf{0}\\
\mathbf{0}&I&\mathbf{0}&\mathbf{0}&\mathbf{0}&\cdots&\mathbf{0}&\mathbf{0}&\mathbf{0}&I\\
\end{bmatrix}.\]
Here we should note that the five blue blocks in $\tilde{\mathbb{H}}$ will be replaced with five blue \textbf{0} blocks in the $P_{\rm DE3}$ preconditioner given below:
\begin{equation}\label{12yue29_5}
P_{\rm DE3}=
\begin{bmatrix}
I&\mathbf{0}&\blacklozenge_{1,1}&\blacklozenge_{1,2}&\blacklozenge_{1,3}
&\cdots&\blacklozenge_{1,N-2}&\blacklozenge_{1,N-1}&I+\blacklozenge_{1,N}&\mathbf{0}\\
\mathbf{0}&I&\beta_2I+\blacksquare_{2,1}&\beta_1I+\blacksquare_{2,2}&\blacksquare_{2,3}
&\cdots&\blacksquare_{2,N-2}&\blacksquare_{2,N-1}&\blacksquare_{2,N}&I\\
\mathbf{0}&\mathbf{0}&\blacksquare_{3,1}&\frac{\beta_2I}{2}&\beta_1I&\cdots&\mathbf{0}&
\mathbf{0}&\mathbf{0}&\mathbf{0}\\
\vdots&\vdots&\vdots&\ddots&\ddots&\ddots&\vdots&\vdots&\vdots&\vdots\\
\mathbf{0}&\mathbf{0}&\mathbf{0}&\mathbf{0}&\ddots&\ddots&{\color{blue}{\mathbf{0}}}&
\mathbf{0}&\mathbf{0}&\mathbf{0}\\
\mathbf{0}&\mathbf{0}&\mathbf{0}&\mathbf{0}&\mathbf{0}&\ddots
&{\color{blue}{\mathbf{0}}}
&{\color{blue}{\mathbf{0}}}&\mathbf{0}&\mathbf{0}\\
\mathbf{0}&\mathbf{0}&\mathbf{0}&\mathbf{0}&\mathbf{0}&\ddots&\blacksquare_{N,N-2}
&{\color{blue}{\mathbf{0}}}&\beta_1 I&\mathbf{0}\\
\mathbf{0}&\mathbf{0}&\mathbf{0}&\mathbf{0}&\mathbf{0}&\cdots&0&\blacksquare_{N+1,N-1}
&{\color{blue}{\mathbf{0}}}&\beta_1I\\
I&\mathbf{0}&\mathbf{0}&\mathbf{0}&\mathbf{0}&\ddots&\mathbf{0}&\mathbf{0}&I&\mathbf{0}\\
\mathbf{0}&I&\mathbf{0}&\mathbf{0}&\mathbf{0}&\cdots&\mathbf{0}&\mathbf{0}&\mathbf{0}&I\\
\end{bmatrix}.
\end{equation}
Fig.~\ref{fig5} depicts the sparsity patterns of the matrices $H$, $\tilde{\mathbb{H}}$ and $P_{\rm DE3}$ under $N=12, M_x=M_y=8$. Similarly, $P_{\rm DE3}$ can be represented in factorized form according to Theorem~\ref{th8} below.

\begin{theorem}	\label{th8}
The preconditioner $P_{\rm DE3}$ can be factorized as $P_{\rm DE3}=\tilde{\Delta}_1\tilde{\Delta}_2\tilde{\Delta}_3\tilde{\Delta}_4\tilde{\Delta}_5$, where
\[
\tilde{\Delta}_1=
\begin{bmatrix}
	\tilde{\Theta}_1&\tilde{\Theta}_2&\blacklozenge_{1,1}&\blacklozenge_{1,2}&\blacklozenge_{1,3}
	&\cdots&\blacklozenge_{1,N-2}&\blacklozenge_{1,N-1}&I+\blacklozenge_{1,N}&\mathbf{0}\\
	\mathbf{0}&\tilde{\Theta}_3&\beta_2I+\blacksquare_{2,1}&\beta_1I+\blacksquare_{2,2}&\blacksquare_{2,3}
	&\cdots&\blacksquare_{2,N-2}&\blacksquare_{2,N-1}&\blacksquare_{2,N}&I\\
	\mathbf{0}&\mathbf{0}&\blacksquare_{3,1}&\frac{\beta_2I}{2}&\beta_1I&\cdots&\mathbf{0}&
	\mathbf{0}&\mathbf{0}&\mathbf{0}\\
	\vdots&\vdots&\vdots&\ddots&\ddots&\ddots&\vdots&\vdots&\vdots&\vdots\\
	\mathbf{0}&\mathbf{0}&\mathbf{0}&\mathbf{0}&\ddots&\ddots&{\color{blue}{\mathbf{0}}}&
	\mathbf{0}&\mathbf{0}&\mathbf{0}\\
	\mathbf{0}&\mathbf{0}&\mathbf{0}&\mathbf{0}&\mathbf{0}&\ddots
	&{\color{blue}{\mathbf{0}}}
	&{\color{blue}{\mathbf{0}}}&\mathbf{0}&\mathbf{0}\\
	\mathbf{0}&\mathbf{0}&\mathbf{0}&\mathbf{0}&\mathbf{0}&\ddots&\blacksquare_{N,N-2}
	&{\color{blue}{\mathbf{0}}}&\beta_1 I&\mathbf{0}\\
	\mathbf{0}&\mathbf{0}&\mathbf{0}&\mathbf{0}&\mathbf{0}&\cdots&0&\blacksquare_{N+1,N-1}
	&{\color{blue}{\mathbf{0}}}&\beta_1I\\
	\mathbf{0}&\mathbf{0}&\mathbf{0}&\mathbf{0}&\mathbf{0}&\ddots&\mathbf{0}&\mathbf{0}&I&\mathbf{0}\\
	\mathbf{0}&\mathbf{0}&\mathbf{0}&\mathbf{0}&\mathbf{0}&\cdots&\mathbf{0}&\mathbf{0}&\mathbf{0}&I\\
\end{bmatrix},
\]
\[\tilde{\Delta}_2=\begin{bmatrix}
	\blacksquare^{-1}_{N+1,N-1}&&\\
	&I&\\
    && \ddots &\\
    &  & & I\\
\end{bmatrix},
~~\tilde{\Delta}_3=\begin{bmatrix}
	0&I&\\
	I&0&\\
    & &I\\
    &&&\ddots &\\
    &  & & &I\\
\end{bmatrix},\]
\[\tilde{\Delta}_4=\begin{bmatrix}
I&\\
	&  I \\
	&  &\ddots\\
-\beta_1\blacksquare^{-1}_{N,N-2} &&  & I\\
   & -\beta_1\blacksquare^{-1}_{N+1,N-1} & &&I&\\
    & && & &&I\\
    & && & &&&I\\
\end{bmatrix}
~{\rm and}~\tilde{\Delta}_5=\begin{bmatrix}
	I&\\
    &I\\
	&   &\ddots&\\
    I&& &  I\\
    &I& &&I\\
\end{bmatrix}\]
with $\tilde{\Theta}_1=\beta_1(\blacksquare_{1,N-1}+v_{N-1}I)$,
$\tilde{\Theta}_2=-\blacksquare_{1,N}
+v_{N-2}\beta_1\blacksquare^{-1}_{N,N-2}
+(\frac{\beta_1\hat{p}_{1,N-2}}{\hat{p}_{N,N-2}}-v_N)I$, $\tilde{\Theta}_3=-\blacksquare_{2,N}+\frac{\beta_1\hat{p}_{2,N-2}}{\hat{p}_{N,N-2}}I$,
and hence, $P_{\rm DE3}$ is nonsingular if and only if $Q, \tilde{\Theta}_1, \tilde{\Theta}_3$ are nonsingular.
 \end{theorem}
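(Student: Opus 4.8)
The plan is to verify the factorization by a direct block computation and then to read off the nonsingularity criterion by taking determinants. Throughout I use the $(N+3)\times(N+3)$ block partition of $P_{\rm DE3}$ into $\hat k$-sized blocks, in which the first two block coordinates carry the auxiliary unknowns $-\mathbf{U}^{N-1},-\mathbf{U}^{N}$ and the last two (closure) block rows merely encode $-\mathbf{U}^{N-1}+\mathbf{U}^{N-1}=\mathbf{0}$ and $-\mathbf{U}^{N}+\mathbf{U}^{N}=\mathbf{0}$. The four rightmost factors are elementary: $\tilde\Delta_5$ and $\tilde\Delta_4$ are block unit lower-triangular (the identity plus two off-diagonal blocks each), $\tilde\Delta_3$ is the block permutation interchanging coordinates $1$ and $2$, and $\tilde\Delta_2=\mathrm{diag}(\blacksquare_{N+1,N-1}^{-1},I,\dots,I)$ rescales a single block row. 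I would therefore prove the equivalent statement
\[
P_{\rm DE3}\,\tilde\Delta_5^{-1}\,\tilde\Delta_4^{-1}\,\tilde\Delta_3^{-1}\,\tilde\Delta_2^{-1}=\tilde\Delta_1,
\]
by performing the four right-multiplications one at a time: $\tilde\Delta_5^{-1}$ subtracts block columns $N+2$ and $N+3$ from block columns $1$ and $2$; $\tilde\Delta_4^{-1}$ then carries out two more block column operations with multipliers $\beta_1\blacksquare_{N,N-2}^{-1}$ and $\beta_1\blacksquare_{N+1,N-1}^{-1}$; $\tilde\Delta_3^{-1}$ swaps the first two block columns; and $\tilde\Delta_2^{-1}$ rescales the resulting first block column by $\blacksquare_{N+1,N-1}$. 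At each stage I read off the updated blocks and compare with the target.

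These manipulations collapse because every block $\blacksquare_{i,j}=\hat p_{i,j}Q$ is a scalar multiple of the single matrix $Q$ (the $\hat p_{i,j}$ being the entries of $P^2$, with all subsubdiagonal entries $\hat p_{i,i-2}$ nonzero). Hence all the $\blacksquare_{i,j}$ commute, and mixed products reduce to scalar multiples of the identity, for instance
\[
\blacksquare_{1,N-2}\blacksquare_{N,N-2}^{-1}=\tfrac{\hat p_{1,N-2}}{\hat p_{N,N-2}}I,\qquad \blacksquare_{2,N-2}\blacksquare_{N,N-2}^{-1}=\tfrac{\hat p_{2,N-2}}{\hat p_{N,N-2}}I,\qquad \beta_1\blacksquare_{N,N-2}\blacksquare_{N,N-2}^{-1}=\beta_1 I .
\]
These are exactly the identities that produce the stated expressions $\tilde\Theta_1=\beta_1(\blacksquare_{1,N-1}+v_{N-1}I)$, $\tilde\Theta_2=-\blacksquare_{1,N}+v_{N-2}\beta_1\blacksquare_{N,N-2}^{-1}+\big(\tfrac{\beta_1\hat p_{1,N-2}}{\hat p_{N,N-2}}-v_N\big)I$ and $\tilde\Theta_3=-\blacksquare_{2,N}+\tfrac{\beta_1\hat p_{2,N-2}}{\hat p_{N,N-2}}I$ in the first two block columns of $\tilde\Delta_1$, while leaving the five blue $\mathbf{0}$ blocks (common to $P_{\rm DE3}$ and $\tilde\Delta_1$) untouched. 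I expect the main obstacle to be precisely the sign-and-index bookkeeping in these first two block rows and columns: one must track how the scalar shifts $v_k$ appearing in $\blacklozenge_{1,k}=v_kI+\blacksquare_{1,k}$ combine with the column operations so that, after all four multiplications, the $(1,1)$, $(1,2)$, $(2,1)$, $(2,2)$ blocks come out to $I,\mathbf{0},\mathbf{0},I$ and block columns $N+2,N+3$ are restored to $I+\blacklozenge_{1,N}$ and $I$; the remaining block rows simply ride along the already (block-)triangular structure and are routine.

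For the nonsingularity claim I would take determinants across the five factors. Here $\det\tilde\Delta_4=\det\tilde\Delta_5=1$ (block unit triangular), $|\det\tilde\Delta_3|=1$ (permutation), and $\det\tilde\Delta_2=\det\blacksquare_{N+1,N-1}^{-1}=(\hat p_{N+1,N-1})^{-\hat k}(\det Q)^{-1}$, which is nonzero precisely when $Q$ is nonsingular since $\hat p_{N+1,N-1}\neq 0$. In $\tilde\Delta_1$ the first two block columns are supported only in the first two block rows, so $\det\tilde\Delta_1=\det\tilde\Theta_1\cdot\det\tilde\Theta_3\cdot\det M$, where $M$ is the trailing $(N+1)\times(N+1)$ block of $\tilde\Delta_1$; a quick inspection shows $M$ is block upper triangular with diagonal blocks $\blacksquare_{3,1},\blacksquare_{4,2},\dots,\blacksquare_{N+1,N-1},I,I$, so $\det M=\big(\prod_{i=3}^{N+1}\hat p_{i,i-2}\big)^{\hat k}(\det Q)^{N-1}$. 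Multiplying these contributions gives $\det P_{\rm DE3}=c\,\det\tilde\Theta_1\,\det\tilde\Theta_3\,(\det Q)^{N-2}$ with $c\neq 0$; since both sides are polynomials in the entries of $Q$, this identity holds unconditionally, and hence $P_{\rm DE3}$ is nonsingular if and only if $Q$, $\tilde\Theta_1$ and $\tilde\Theta_3$ are all nonsingular, as claimed.
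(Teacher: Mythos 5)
Your overall strategy (peel off the elementary factors by block column operations and then read the nonsingularity off the determinants) is the natural way to ``verify directly,'' which is all the paper does; the determinant bookkeeping at the end, including the density argument extending the identity to singular $Q$, is sound \emph{given} the factorization. The problem is that you defer precisely the computation on which the theorem stands, and the cancellation you assert there does not follow from the generic identities you list. Carry out your own steps on block column $2$: subtracting column $N{+}3$ leaves $-\beta_1 I$ in block row $N{+}1$; adding column $N{+}1$ times $\beta_1\blacksquare^{-1}_{N+1,N-1}$ kills that entry but deposits $\blacksquare_{2,N-1}\beta_1\blacksquare^{-1}_{N+1,N-1}=\tfrac{\beta_1\hat p_{2,N-1}}{\hat p_{N+1,N-1}}I$ in block row $2$ (coming from the entry $\blacksquare_{2,N-1}$ of row $2$, column $N{+}1$ of $P_{\rm DE3}$); after the swap and the final rescaling by $\blacksquare_{N+1,N-1}$ this becomes $\beta_1\blacksquare_{2,N-1}$ sitting in the $(2,1)$ block, whereas $\tilde\Delta_1$ has $\mathbf{0}$ there. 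Nothing in your list of commutation identities removes this term, and under your own modelling assumption that rows $1$ and $2$ of $P^2$ are ``full'' it is genuinely nonzero, so your outlined verification does not close (and your determinant formula, which uses that the first two block columns of $\tilde\Delta_1$ are block upper triangular, inherits the same gap).

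The missing ingredient is a non-generic property of $P^2$: since row $2$ of $P$ is $e_1^{\top}$, one has $\hat p_{2,j}=-B_j(0)/j$, and because $N$ is taken even in Section 4, $N-1\ge 3$ is odd, so $B_{N-1}(0)=0$ and hence $\hat p_{2,N-1}=0$, i.e.\ $\blacksquare_{2,N-1}=\mathbf{0}$. Only with this vanishing odd-index Bernoulli number does the $(2,1)$ (equivalently $(2,2)$) block come out as stated; every other block does cancel generically, exactly as you describe, and with this fact added both your factorization check and your determinant argument (with exponent $N-2\ge 1$, so $N\ge 4$) go through. So the gap is concrete: you must either prove $\hat p_{2,N-1}=0$ from the Bernoulli structure or show how the leftover $\beta_1\blacksquare_{2,N-1}$ is otherwise absorbed; as written, the proposal would leave you stuck at, or wrongly rejecting, the stated form of $\tilde\Delta_1$.
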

\begin{proof} The assertion can be verified directly.  
\end{proof}

Fig.~\ref{fig6} depicts the eigenvalue distributions of $P_{\rm DE3}^{-1}\tilde{\mathbb{H}}$ and  $\tilde{\mathbb{H}}$, where $\tilde{\mathbb{H}}$ is derived from Example~5 in the next section.
\begin{figure}[ht]
\centering
\includegraphics[width=2.8in,height=2.0in]{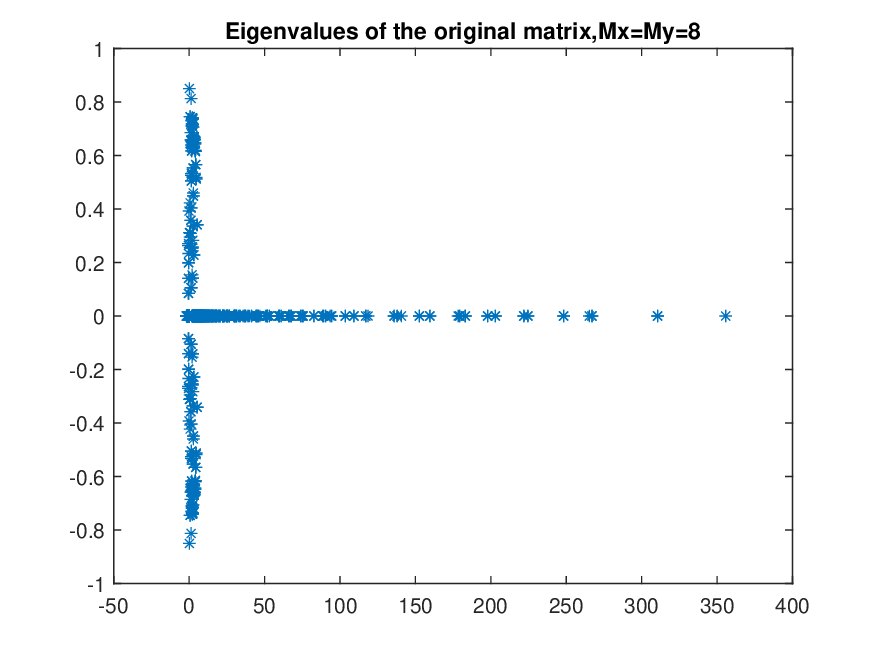}
\includegraphics[width=2.8in,height=2.0in]{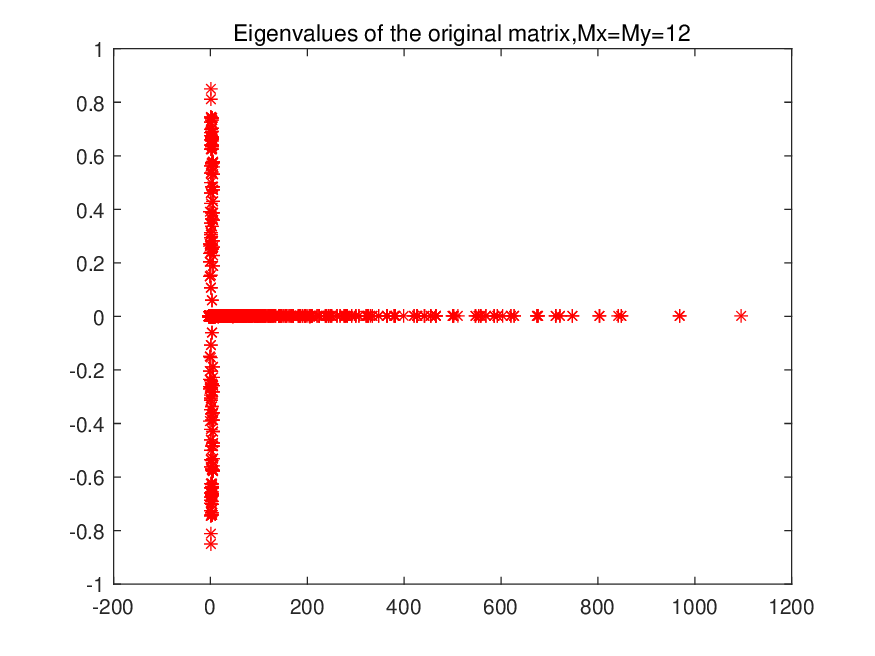}
\includegraphics[width=2.8in,height=2.0in]{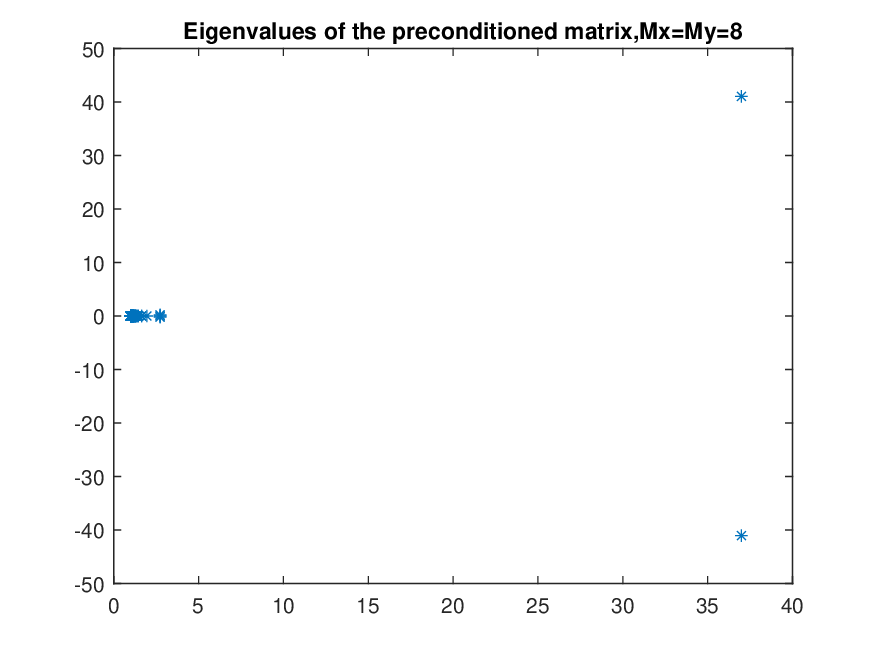}
\includegraphics[width=2.8in,height=2.0in]{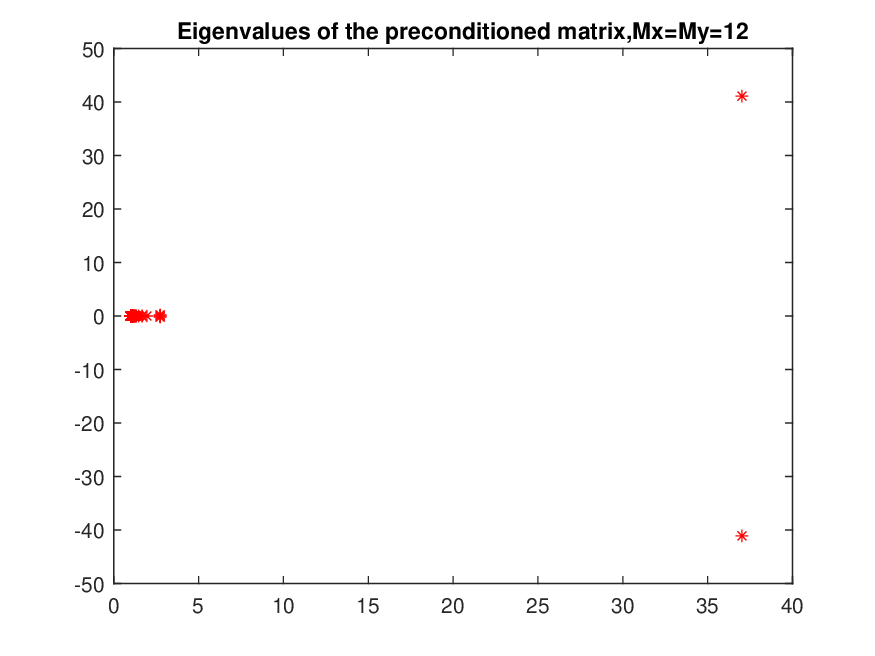}
\caption{Eigenvalue distributions of the preconditioned matrix $P_{\rm DE3}^{-1}\tilde{\mathbb{H}}$ and original matrix $\tilde{\mathbb{H}}$
in Example 5 under $8\times 8$ and $12\times 12$ uniform grids in space, respectively.}
	\label{fig6}
\end{figure}
\begin{remark}\label{rem4.7x}
When $P_{\rm DE1}$, $P_{\rm DE2}$ and $P_{\rm DE3}$ are used as preconditioners for a Krylov subspace method, the computational cost of their construction and application can be estimated using Theorems \ref{th4}, \ref{th7} and \ref{th8}, as described below:


\begin{enumerate}
\item Implementing $P_{\rm DE1}$ requires to compute the LU factorization of the submatrix $Q$ in (\ref{10yue5_5}). Applying $P_{\rm DE1}$ at each Krylov iteration requires to solve $N+1$ subsystems $Qv=r$ per iteration utilizing the computed $L$, $U$ factors. The overall cost in terms of number of arithmetic operations is  $\mathcal{O}\left(\hat{k}^3+(N+1)\hat{k}^2\right)$;
\item Implementing $P_{\rm DE2}$ requires to compute the LU factorizations of the submatrix $Q$ in~(\ref{10yue5_5}) and of matrix  $\theta_4Q+\theta_3I$ in Theorem~\ref{th7}. Applying $P_{\rm DE2}$ at each Krylov iteration requires to solve $N+1$ subsystems $Qv=r$ and one subsystem $(\theta_4Q+\theta_3I)v=r$ using the computed $L,~U$ factors. The overall cost is $\mathcal{O}\left(2\hat{k}^3+(N+2)\hat{k}^2\right)$;
\item Finally, implementing $P_{\rm DE3}$ requires to compute the LU factorizations of the submatrix $Q$ in~(\ref{10yue5_5}) and of matrices $\tilde{\Theta}_1,~\tilde{\Theta}_3$ in Theorem~\ref{th8}. Applying $P_{\rm DE3}$ at each Krylov iteration requires to solve $N+2$ subsystems $Qv=r$ and two subsystems $\tilde{\Theta}_1v=r,~\tilde{\Theta}_3v=r$ using the computed $L$, $U$ factors. The overall cost is $\mathcal{O}\left(3\hat{k}^3+(N+4)\hat{k}^2\right)$ operations.
\end{enumerate} 
In contrast, the direct solution $U=H\backslash R$ of system~(\ref{10yue5_5}) has a significantly higher $\mathcal{O}\left((N+1)^3\hat{k}^3\right)$
computational cost compared to the implementation of $P_{\rm DE1}$, $P_{\rm DE2}$, or $P_{\rm DE3}$.
\end{remark}
\section{Numerical results}
\label{sec:numeri}
In this section, we evaluate the performance of the presented {\em B-brm-c} method on some numerical examples using MATLAB R2016(a) on a PC powered by an Intel(R) Core(TM) i5-8265U processor (CPU@1.60GHz). We compare this performance to that of the conventional barycentric rational collocation method (referred to as {\em cbrc}), which approximates the unknown solution function by using $\hat{u}(t,x,y)=\sum^{\hat{M},\tilde{M},\overline{M}}_{k=0,i=0,j=0}
\varphi_k(t)\varphi_i(x)\varphi_j(y)u_{k,i,j}$ in Eq.~(\ref{9yue26_2}).
This {\em cbrc} method has been extensively applied to the numerical solution of numerous PDEs,  for example, see \cite{luo1,liufei1,lijin1}. The coefficient matrix of the relevant linear system for the {\em cbrc} method cannot be readily transformed into one with block-structured form, which limits the application of fast matrix solvers.
Because the theoretical accuracy in the $x$-direction is the same as in the $y$-direction, we always use $h_x=h_y,~\tilde{d}=5$ when utilizing the {\em B-brm-c} and {\em cbrc} methods.

In our experiments we solve the linear systems $\mathbb{H}\mathbb{U}=\mathbb{R}$ in Eq.~(\ref{10yue30_3}), $\mathbb{\hat{H}}\hat{\mathbb{U}}=\hat{\mathbb{R}}$ in Eq.~(\ref{12yue29_4}) and $\mathbb{\tilde{H}}\hat{\mathbb{U}}=\hat{\mathbb{R}}$ in Eq.~(\ref{1yue20_1}) iteratively using the GMRES method~\cite{Saad}
which is preconditioned from the right by the
$P_{\rm DE1}$, $P_{\rm DE2}$  and $P_{\rm DE3}$ preconditioners (described in Section~\ref{sec:precon}). In GMRES, the stopping tolerance is ${\tt tol} = 10^{-10}$, and the dimension of the Krylov subspace is set to ${\tt restart} = 30$. Before executing the GMRES method, we compute the LU factorization of submatrix $Q$ in~(\ref{10yue30_3}), ~(\ref{12yue29_4}) and (\ref{1yue20_1}), and $\theta_4Q+\theta_3I$ in Theorem \ref{th7}, and $\tilde{\Theta}_1, \tilde{\Theta}_3$ in Theorem \ref{th8}. The overall solution time reported in our experiments for solving the linear system includes the CPU elapsed time required to complete these matrix factorizations. Given the dense structure of the coefficient matrix $\mathbf{B}$ in the linear system of {\em cbrc}, we use a dense direct method, namely we solve `$u=\mathbf{B}\backslash r$' in MATLAB. Meanwhile, in order to assess the efficiency of $P_{\rm DE1}$,~$P_{\rm DE2}$,~$P_{\rm DE3}$, we solve the linear system~(\ref{10yue5_5}) directly in MATLAB via `$U=H\backslash R$'.

In the tables shown in this section, we use the following notation: `iter' represents the cost of GMRES iterations preconditioned by the $P_{\rm DE1}$, $P_{\rm DE2}$ or $P_{\rm DE3}$ method\footnote{We also try to use the unpreconditioned GMRES for the resulting linear system (\ref{1yue20_1}), but it fails to be convergent and thus the corresponding results are not listed.},
`time' is the CPU elapsed time required for solving the linear system in our {\em B-brm-c} method (either using GMRES preconditioned by $P_{\rm DE1},~P_{\rm DE2}, P_{\rm DE3}$, or using a dense direct method $U=H\backslash R$) and in the {\em cbrc} method (using a direct solver $u=\mathbf{B}\backslash r$), `err' denotes the infinite norm of the absolute errors between the exact and numerical solutions, and finally `order' represents the practical convergence order, which is computed as $log(E_{\frac{M}{2}}/E_{M})/log(2)$.

\textbf{Example 1.} $u'(t)+u(t)=f(t), ~t\in(0,1]$ with the initial value $u(0)=1$, and exact solution $u(t)=e^t$. The purpose of providing this Example 1 is only to verify experimentally the theoretical convergence accuracy of $O\left((2\pi)^{-N}\right)$ proved in Theorem~\ref{th1}.

The results are reported in Table~\ref{tab1}, from which we can see that the error bound of the pure {\em Bm} approach is $O\left((2\pi)^{-N}\right)$, which agrees with the result of Theorem~\ref{th1}.

\begin{table}[!htp]
	\centering
	\caption{Numerical experiments on Example~1.}
	\begin{tabular}{ccccccccc}
		\hline
&\multicolumn{2}{c}{{\em Bm}} \\
		\cline{2-3} \\[-3pt]
		$N$  &\text{err}& $(2\pi)^{-N}$ \\
		\hline
		6& 8.8564e-6&1.6253e-5 \\
		8&2.3609e-7&4.1168e-7 \\
		10&5.9633e-9&1.0428e-8\\
		12&1.5150e-10&2.6414e-10 \\
		\hline
	\end{tabular}
	\label{tab1}
\end{table}

\textbf{Example 2. (Heat equation)} $\frac{\partial u}{\partial t}- \Delta u=f(x,y,t), ~t\in(0,1], ~\Omega=(0,1)\times (0,1)$, with initial value $u(0,x,y)=e^{x+y}$, and exact solution $u(x,y,t)=e^{x+y+t}$.

\textbf{Example 3. (Advection-diffusion equation with variable coefficients)} $\frac{\partial u}{\partial t}+(xy,\sin x\cos y)\cdot \nabla u-e^{x+y}\Delta u=f(x,y,t), ~t\in(0,1],~\Omega=(0,1)\times (0,1),$ with initial value $u(0,x,y)=e^{x+y}$, and exact solution $u(x,y,t)=e^{x+y+t}$. Here $\nabla,~\Delta$ express the gradient operator and Laplacian, respectively.
\begin{table}[!htp]\small
	\centering
	\caption{Numerical experiments on Example~2 ($N=12$).}
	\begin{tabular}{ccccccccccccc}
		\hline
&\multicolumn{4}{c}{{\em B-brm-c}} & &\multicolumn{3}{c}{{\em B-brm-c}} &&
\multicolumn{2}{c}{{\em cbrc}}\\
&\multicolumn{4}{c}{(using $P_{\rm DE1}$ and GMRES)} & &\multicolumn{3}{c}{(using $H\backslash R$)} && \multicolumn{2}{c}{(using $\mathbf{B}\backslash r$)}\\
		\cline{2-5} \cline{7-9}\cline{11-12}\\[-3pt]
		$h$ &\text{iter} &\text{time(s)} &\text{err}&\text{order} & &\text{time(s)}
		&\text{err}&\text{order}&  &\text{time(s)} &\text{err}\\
		\hline
	1/6&5&0.0046&1.1687e-4&--&&0.0031 &1.1687e-4&--&&0.0045&1.1810e-4\\
	1/12&4&0.0048&4.6889e-6& 4.6395&&0.0243&4.6890e-6&4.6395 && 0.0856& 5.1645e-6\\
	1/24&3&0.0225&1.6422e-7&4.8355 && 0.7288&1.6422e-7&4.8356 && 3.0622&2.9508e-6\\
	1/48&3&0.4092& 5.2713e-9&4.9613 &&27.9032 &5.2942e-9&4.9551 &&266.1696&2.9574e-6\\
		\hline
	\end{tabular}
	\label{tab2}
\end{table}

\begin{table}[!htp]
	\centering
	\caption{Numerical experiments on Example~2 ($h=1/50$).}
	\begin{tabular}{cccccccccccccc}
		\hline
&\multicolumn{3}{c}{{\em B-brm-c}} & &\multicolumn{2}{c}{{\em B-brm-c}} &&
\multicolumn{2}{c}{{\em cbrc}}\\
&\multicolumn{3}{c}{(using $P_{\rm DE1}$ and GMRES)} & &\multicolumn{2}{c}{(using $H\backslash R$)} && \multicolumn{2}{c}{(using $\mathbf{B}\backslash r$)}\\
		\cline{2-4} \cline{6-7}\cline{9-10}\\[-3pt]
		$N$ &\text{iter} &\text{time(s)} &\text{err}  & &\text{time(s)}
		&\text{err} &  &\text{time(s)} &\text{err} \\
		\hline
		6&5&0.5351&5.8414e-5&&10.4763 & 5.8414e-5&&23.0434&2.4991e-5\\
		8&5&0.5553&1.5617e-6&& 17.0410&1.5617e-6&&77.9061&1.0303e-5\\
		10&3&0.4795&4.3458e-8&& 26.2615 &4.3456e-8&&221.2240&5.2053e-6\\
		12&3& 0.4984& 4.3525e-9&&36.1127 &4.3710e-9&&493.0315& 2.9579e-6\\
		\hline
	\end{tabular}
	\label{tab3}
\end{table}

\begin{table}[!htp]
	\centering
	\caption{Numerical experiments on Example~3 ($N=12$).}
		\begin{tabular}{ccccccccccc}
		\hline
&\multicolumn{4}{c}{{\em B-brm-c}  } & &\multicolumn{3}{c}{{\em B-brm-c}} \\
&\multicolumn{4}{c}{(using $P_{\rm DE1}$ and GMRES)} & &\multicolumn{3}{c}{(using $H\backslash R$)} \\
		\cline{2-5} \cline{7-9}\\[-3pt]
		$h$ &\text{iter} &\text{time(s)} &\text{err}&\text{order} & &\text{time(s)}
		&\text{err}&\text{order} \\
		\hline
		1/6&4&0.0040&  1.1481e-4&--&&0.0174 &   1.1481e-4&--\\
		1/12&3&0.0043& 4.6539e-6&4.6247 & &0.0294&4.6539e-6& 4.6247\\
		1/24&3&0.0448& 1.6233e-7&4.8414 & &  0.7126& 1.6235e-7&4.8413 \\
		1/48&3&0.5916& 5.4265e-9&4.9028 & &28.4933& 5.4384e-9&4.8998 \\
		\hline
	\end{tabular}
	\label{tab4}
\end{table}

\begin{table}[!htp]
	\centering
	\caption{Numerical experiments on Example~3 ($h=1/50$).}
		\begin{tabular}{cccccccccccc}
		\hline
&\multicolumn{3}{c}{{\em B-brm-c}  } & &\multicolumn{2}{c}{{\em B-brm-c}} \\
&\multicolumn{3}{c}{(using $P_{\rm DE1}$ and GMRES)} & &\multicolumn{2}{c}{(using $H\backslash R$)}\\
		\cline{2-4} \cline{6-7}\\[-3pt]
		$N$ &\text{iter} &\text{time(s)} &\text{err}  & &\text{time(s)}
		&\text{err}  \\
		\hline
		6&4&0.7292 &1.1583e-4&&10.4150&1.1583e-4\\
		8&4&0.7730 &3.1666e-6&&17.7299&3.1666e-6\\
		10&3&0.6558& 8.7396e-8&&25.7434&8.7404e-8\\
		12&3& 0.6968&4.8150e-9&&36.8348&4.8340e-9\\
		\hline
	\end{tabular}
	\label{tab5}
\end{table}

%
%
%
%
%
%
%

The results are reported in Tables~\ref{tab2}--\ref{tab5}, from which we can draw the following conclusions.
\begin{enumerate}
\item From Table~\ref{tab2} and Table~\ref{tab4}, it is evident that the {\em B-brm-c} method is efficient for solving Eq.~(\ref{9yue26_2}) as $\beta_1=0,~\beta_2=1$. It can attain a high convergence order $O(h^{d})$ at the mesh nodes in space.
\item The results in the last column of
	Tables~\ref{tab2}-\ref{tab3} reveal that the {\em cbrc} method has difficulty attaining the same level of accuracy of the {\em B-brm-c} when the same mesh size is used both in time and space. In fact, as shown in Table~\ref{tab2}, the refinement of the mesh does not significantly affect the accuracy of {\em cbrc}. This can be attributed to the coarse time grid used ($N=12$).
\item In terms of CPU time, the {\em cbrc} approach is more expensive than the {\em B-brm-c} method to achieve the same error levels, as shown in Table~\ref{tab3} (the case $N=6$). Additionally, using the GMRES method with the preconditioner $P_{\rm DE1}$ in the {\em B-brm-c} method can save a significant amount of CPU time when compared to a sparse direct solver. This is illustrated in Tables~\ref{tab2}--\ref{tab5}.
\item The results presented in Tables~\ref{tab2}--\ref{tab5} indicate that the preconditioner $P_{\rm DE1}$ is numerically robust and effective to control the number of iterations, as the involved number of iterations almost does not suffer the discretized grid sizes.
\end{enumerate}


\textbf{Example 4. (Wave equation)} $\frac{\partial^2 u}{\partial t^2}-\Delta u=f(x,y,t),~t\in(0,1],~\Omega=(0,1)\times (0,1)$, with initial value $u(0,x,y)=e^{x+y}, \frac{\partial u(0,x,y)}{\partial t}=e^{x+y}$, and exact solution $u(x,y,t)=e^{x+y+t}$.

\begin{table}[!htp]
	\centering
	\caption{Numerical experiments on Example~4 ($N=12$).}
		\begin{tabular}{ccccccccccc}
		\hline
&\multicolumn{4}{c}{{\em B-brm-c}} & &\multicolumn{3}{c}{{\em B-brm-c}} \\
&\multicolumn{4}{c}{(using $P_{\rm DE2}$ and GMRES)} & &\multicolumn{3}{c}{(using $H\backslash R$)} \\
		\cline{2-5} \cline{7-9}\\[-3pt]
		$h$ &\text{iter} &\text{time(s)} &\text{err}&\text{order} & &\text{time(s)}
		&\text{err}&\text{order} \\
		\hline
		1/6&12 & 0.0109 & 1.1652e-4   &-- && 0.0020  & 1.1652e-4  & --\\
        1/12&11 &0.0186 & 4.6856e-6  &4.6362 &&  0.0253  &4.6856e-6   &4.6362 \\
        1/24& 10& 0.0964 & 1.6420e-7  &4.8347 && 0.7924 &1.6420e-7   &4.8347 \\
        1/48& 10&1.2028 &5.2673e-9  &4.9622 &&  31.3442 & 5.2778e-9  &4.9594 \\
		\hline
	\end{tabular}
	\label{tab6}
\end{table}

\begin{table}[!htp]
	\centering
	\caption{Numerical experiments on Example~4 ($h=1/50$).}
		\begin{tabular}{cccccccccccc}
		\hline
&\multicolumn{3}{c}{{\em B-brm-c}} & &\multicolumn{2}{c}{{\em B-brm-c}} \\
&\multicolumn{3}{c}{(using $P_{\rm DE2}$ and GMRES)} & &\multicolumn{2}{c}{(using $H\backslash R$)}\\
		\cline{2-4} \cline{6-7}\\[-3pt]
		$N$ &\text{iter} &\text{time(s)} &\text{err}  & &\text{time(s)}
		&\text{err}  \\
		\hline
		6&14&1.3715& 5.8383e-5 &&11.5031&5.8383e-5\\
        8&13&1.4387&  1.5620e-6&&19.3687&1.5620e-6\\
        10&11& 1.4277&4.3471e-8&&29.1609&4.3474e-8\\
        12&10&1.4308&4.3539e-9&&40.3707&4.3587e-9\\
		\hline
	\end{tabular}
	\label{tab7}
\end{table}

The results of our experiments,  reported in Tables~\ref{tab6}-\ref{tab7}, confirm the conclusions presented in Example~3.
Based on the results shown in Table~\ref{tab6}, it is evident that the {\em B-brm-c} method remains effective for solving Eq.~(\ref{9yue26_2}) when  $\beta_1=1,~\beta_2=0$, and a high convergence order $O(h^{d})$ can also be expected at the mesh nodes in space.
The preconditioner $P_{\rm DE2}$ is robust and numerically scalable, as the number of iterations required to converge is nearly independent of the grid sizes, as shown in Tables~\ref{tab6}-\ref{tab7}. It is evident, when comparing $P_{\rm DE2}$ to $P_{\rm DE1}$, that $P_{\rm DE2}$ requires more time for the same mesh step used. This can be explained by the larger number of iterations involved and by the need for an additional LU factorization of $\theta_4Q+\theta_3I$ when solving the subsystem $\hat{\Delta}_1^{-1}r$ in $P_{\rm DE2}$ according to Theorem~\ref{th7}, as mentioned in Remark~\ref{rem4.7x}.

\textbf{Example 5. (Telegraph equations)} $\frac{\partial^2 u}{\partial t^2}+2\frac{\partial u}{\partial t}-\Delta u+u=f(x,y,t),~t\in(0,1],~\Omega=(0,1)\times (0,1)$, with initial values $u(0,x,y)=e^{x+y}, \frac{\partial u(0,x,y)}{\partial t}=e^{x+y}$, and exact solution $u(x,y,t)=e^{x+y+t}$.

\begin{table}[!htp]
	\centering
	\caption{Numerical experiments on Example~5 ($N=12$).}
		\begin{tabular}{ccccccccccc}
		\hline
&\multicolumn{4}{c}{{\em B-brm-c}} & &\multicolumn{3}{c}{{\em B-brm-c}} \\
&\multicolumn{4}{c}{(using $P_{\rm DE3}$ and GMRES)} & &\multicolumn{3}{c}{(using $H\backslash R$)} \\
		\cline{2-5} \cline{7-9}\\[-3pt]
		$h$ &\text{iter} &\text{time(s)} &\text{err}&\text{order} & &\text{time(s)}
		&\text{err}&\text{order} \\
		\hline
		1/6 & 13 & 0.0246&  1.1124e-4   &-- && 0.0018    &  1.1124e-4  & --\\
        1/12& 12 & 0.0469&  4.6308e-6   &4.5863  &&    0.0341&   4.6308e-6 & 4.5863  \\
        1/24& 11 & 0.1459&  1.6414e-7   &4.8183   &&   1.3311 &  1.6406e-7  &4.8190  \\
        1/48& 11 & 2.0707&  5.7728e-9   &4.8295  &&    78.4841&  5.7490e-9   & 4.8348 \\
		\hline
	\end{tabular}
	\label{tab8}
\end{table}

\begin{table}[!htp]
	\centering
	\caption{Numerical experiments on Example~5 ($h=1/50$).}
		\begin{tabular}{cccccccccccc}
		\hline
&\multicolumn{3}{c}{{\em B-brm-c}} & &\multicolumn{2}{c}{{\em B-brm-c}} \\
&\multicolumn{3}{c}{(using $P_{\rm DE3}$ and GMRES)} & &\multicolumn{2}{c}{(using $H\backslash R$)}\\
		\cline{2-4} \cline{6-7}\\[-3pt]
		$N$ &\text{iter} &\text{time(s)} &\text{err}  & &\text{time(s)}
		&\text{err}  \\
		\hline
		6&16&  2.2355& 6.3320e-5  &&18.1926 & 6.3320e-5\\
        8& 14&2.3216 & 1.5633e-6  &&34.3308 &1.5633e-6 \\
        10&13& 2.2473 &4.4679e-8 &&48.3658 &4.4683e-8 \\
        12&11& 2.2332&4.8183e-9 && 72.9181&4.7658e-9 \\
		\hline
	\end{tabular}
	\label{tab9}
\end{table}

The results are presented in Tables~\ref{tab8}-\ref{tab9}, and analogous conclusions can be deduced from them as in the case of $P_{\rm DE2}$. Table~\ref{tab8} shows that the {\em B-brm-c} method is efficient for Eq.~ (\ref{9yue26_2}) when  $\beta_1\neq 0,~\beta_2\neq 0$, and a high convergence order $O(h^{d})$ at the mesh nodes in space may also be attained. The results presented in Tables~\ref{tab8}-\ref{tab9} indicate that the preconditioner $P_{\rm DE3}$ is effective to control the number of GMRES iterations and is numerically scalable, as we can observe that the number of iterations is nearly independent of the discretized grid sizes. When comparing $P_{\rm DE3}$ to $P_{\rm DE1}$, we can see that $P_{\rm DE3}$ costs more time for the same mesh step used. As before, this can be explained by the larger number of iterations involved and the need for three LU factorizations of $Q, ~\tilde{\Theta}_1,~\tilde{\Theta}_3$ when solving the subsystem $\tilde{\Delta}_1^{-1}r$ in $P_{\rm DE3}$ according to Theorem~\ref{th8}, as mentioned in Remark~\ref{rem4.7x}.

\section{Conclusions}
\label{sec:conclusions}
This paper investigates a new matrix collocation method that combines BRIs and BPs for evolutionary PDEs (\ref{9yue26_2}). The numerical results indicate that this approach can significantly enhance both CPU time efficiency and approximating accuracy when compared to the more conventional {\em crbc} method. Then, in order to efficiently solve the resulting system of linear equations, we introduced three dimension expanded preconditioners denoted as $P_{\rm DE1},~P_{\rm DE2},~P_{\rm DE3}$ that take advantage of the structural properties of the coefficient matrix $H$ in~(\ref{10yue5_5}). The accuracy and CPU time efficiency of the presented {\em B-brm-c} method have been established through both numerical examples and theoretical analyses. Our experiments show that the proposed preconditioners $P_{\rm DE1},~P_{\rm DE2},~P_{\rm DE3}$ are numerically robust and effective to control the number of iterations.

Our future work for this {\em B-brm-c} method includes its extension to the solution of nonlinear evolutionary PDEs with non-smooth initial-boundary conditions. Additionally, we aim to investigate the associated preconditioning techniques that are applicable to this problem.


\section*{Conflict of interest}
The authors declare that they have no conflict of interest.

\end{document}